\theoremstyle{plain}
\newtheorem{thm}{Theorem}[section]
\newtheorem{lem}[thm]{Lemma}
\newtheorem{obs}[thm]{Observation}
\newtheorem{prop}[thm]{Proposition}
\newtheorem{cor}[thm]{Corollary}
\newtheorem{claim}[thm]{Claim}
\theoremstyle{definition}
\newtheorem{defn}[thm]{Definition}
\newtheorem*{exmp*}{Example}
\newtheorem*{thm_bundle}{Theorem \ref{thm:structure}}
\theoremstyle{remark}
\newtheorem*{rem}{Remark}
\newcommand{\nc}{\newcommand}
\nc{\dmo}{\DeclareMathOperator}
\DeclareMathOperator{\PConf}{PConf}\DeclareMathOperator{\Ker}{Ker}
\DeclareMathOperator{\Homeo}{Homeo}
\DeclareMathOperator{\Stab}{Stab}
\DeclareMathOperator{\Diff}{Diff}
\DeclareMathOperator{\Conf}{Conf}
\DeclareMathOperator{\Fix}{Fix}
\DeclareMathOperator{\SL}{SL}
\DeclareMathOperator{\SO}{SO}
\DeclareMathOperator{\DD}{\mathbb{D}}
\DeclareMathOperator{\Int}{Int}
\DeclareMathOperator{\Ext}{Ext}
\DeclareMathOperator{\res}{Res}
\DeclareMathOperator{\di}{dim}
\DeclareMathOperator{\codi}{codim}
\newcommand{\R}{\mathbb{R}}
\newcommand{\Z}{\mathbb{Z}}
\nc{\para}[1]{\medskip\noindent\textbf{#1.}}
\title{\boldmath Structure theorems for actions of homeomorphism groups}
\address{\newline Department of Mathematics   \newline University of Maryland   \newline College Park, MD, 20742,  USA \newline
Department of Mathematics \newline  Cornell University  \newline Ithaca, NY 14853, USA}
\email{chenlei@umd.edu, k.mann@cornell.edu}
\author{Lei Chen, Kathryn Mann}
\begin{document}

\maketitle

\begin{abstract} 
\setlength{\baselineskip}{1.1em}
We give general classification and structure theorems for actions of groups of homeomorphisms and diffeomorphisms on manifolds, reminiscent of classical results for actions of (locally) compact groups.  This gives a negative answer to Ghys' ``extension problem" for diffeomorphisms of manifolds with boundary, as well as a classification of all homomorphisms $\Homeo_0(M) \to \Homeo_0(N)$ when $\dim(M) = \dim(N)$ (and related results for diffeomorphisms), and a complete classification of actions of $\Homeo_0(S^1)$ on surfaces.  This resolves many problems in a program initiated by Ghys, and gives definitive answers to conjectures of Militon and Hurtado and a question of Rubin.
\end{abstract}

\section{Introduction}
Let $M$ be an oriented, closed manifold.  It is a basic problem to understand the actions of $\Homeo_0(M)$ and $\Diff^r_0(M)$ (the identity components of the group of homeomorphisms or $C^r$ diffeomorphisms of $M$, respectively) on manifolds and other spaces.    This is the analog of {\em representation theory} for these large transformation groups; and our work gives structure theorems and rigidity results towards a classification of all possible actions.  

Natural examples of continuous actions of $\Homeo_0(M)$ and $\Diff^r_0(M)$ on other spaces are induced by modifications of $M$: taking products with other manifolds, considering configuration spaces of points on $M$, taking lifts to covers, and also passing to some fiber bundles. For example, $\Diff^1_0(M)$ acts naturally on the tangent bundle of $M$.
Understanding to what extent these examples form an exhaustive list is a long-standing, basic question.  This article gives a complete (and positive) answer to several precise formulations of this question, including those appearing in \cite{Ghys, Hurtado, Militon, Rubin}.  For example, among other results, we prove the following: 

\begin{thm} \label{thm:transitive}
If $M$ is a connected manifold and $\Homeo_0(M)$ acts transitively on a finite-dimensional connected manifold or CW complex $N$, then $N$ is homeomorphic to a cover of the configuration space $\Conf_n(M)$ of $n$ points  in $M$, and the action on $N$ is induced from the natural action of $\Homeo_0(M)$ on $\Conf_n(M)$. 
\end{thm}

\begin{thm} \label{thm:structure_diff} 
Suppose $M$ is a connected, closed, smooth manifold and $N$ is a connected manifold with $\dim(N) < 2\dim(M)$.  If there exists a nontrivial continuous homomorphism $\rho: \Diff^r_0(M)\to \Diff^s_0(N)$, then the action $\rho$ on $N$ is fixed point free and $N$ is a topological fiber bundle over $M$.  
\end{thm}

Before stating our general results, we motivate this work by listing specific instances of the ``basic question'' advertised above, all of which we eventually answer.  

\subsection{History and motivating questions} \label{sec:history}
Understanding actions of homeomorphism groups and diffeomorphism groups on manifolds has a long history.  
Rubin \cite{Rubin}, asked generally for topological spaces $X$ and $Y$ if there were ``any reasonable assumptions" under which ``the embeddability of $\Homeo(X)$ in $\Homeo(Y)$ will imply that $X$ is some kind of continuous image of $Y$."  Implicit in work of Whittaker \cite{Whittaker} and Filipkiewicz \cite{Filipkiewicz} in the 60s and 80s on automorphisms of $\Homeo_0(M)$ and $\Diff^r_0(M)$, and isomorphisms among such groups, 
is the problem to classify the {\em endomorphisms} of these groups.   

More recent and more specific instances of this question include Ghys' work \cite{Ghys}, where he asks whether the existence of an injective homomorphism $\Diff_0^\infty(M) \to \Diff_0^\infty(N)$ implies that $\dim(M) \leq \dim(N)$.  (This was answered positively by Hurtado in \cite{Hurtado}, but those techniques do not apply to the corresponding problem for groups of homeomorphisms, or of diffeomorphisms of class $C^r$, $r < \infty$.)   Ghys also asked in which cases groups of diffeomorphisms of a manifold with boundary admit {\em extensions to the interior}: homomorphisms $\Diff_0^\infty(\partial M) \to \Diff_0^\infty(M)$ giving a group-theoretic section to the natural ``restrict to boundary" map $\Diff_0^\infty(M) \to \Diff_0^\infty(\partial M)$.  The main result of \cite{Ghys} is the non-existence of such when $M$ is a ball.  

Hurtado \cite[$\S$ 6.1]{Hurtado} asked whether all homomorphisms $\Diff_0^\infty(M) \to \Diff_0^\infty(N)$ (for general $M$, $N$) might be ``built from  pieces" coming from natural bundles over configuration spaces of unordered points on $M$.   Both his and Ghys' questions are equally interesting for homeomorphism groups as well.  
Specific to the homeomorphism case, Militon \cite{Militon} classified the actions of $\Homeo_0(S^1)$ on the torus and closed annulus, and asked whether an analogous result would hold when the target is the open annulus, disc, or 2-sphere.  In the same work, he stated the conjecture that, for a compact manifold $M$, every nontrivial group morphism $\Homeo_0(M) \to \Homeo_0(M)$ is given by conjugating by a homeomorphism.

\subsection{Results} 
We answer all of the questions stated in the subsection above, including a precise formulation of Rubin's question (where ``reasonable assumptions'' in our case are that $X$ and $Y$ are manifolds, and either involve bounds on the dimension $Y$ in terms of that of $X$, or that the action on $Y$ is transitive).   We answer Hurtado's and Ghys' questions in both the Homeo and Diffeo case, and show that Militon's conjecture is false in general (in particular, it fails for all manifolds of negative curvature), but it fails for only one reason, and we can describe all manifolds $M$ for which it does hold.  We also complete Militon's classification of actions of $\Homeo_0(S^1)$ on surfaces, and separately classify all actions of $\Homeo_0(S^n)$ on the $(n+1)$-ball, which is surprisingly different in the case $n=1$ and $n>1$.   
We also discuss actions where $M$ is noncompact, in which case the relevant groups to study are $\Homeo_c(M)$ and $\Diff^r_c(M)$, those homeomorphisms that are {\em compactly supported and isotopic to identity through a compactly supported isotopy}.    Note these groups agree with $\Homeo_0(M)$ and $\Diff^r_0(M)$, respectively, when $M$ is compact.  All manifolds are assumed boundaryless, unless stated otherwise.  

The first step in each of these answers is the following general structure theorem for orbits.   

\begin{thm}[Orbit Classification Theorem] \label{thm:orbit_classification}
Let $M$ be a connected topological manifold.  For any action of $\Homeo_c(M)$ on a finite-dimensional CW complex, every orbit is either a point or the continuous injective image of a cover of a configuration space $\Conf_n(M)$ for some $n$.
If $M$ has a $C^r$ structure, then for any weakly continuous action of $\Diff^r_c(M)$ on a finite-dimensional CW complex by homeomorphisms, every orbit is either a point, or a continuous injective image of a cover of the $r$-jet bundle over $\Conf_n(M)$ under a fiberwise quotient by a subgroup of the extended jet group.  
\end{thm}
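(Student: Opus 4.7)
The plan is to attach to each point $x$ of the target a canonical subset $S(x) \subset M$, its \emph{support}, show $S(x)$ is finite, and then use fragmentation to factor the orbit through a cover of $\Conf_n(M)$ where $n = |S(x)|$. Define $S(x) = M \setminus U_x$, where $U_x$ is the union of all open $V \subset M$ with $\Homeo_c(V) \subset \Stab(x)$. By construction $S(x)$ is closed and $\Homeo_c(M)$-equivariant: $S(g \cdot x) = g(S(x))$ for every $g$.

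The \textbf{main obstacle} is showing $|S(x)| \le \dim(X)$, in particular that $S(x)$ is finite. The idea I would pursue: given $k$ distinct points $p_1, \dots, p_k \in S(x)$, pick pairwise disjoint open balls $V_i \ni p_i$ and, by the definition of $S(x)$, elements $g_i \in \Homeo_c(V_i)$ with $g_i \cdot x \neq x$. The disjoint supports make the $g_i$ pairwise commuting, and path-connectedness of each $\Homeo_c(V_i)$ gives isotopies $\{g_i^t\}_{t \in [0,1]}$ from the identity to $g_i$ that still pairwise commute. These assemble into a continuous map
\[
[0,1]^k \to X, \qquad (t_1, \dots, t_k) \mapsto g_1^{t_1} \cdots g_k^{t_k} \cdot x.
\]
The hard point is to arrange for this map to be locally injective, forcing $\dim(X) \ge k$; the delicate issue is ruling out hidden collapses in which the motion produced by $g_i$ is absorbed by the motions in other coordinates. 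I would address this by iteratively refining the choice of $g_i$: whenever the joint orbit map drops dimension, one extracts from $\Homeo_c(V_i)$ a new element that acts ``linearly independently'' of the others on $x$, using commutativity and the disjoint-support hypothesis.

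Once $|S(x)| = n$ is finite, the rest is cleaner. By fragmentation, every $g \in \Homeo_c(M)$ supported off $S(x)$ is a product of elements each supported in a small open set disjoint from $S(x)$, so lies in $\Stab(x)$ by the definition of $S(x)$. The equivariant assignment $g \cdot x \mapsto g(S(x))$ then defines a surjection $\Phi : G \cdot x \twoheadrightarrow \Conf_n(M)$. Its fiber over $S(x)$ is $K \cdot x$, where $K$ is the setwise stabilizer of $S(x)$; the identity component $K_0$ fixes $S(x)$ pointwise, and in the topological category I expect a path-connectedness and homogeneity argument (combined with the fragmentation above) to show $K_0 \cdot x = \{x\}$, so the fiber is finite. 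Combined with a local section of $\Homeo_c(M) \to \Conf_n(M)$, this upgrades $\Phi$ to a covering map onto $\Conf_n(M)$; the orbit $G \cdot x$ is then the continuous injective image of this cover.

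For $\Diff^r_c$ the definitions of $S(x)$ and the finiteness argument transfer verbatim, since they only use path-connectedness and fragmentation of $\Diff^r_c(V)$. The difference arises in the fiber analysis: a $C^r$-diffeomorphism fixing a point $p \in S(x)$ pointwise can nontrivially move $x$ via its $r$-jet at $p$. I would show this action factors through the finite-dimensional jet group at each point of $S(x)$ and that its kernel is a linear group, using a linearization in the spirit of Bochner--Montgomery / Sternberg adapted to the $C^r$ setting. The resulting fiber is a quotient of the jet fiber by this linear group, which globalizes to the fiberwise linear quotient of the $r$-jet bundle over $\Conf_n(M)$ described in the statement.
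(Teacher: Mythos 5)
The support set $S(x)$ is a clean reformulation of the finite set $X$ that the paper identifies via the sandwich $\Stab(X)_0 \subset \Stab(x) \subset \Stab(X)$ (one can check $S(x)=X$), but the finiteness step you flag as the ``main obstacle'' is a genuine gap, and the ``iterative refinement'' sketch does not close it. One should first work with one-parameter \emph{subgroups} rather than arbitrary isotopies, so that the image of $(t_1,\dots,t_k)\mapsto g_1^{t_1}\cdots g_k^{t_k}$ is a copy of $\R^k$ in $\Homeo_c(M)$ and failure of injectivity of the orbit map is governed by the closed subgroup $\R^k\cap\Stab(x)$. Even then, the condition $g_i \cdot x\ne x$ only excludes the coordinate axes from $\R^k\cap\Stab(x)$; it does not rule out a skew line such as $\{g_1^t g_2^{-t}\cdots : t\in\R\}$ lying inside $\Stab(x)$, which would collapse your map onto something lower-dimensional. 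The paper's Lemma~\ref{lem:nontransitive} replaces ``$g_i$ moves $x$'' with \emph{nontransitivity of $\Stab(x)$ on $M$}: choose pairs $(x_i,y_i)$ in disjoint balls with $y_i\notin\Stab(x)\cdot x_i$ and flows with $f_i^1(y_i)=x_i$. Then \emph{every} tuple $(a_1,\dots,a_k)$ with some $a_i=1$ sends $y_i$ to $x_i$ (disjoint supports) and hence cannot lie in $\Stab(x)$; this excludes every nonzero line, forcing $\R^k\cap\Stab(x)$ to be discrete and the quotient genuinely $k$-dimensional. That is the idea you are missing, and the same argument simultaneously bounds the number of such pairs and identifies the finite invariant set as the complement of the big $\Stab(x)$-orbit in $M$.

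Beyond this, you assume without comment that $\Stab(x)$ is closed and the orbit map is continuous; for $\Homeo_c(M)$ this requires the automatic continuity theorem of Mann and Rosendal--Solecki, and for noncompact $M$ an additional compact-exhaustion argument (done in the paper) to pass from weak continuity to the global statement, while for $\Diff^r_c(M)$ continuity must be hypothesized. You also need a version of invariance of domain for finite-dimensional CW complexes (Lemma~\ref{Invariance} in the paper) to convert ``image contains a $k$-disc'' into $\dim X\ge k$. Finally, the fiber of your map to $\Conf_n(M)$ is discrete but not finite in general (for hyperbolic surfaces the deck group $\pi_0(\Stab(X))$ is infinite), and the crucial containment $K_0\subset\Stab(x)$ rests on the nontrivial fact $\overline{\Homeo_c(M\setminus X)}=\Stab(X)_0$, which comes from Edwards--Kirby local contractibility rather than a soft ``homogeneity'' argument.
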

See Section \ref{sec:small_quotient} for the definition of extended jet groups.  
Using the continuity result in \cite[Theorem 1.2]{Hurtado}, this theorem gives an immediate positive answer to a precise formulation of Hurtado's question \cite[\S 6.1]{Hurtado}, namely, all actions of $\Diff_c^\infty(M)$ on another manifold are always built from pieces (orbits) that are natural bundles over covers of configuration spaces.  

As mentioned above, $\Conf_n(M)$ denotes the configuration space of $n$ distinct, unlabeled points in $M$, but if $M = S^1$ or $M = \R$, we mean the configuration space of unlabeled points {\em together with a cyclic or linear order}, since $\Homeo_c(M)$ does not act transitively on the space of unordered points.  
Since $\Conf_n(M)$, and the $k$-jet bundles (for $k\leq r$) are manifolds on which $\Homeo_c(M)$ and $\Diff_c^r(M)$ act transitively, all these types examples of orbits do indeed occur naturally.   The next challenge is to 
\begin{enumerate}
\item[a)] determine which covers of configuration spaces can appear (this is discussed in Section 2 along with its relationship with ``point pushing" problems) and 
\item[b)] determine how orbits of various types can be glued together, i.e. how they partition a fixed manifold or CW complex $N$ on which $\Diff_c^r(M)$ acts.   
\end{enumerate}
While a general classification is difficult, we solve these problems in sufficiently many cases to answer all the open questions mentioned above.    

The remainder of the work is organized into three main applications of the Orbit Classification Theorem, stated below.   Although we have stated these as ``applications", in many cases the Orbit Classification Theorem is merely the starting point for the result, with the bulk of the proof requiring many additional techniques.  

\para{Automatic continuity} 
Recent work of Hurtado and the second author \cite{Hurtado, Mann} shows that any abstract homomorphism between groups of $C^\infty$ diffeomorphisms, or from $\Homeo_c(M)$ to any separable topological group (of which all homeomorphism and diffeomorphism groups of manifolds are examples), are necessarily continuous when $M$ is compact, and ``weakly continuous" when $M$ is noncompact.  See Section \ref{sec:ac} for the definition of weak continuity and further discussion.  For actions of $C^r$ diffeomorphism groups, or actions of smooth diffeomorphism groups of by diffeomorphisms of lower regularity, such automatic continuity remains open, so we add continuity as an assumption.    

\para{Application I: Structure theorem in restricted dimension} 
As happens in the classical study of compact transformation groups, the lower the dimension of a space on which a group $G$ acts, the easier it is to classify all actions of $G$ on that space.   This is evident, for instance, in the work of Hsiang and Hsiang \cite{HH1} classifying actions of compact groups on manifolds, where a key assumption is that the manifold have dimension bounded by half the dimension of the group.  

The easiest consequence of Theorem \ref{thm:orbit_classification} comes under very strong assumptions on dimension; it gives a counterexample to (but near proof of) \cite[Conj. 1.1]{Militon} as follows:

\begin{thm}\label{thm:lifting}
Let $M$ be a connected manifold and $N$ a manifold with $\di(N)\le \di(M)$. If there is a nontrivial homomorphism $\rho: \Homeo_c(M)\to \Homeo(N)$, then $\di(N) = \di(M)$, there is a countable collection of covers $M_i$ of $M$, and disjoint embeddings $\phi_i:M_i \to N$ such that $\rho(f)$ agrees with $\phi_i f \phi_i^{-1}$ on the image of $M_i$ and is identity outside $\bigcup_i \phi_i(M_i)$.  
\end{thm}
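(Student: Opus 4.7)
The plan is to extract everything from Theorem~\ref{thm:orbit_classification} and combine it with the dimension hypothesis.

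First, applying Theorem~\ref{thm:orbit_classification} to $\rho$, every $\Homeo_c(M)$-orbit on $N$ is either a fixed point or the continuous injective image of a cover of $\Conf_n(M)$ for some $n\geq 1$. Such a cover is a manifold of dimension $n\di(M)$. A standard fact (invariance of domain, or covering dimension theory: restrict to a compact $d$-ball, use that a continuous bijection from compact to Hausdorff is a homeomorphism, and compare covering dimensions) is that there is no continuous injection from a $d$-manifold into a $d'$-manifold with $d>d'$. So a non-point orbit requires $n\di(M)\leq\di(N)\leq\di(M)$, forcing $n=1$ and $\di(N)=\di(M)$. Since $\rho$ is nontrivial, at least one non-point orbit exists, so $\di(N)=\di(M)$ and every non-point orbit is a continuous injective image of a cover $M_i$ of $\Conf_1(M)=M$.

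Second, since domain and codomain now have equal dimension, invariance of domain upgrades each continuous injection $\phi_i\colon M_i\to N$ to an open embedding. Distinct orbits are disjoint, so $\{\phi_i(M_i)\}$ is a disjoint family of nonempty connected open subsets of the second countable manifold $N$, and hence countable. On the complement $N\setminus\bigcup_i\phi_i(M_i)$ every orbit is a single point, so $\rho(f)$ restricts to the identity there for every $f\in\Homeo_c(M)$.

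Third, for the gluing formula, the identification of each orbit with a cover $M_i$ provided by Theorem~\ref{thm:orbit_classification} is $\Homeo_c(M)$-equivariant, where $M_i$ is equipped with the natural lifted action of $\Homeo_c(M)$: for $f\in\Homeo_c(M)$, any compactly supported isotopy from the identity to $f$ lifts uniquely to $M_i$ starting at the identity, and the endpoint $\widetilde{f}\in\Homeo(M_i)$ is well-defined independent of the isotopy. This canonical lift is what is meant by ``$f$ acting on $M_i$'' in the statement, and equivariance of $\phi_i$ gives $\rho(f)\circ\phi_i=\phi_i\circ\widetilde{f}$ on $M_i$, i.e., $\rho(f)|_{\phi_i(M_i)}=\phi_i f\phi_i^{-1}$ in the notation of the theorem.

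The main obstacle is not the combinatorics after Theorem~\ref{thm:orbit_classification}, which is short, but verifying the $\Homeo_c(M)$-equivariance of the map $\phi_i$; this uses the full strength of the orbit classification, namely that the orbit is not just topologically but \emph{equivariantly} the image of a cover of a configuration space. The dimension step, invariance of domain, and countability bookkeeping are then all straightforward.
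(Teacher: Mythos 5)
Your proof is correct and follows essentially the same approach as the paper's: invoke the orbit classification theorem, use the dimension hypothesis to force $n=1$, then use second countability for countability. The paper's version is terser (it leaves the equivariance and the identity-on-complement statements implicit), while you spell out the open-embedding upgrade via invariance of domain and the equivariance of the orbit map; both are genuinely part of what the theorem asserts, so your added detail is appropriate rather than superfluous.
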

\noindent  Examples of such lifts to covers abound.  For example, let $S_g$ be a surface of genus $g\geq2$, then $\Homeo_0(S_g)$ lifts to act on every cover of $S_g$.   One way to see this is as follows: fix a hyperbolic metric on $S_g$, so the universal cover can be identified with the Poincar\'e disc.  Then each $f \in  \Homeo_0(S_g)$ admits a unique lift to $\tilde{S_g}$ that extends to a continuous homeomorphism of the closed disc, pointwise fixing the boundary.  This gives a continuous action of $\Homeo_0(S_g)$ on $\tilde{S_g}$.  Embedding the compactification of the disc in another $2$-manifold $N$ (or into $S_g$ itself) and extending the action to be trivial outside the image of the embedded disc gives a nontrivial homomorphism $\Homeo_0(S_g) \to \Homeo_0(N)$.    

We also prove a similar result (see Theorem \ref{thm:structure} below) for continuous actions of diffeomorphism groups; in this case if $M$ is compact then $N$ is necessarily a cover of $M$, and the action on $N$ is transitive. This gives a new proof of Hurtado's classification of actions, given his prior results on continuity.  

\para{Weaker restrictions on dimension} With a weaker restriction on the dimension, we have the following version of a ``slice theorem" for actions of homeomorphism groups.   
\begin{thm}[Structure theorem for group actions by homeomorphisms] \label{flatbundle}
Let $M$ and $N$ be connected manifolds such that $\di(N)<2\di(M)$. If there is an action of $\Homeo_c(M)$ on $N$ without global fixed points, then $N$ has the structure of a {\em generalized flat bundle} over $M$.  When $\di(N)-\di(M)<3$, the fiber $F$ is a manifold as well.  
\end{thm}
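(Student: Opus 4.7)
The plan is to apply the Orbit Classification Theorem to restrict the possible orbit types, then organise the surviving orbits into a projection onto $M$ and verify this projection is a (generalised) flat bundle.

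By Theorem \ref{thm:orbit_classification}, every $\Homeo_c(M)$-orbit in $N$ is either a point or a continuous injective image of a connected cover of some $\Conf_n(M)$. The no-fixed-point hypothesis excludes point orbits, and the dimension bound $\di(N)<2\di(M)$ rules out $n\geq 2$: a cover of $\Conf_n(M)$ has topological dimension $n\,\di(M)\geq 2\di(M)$, and no continuous injection can send a locally compact space of topological dimension $d$ into a manifold of dimension strictly smaller than $d$ (restrict to a small compact neighbourhood, on which the map is a homeomorphism onto its image, and apply invariance of domain). Hence every orbit is a continuous injective image of a connected cover $\widetilde M_\alpha\to M$.

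Next I would define an $\Homeo_c(M)$-equivariant projection $\pi:N\to M$ by sending each $x\in N$ to the image in $M$ of any preimage of $x$ in the cover parametrising its orbit; well-definedness is immediate from injectivity of the parametrisation and equivariance is automatic. The first technical point is continuity of $\pi$, which I would establish via the intrinsic characterisation that $\pi(x)$ is the unique $m\in M$ such that the stabiliser of $x$ in $\Homeo_c(M)$ contains every homeomorphism supported in $M\setminus\{m\}$. This follows from fragmentation and local simplicity of $\Homeo_c$, in the spirit of the reconstruction arguments of Whittaker, Filipkiewicz and Rubin. With $\pi$ in hand, fix $m_0\in M$ and set $F:=\pi^{-1}(m_0)$; the point-pushing map gives a monodromy homomorphism $\pi_1(M,m_0)\to \Homeo(F)$, and pulling $N$ back to the universal cover of $M$ trivialises the bundle, yielding the generalised flat bundle identification $N\cong \widetilde M \times_{\pi_1(M)} F$.

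The main obstacle, and the most delicate step, is showing that $F$ is a manifold when $k:=\di(N)-\di(M)<3$. A priori $F$ is only a locally compact subspace of $N$ of topological dimension $k$, assembled from the discrete fibres of each embedded cover $\widetilde M_\alpha$ of the various orbits passing through the fibre. For $k=0$ this is automatic. For $k=1$ the plan is to exclude branching at a point $y\in F$: if two orbit-parametrisations had images touching along a $1$-dimensional common locus, the injectivity of each parametrisation together with $\Homeo_c(M)$-equivariance would force the two orbits to coincide on that locus, a contradiction. For $k=2$ an analogous equivariance argument combined with a surface-classification input would conclude that $F$ is locally Euclidean. I expect this fibre-manifold analysis, together with the continuity of $\pi$, to carry the bulk of the remaining technical work.
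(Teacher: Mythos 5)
The first part of your outline (ruling out orbits of type $\Conf_n(M)$ for $n\ge2$ and defining an equivariant projection $\pi:N\to M$ via the stabilizer condition) tracks the paper's argument closely, and your intrinsic characterisation of $\pi(x)$ is essentially the paper's condition $\Stab(x)_0\subset G_y$ phrased via supports. But the passage from there to the generalised flat bundle structure is circular as written: ``pulling $N$ back to the universal cover of $M$ trivialises the bundle'' presupposes that $N\to M$ is already a locally trivial bundle, which is precisely what needs to be proved. The paper instead constructs an explicit identification $I:(C_b\times F)/\Gamma\to N$, where $C_b$ is the \emph{maximal admissible cover} (not in general $\widetilde M$) and $\Gamma=\Stab(b)/\Stab(b)_0\cong\pi_1(M)/E_b$, using the lifted action $L:\Homeo_c(M)\to\Homeo(C_b)$ from Proposition \ref{lift} and choices $f_m$ taking $m$ to the basepoint; one must check well-definedness modulo $\Gamma$, injectivity, surjectivity, and bicontinuity. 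The monodromy factoring through $\pi_1(M)/E_b$ is an essential point your sketch elides.

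The more serious gap is in the claim that $F$ is a manifold when $\dim(N)-\dim(M)<3$. Your plan is to exclude ``branching'' by equivariance and then invoke surface classification, but this does not engage with the actual difficulty: $F=\pi^{-1}(b)$ is a closed subset of $N$ assembled from (possibly uncountably many) orbit-fibres, and there is no a priori reason for it to be locally Euclidean, or even to have a local topology amenable to an ad hoc branching analysis. The paper's argument is structural: the map $\psi:U\times F\to N$ given by a local transversal (a flow box in $\Homeo_c(M)$) is a homeomorphism onto an open subset of $N$, so $F\times(\text{ball})$ is a manifold, hence $F$ is a \emph{homology manifold}; and homology manifolds of dimension $\le2$ are genuine manifolds (Wilder/Bredon). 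Crucially, the paper notes this fails in codimension $3$ --- Bing's dog bone space $F$ satisfies $F\times\R\cong\R^4$ but is not a manifold, giving a product action on $F\times\R^n$ with non-manifold fibre. Your branching argument has no visible mechanism that would fail at codimension $3$, which signals that it is not capturing the relevant obstruction; you need the homology-manifold machinery, not an exclusion of singularities.
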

\noindent A {\em generalized flat bundle} is a foliated space of the form $(\tilde{M} \times F)/ \pi_1(M)$ where $\pi_1(M)$ acts diagonally by deck transformations on $\tilde{M}$ and on $F$ by some representation to $\Homeo(F)$.  See Section \ref{sec:bundle}.   
There are many examples of such actions on generalized flat bundles.  For instance, we have the following:  
\begin{exmp*}
Let $S_g$ be a surface of genus $g \geq 2$.  There are infinitely many non-conjugate actions of $\Homeo_0(S_g)$ on $S_g \times S^1$. 
One such family may be constructed as follows:  lift the action of $\Homeo_0(S_g)$ to an action on the universal cover $\tilde{S}_g$ as described above, and extend this to an action on the product $\tilde{S}_g \times S^1$ that is trivial on the $S^1$ factor.  Now take a homomorphism $\rho: \pi_1(S_g) \to \Homeo_0(S^1)$ with a global fixed point (or any action of this group on the circle with Euler number zero), and quotient $\tilde{S}_g \times S^1$ by the diagonal action of $\pi_1(S_g)$ acting by deck transformations on the first factor and via $\rho$ on the second.   The quotient space is topologically $S_g \times S^1$ (this is ensured by the section provided by the global fixed point, or more generally by an action with 0 Euler number), and the action of $\Homeo_0(S_g)$ naturally descends to this quotient.  Non-conjugate actions on $S^1$ will produce non-conjugate examples.   
\end{exmp*}

With more regularity, we obtain a stronger result and may remove the assumption on fixed points.  

\begin{prop}[No fixed points] \label{prop:no_fixed}
Suppose $M$ is a connected, closed smooth manifold of dimension at least 3, and suppose that $\Diff^r(M)$ $(0 \leq r \leq \infty)$ acts continuously on a connected manifold $N$ by $C^1 $ diffeomorphisms.  If the action has a global fixed point, then it is trivial.
\end{prop}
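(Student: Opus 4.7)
The plan has three stages: reduce to a trivial derivative action at $p$, promote this to local triviality via Bochner linearization, then globalize via Baire category and connectedness.

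First, let $p \in N$ be the global fixed point, and consider the tangent representation $D\rho : \Diff^r_0(M) \to GL(T_pN)$ given by differentiation at $p$. By Mather's simplicity theorem, $\Diff^r_0(M)$ is simple, so $D\rho$ is either trivial or injective. Injectivity is ruled out because $\Diff^r_0(M)$ contains elementary abelian $2$-subgroups $(\Z/2)^k$ of arbitrary rank $k$ (take $k$ pairwise disjoint embedded balls in $M$ with a non-trivial $C^r$-involution supported in each), while $GL(T_pN)$ has elementary abelian $2$-rank at most $\di(N)$, since commuting involutions in $GL_n(\R)$ are simultaneously diagonalizable. Hence $D\rho$ is trivial and every $\rho(f)$ has identity $1$-jet at $p$.

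Next, for any compact Lie subgroup $K \subset \Diff^r_0(M)$, the image $\rho(K)$ is a compact group of $C^1$-diffeomorphisms of $N$ fixing $p$ with trivial derivative. The Bochner--Montgomery linearization theorem (in its $C^1$-version) conjugates $\rho(K)$ near $p$ to its trivial linearization, so $\rho(K)$ fixes a neighborhood of $p$ pointwise. Define
\[
L := \{\, f \in \Diff^r_0(M) : \rho(f) = \mathrm{id} \text{ on some neighborhood of } p \,\}.
\]
A direct check using $\rho(g)(p) = p$ for every $g$ shows that $L$ is a normal subgroup. By the previous observation every finite-order element of $\Diff^r_0(M)$ lies in $L$, and such elements exist (e.g.\ $C^r$-involutions supported in balls), so simplicity forces $L = \Diff^r_0(M)$.

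To globalize, fix a countable neighborhood basis $\{V_n\}$ of $p$ in $N$ and set $L_n := \{\, f : \rho(f)|_{V_n} = \mathrm{id}\,\}$. Each $L_n$ is a closed subgroup by continuity of $\rho$, and the previous step gives $\Diff^r_0(M) = \bigcup_n L_n$. Since $\Diff^r_0(M)$ is a Polish topological group, the Baire category theorem forces some $L_n$ to have non-empty interior; being a subgroup of a connected topological group with non-empty interior, $L_n$ is clopen, hence equal to all of $\Diff^r_0(M)$. So some $V_n$ is pointwise fixed by the entire group, placing $p$ in the interior of the global fixed set $F \subset N$. The same argument applied at any $q \in F$ shows that $F$ is open; since $F$ is also closed and contains $p$, connectedness of $N$ yields $F = N$, i.e.\ the action of $\Diff^r_0(M)$ is trivial.

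The main obstacle I expect is invoking Bochner--Montgomery linearization under the minimal regularity hypothesis that $\rho$ takes values in $\Diff^1(N)$ rather than in a higher-regularity group; classical linearization statements typically assume more smoothness on the action, so a careful $C^1$-version must be cited or adapted. Secondary issues are the excluded case $r = \di(M)+1$ of Mather's simplicity theorem, where perfectness would have to replace simplicity, and extending the conclusion from the identity component $\Diff^r_0(M)$ to the full group $\Diff^r(M)$, which requires separately analyzing the mapping class group action on $N$.
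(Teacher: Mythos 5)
The pivotal step of your argument—both to rule out injectivity of $D\rho$ and, more seriously, to show the normal subgroup $L$ is nontrivial—is the claim that there exist nontrivial $C^r$ involutions (or, more generally, finite-order diffeomorphisms) supported in embedded balls in $M$. This is false: $\Diff^1_c(\R^n)$ is torsion-free. Indeed, if $f\in\Diff^1_c(\R^n)$ satisfies $f^k=\mathrm{id}$ and $f\neq\mathrm{id}$, then $\Int(\Fix(f))$ is a nonempty proper open subset of $\R^n$ (it contains the complement of $\mathrm{supp}(f)$), hence has a boundary point $p$, which lies in the closed set $\Fix(f)$. Linearizing the finite cyclic group $\langle f\rangle$ at $p$ by averaging a $C^1$ chart over the group—a construction that requires nothing beyond $C^1$—shows that $\Fix(f)$ near $p$ is a $C^1$ submanifold of dimension $\dim\ker(Df_p - I)$. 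Since $\Int(\Fix(f))$ accumulates at $p$, that kernel must be all of $\R^n$, so $Df_p=I$ and $f=\mathrm{id}$ near $p$, contradicting $p\notin\Int(\Fix(f))$. (This is a special case of Newman's theorem that the fixed set of a nontrivial finite group action on a manifold is nowhere dense.) Hence $L$ has no nontrivial finite-order elements to absorb, and the deduction $L=\Diff^r_0(M)$ never gets off the ground. Worse, the same observation rules out the entire Bochner strategy: every nontrivial compact Lie group has torsion (its maximal torus is nontrivial, or it is itself nontrivial finite), so $\Diff^1_c(\R^n)$ has no nontrivial compact subgroups whatsoever, and there is no compact group localized near a point to which Bochner--Montgomery linearization could be applied.

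This obstruction is precisely what the paper's proof is engineered to circumvent, and it explains why the hypothesis $\dim(M)\geq 3$ is essential—a hypothesis your argument never invokes, which should have been a red flag. In place of a finite-order element, the paper constructs a diffeomorphism $g$ supported near a finite set $X$ by interpolating along concentric spheres from an order-two rotation $R\in\SO(d)$ at the center to the identity at the boundary. The resulting $g$ is \emph{not} of finite order, but because $\pi_1(\SO(d))\cong\Z/2$ for $d\geq 3$, the power $g^4$ (whose restriction to each sphere traces a nullhomotopic loop in $\SO(d)$) is isotopic to the identity rel a neighborhood of $X$, hence lies in $\Stab(X)^r_0$. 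The contradiction is then obtained not by producing an open set of fixed points, but by using the Orbit Classification Theorem to show that points $y_i$ near the global fixed point would be periodic-but-not-fixed for $\rho(g)$, contradicting the quantitative estimate on periodic points of a $C^1$ map near a fixed point with identity derivative given in Observation~\ref{obs:order_2}. Your Baire-category globalization in the final paragraph is a clean idea, and your identification of triviality of $D\rho$ at a fixed point is consistent with the paper, but the heart of the argument—producing a nontrivial element acting trivially near the fixed point—does not survive.
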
 

\begin{thm}[Structure theorem for group actions by diffeomorphisms] \label{thm:structure}
Suppose $M$ is a connected, closed, smooth manifold and $N$ is a connected manifold with $\dim(N) < 2\dim(M)$.  Let $0 \leq r \leq \infty$ and $1 \leq s \leq \infty$.   If there exists a nontrivial continuous action $\Diff^r_0(M)\to \Diff^s_0(N)$, then the action is fixed point free, and $N$ is a topological fiber bundle over $M$ where the fibers are $C^s$-submanifolds of $N$.  
\end{thm}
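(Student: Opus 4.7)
The plan is to deduce the theorem from the Orbit Classification Theorem (Theorem \ref{thm:orbit_classification}) together with Proposition \ref{prop:no_fixed}, by extracting a global fiber bundle projection $N \to M$ from the constraint that every orbit sit inside a manifold of dimension less than $2\dim(M)$.

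First I would establish that the action has no global fixed points. When $\dim(M)\geq 3$ and $s\geq 1$, Proposition \ref{prop:no_fixed} applies directly: any global fixed point would force triviality, contradicting the nontriviality hypothesis. For $\dim(M)\le 2$ a separate argument is needed (for example, using simplicity or perfectness of $\Diff^r_0(M)$ combined with a derivative/blow-up argument at a putative fixed point) so that the ``fixed point free'' conclusion holds in all dimensions.

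Next I apply Theorem \ref{thm:orbit_classification}: every nontrivial orbit in $N$ is the continuous injective image of a cover of the $r$-jet bundle over $\Conf_n(M)$, modulo a fiberwise quotient by a linear group. Since such an orbit has dimension at least $n\cdot\dim(M)$ (the base dimension survives the quotient), the hypothesis $\dim(N)<2\dim(M)$ forces $n=1$ on every orbit. Thus each orbit is the image of a cover of a bundle over $M$ whose fiber is a linear-group quotient of the $r$-jet fiber. Because the action is by $C^s$-diffeomorphisms of $N$, an orbit-regularity argument exploiting smoothness along the $M$-direction upgrades each orbit to a $C^s$-immersed submanifold of $N$ whose tangent space at every point contains the natural ``pushing'' subspace coming from $M$.

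The remaining task is to glue these orbit-bundles into a single topological fiber bundle $\pi: N \to M$ with $C^s$-submanifold fibers. Local triviality is obtained by choosing a small ball $D\subset M$ and using the subgroup $\Diff^r_c(D)\subset\Diff^r_0(M)$ to move points of $\pi^{-1}(D)$ transversally, producing a local product structure. Globalizing these local trivializations and using that all orbits share a common infinitesimal projection to $M$ (induced from the pointwise stabilizer filtration) yields a continuous $\pi: N \to M$, and the $C^s$ structure on fibers comes from the regularity step together with a slice argument. The main obstacle is ruling out nontrivial cover-monodromy on individual orbits and producing a \emph{single} globally defined projection to $M$ --- rather than a patchwork of maps from different orbits to different covers of $M$ --- which is where the dimension bound $\dim(N)<2\dim(M)$ and the smoothness of the action must combine in an essential way.
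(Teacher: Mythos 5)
Your skeleton — orbit classification $\Rightarrow$ $n=1$, Proposition \ref{prop:no_fixed} for large $\dim(M)$, then a local product structure via compactly supported diffeomorphisms on a small ball — matches the paper's outline for $\dim(M) > 2$. However, there are genuine gaps at the two hardest steps.

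First, the $\dim(M)=2$ case of fixed-point freeness is not a ``separate argument'' one can dispose of with a derivative/blow-up computation as you suggest. The key mechanism in Proposition \ref{prop:no_fixed}/\ref{prop:no_fix2} is that $\pi_1(\SO(d)) = \mathbb{Z}/2$ for $d \geq 3$, so the concentrically defined rotation $g$ has $g^4$ isotopic to the identity \emph{through diffeomorphisms supported in an annulus around the fixed set}, forcing $g^4 \in \Stab(X_i)^r_0$ while $g$ moves $X_i$, contradicting Observation \ref{obs:order_2}. For $d=2$, $\pi_1(\SO(2)) = \mathbb{Z}$ and no power of $g$ becomes null-homotopic in this way, so the proof breaks exactly at the point where simplicity, perfectness, and derivative considerations have already been used. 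The paper is explicit about this and instead defers: it first builds the bundle structure over $N - \Fix(\rho)$, then uses the resulting $\widetilde{\SL(2,\mathbb{R})}$-dynamics on fibers (hyperbolic derivatives $2$ and $1/2$, uniform rotation at fixed points) to rule out accumulation of orbits at global fixed points and deduce $\Fix(\rho)=\emptyset$ a posteriori. Your proposal does not contain this idea.

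Second, ``the $C^s$ structure on fibers comes from the regularity step together with a slice argument'' is where most of the real work lies, and the regularity of orbits does not yield it. The paper does \emph{not} show orbits are $C^s$-immersed submanifolds; instead it constructs a subgroup $S$ of local rotations isomorphic to $\widetilde{\SO(n)}$ (resp.\ $\widetilde{\SL(2,\mathbb{R})}$ for $n=2$), observes that the action of $S$ preserves the fiber $F = p^{-1}(b)$, uses the classification of closed subgroups of Lie groups to pin down the $S$-orbits in $F$ (either points, or $S^{n-1}$, or $\mathbb{RP}^{n-1}$), and then invokes the local linearization theorem for \emph{compact} group actions to conclude $F$ is a $C^s$ submanifold. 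Without this compact-group-action input, the ``homology manifold'' conclusion you get from the local product chart is strictly weaker (recall the Bing dog-bone example in the $C^0$ setting).

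Finally, the ``cover-monodromy / single global projection'' issue you flag as the main obstacle is not actually an obstruction in the paper's construction: because $n = 1$, each stabilizer $G_y$ satisfies $\Stab(x)^r_0 \subset G_y \subset \Stab(x)$ for a \emph{unique} $x \in M$, so $p(y):=x$ is canonical and globally defined with no patching required; continuity follows from $p^{-1}(B) = N - \Fix(\rho(H(B)))$. The real work is instead at the two points above.
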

In the case where $\dim(M)=\dim(N)$ and $r = s =\infty$, this recovers \cite[Theorem 1.3]{Hurtado}, with an independent proof.  

\para{Application II: Extension problems} 
Let $W$ be a manifold with boundary $M$. If $W$ has a $C^r$ structure, then there is a natural ``restrict to the boundary" map
\[
\res^r(W,M): \Diff^r_0(W)\to \Diff^r_0(M)
\]
which is surjective. The {\em extension problem}, introduced by Ghys in \cite{Ghys}, asks whether $\res^r(W,M)$ has a {\em group theoretic section}, i.e. a homomorphism $\rho$ such that $\res^r(W,M) \circ \rho$ is the identity map.  In general, one expects that the answer may depend both on $r$ and on the topology of $W$.   For simplicity, we will consider only the case where $W$ and $\partial W = M$ are both connected, although these techniques can be adapted with some work to cover the case where $\partial W$ is not connected.  

One case where the extension problem has a positive answer is for homeomorphism groups of balls and spheres. 
Let $\mathbb{D}^{n+1}$ be the $n+1$ dimensional ball and $S^n$ the $n$-sphere. Then 
\[
\mathbb{D}^{n+1}=\{(x,r)|x\in S^n, r\in [0,1]\}/(x,0)\sim (y,0)\]
and there is a standard ``coning off" action $C: \Homeo_0(S^n)\to \Homeo(\mathbb{D}^{n+1})$ defined by $C(f)(x,r)=(f(x),r)$.  (Going forward, we refer to this as {\em coning}.)  We answer \cite[Question 3.18]{KB} in the following theorem.

\begin{thm}\label{coning}
Let $M$ be a connected manifold with $\dim(M)>1$, and suppose $\pi_1(M)$ has no nontrivial action on the interval (e.g. a group generated by torsion). Then 
$\res^0(W,M)$ has a section if and only if $M=S^n$ and $W=\mathbb{D}^{n+1}$. 
\end{thm}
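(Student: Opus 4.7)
The ``if'' direction is immediate from the coning action. For the converse, suppose $\rho: \Homeo_0(M) \to \Homeo(W)$ is a section of $\res^0(W,M)$. The plan is to combine Theorem \ref{thm:orbit_classification} with Theorem \ref{flatbundle} to force $W$ to be the closed cone on $M$, and then invoke the classical fact that the cone on a closed connected manifold is itself a manifold if and only if the base is a sphere.

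First, apply Theorem \ref{thm:orbit_classification} to the $\Homeo_c(M)$-action on $W$ via $\rho$. Since $\di(W) = \di(M) + 1$ and $\di(M) \geq 2$, the bound $n\,\di(M) \leq \di(W)$ forces $n \leq 1$, so every non-fixed orbit is a continuous injective image of a cover of $M$, and $\partial W$ is itself an orbit (the trivial cover). Let $F$ denote the set of global fixed points of $\rho(\Homeo_c(M))$. Then $F$ is closed in $W$ and disjoint from $\partial W$, so a punctured collar $V \cong M \times (0, \varepsilon)$ of $\partial W$ lies in $\Int(W) \setminus F$. Let $U$ be the connected component of $\Int(W) \setminus F$ containing $V$; it is $\rho$-invariant since $\Homeo_c(M)$ is path-connected. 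Then Theorem \ref{flatbundle} applies to $U$ (a connected boundaryless manifold of dimension $\di(M) + 1 < 2\,\di(M)$ on which $\Homeo_c(M)$ acts without global fixed points), yielding a generalized flat bundle $U \cong (\tilde{M} \times F')/\pi_1(M)$ over $M$ whose fiber $F'$ is a $1$-manifold.

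Next I analyze $F'$ and the monodromy $\mu: \pi_1(M) \to \Homeo(F')$. By the hypothesis that $\pi_1(M)$ has no nontrivial action on an interval, $\mu$ is trivial on any $\pi_1(M)$-invariant $\R$-component of $F'$. A continuity argument shows that leaves of the flat foliation accumulating onto $\partial W$ must be trivial covers of $M$, so the corresponding region of $F'$ is pointwise $\pi_1(M)$-fixed and contains an $\R$-component. Connectedness of $U$ implies that $\pi_1(M)$ permutes the components of $F'$ transitively, and combined with the existence of a pointwise-fixed component this forces $F'$ to be a single connected component. The case $F' = S^1$ (pointwise fixed) would make $U \cong M \times S^1$ a compact clopen subset of the connected manifold $W$ (for $M$ compact), forcing $U = W$ and contradicting $\partial W \neq \emptyset$. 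So $F' = \R$ with trivial monodromy, and $U \cong M \times \R$. One end of the $\R$-factor accumulates onto $\partial W$ via $V$; the other must accumulate onto $F$ (since $\partial W$ is a single connected orbit), and because the $M$-leaves are connected and shrink continuously, they collapse there to a single fixed point $p \in F$. Connectedness of $W$ then yields $W = \overline{U} \cup \{p\}$, the closed cone on $M$.

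Finally, the cone on a closed connected manifold $M$ is a topological manifold with boundary $M$ if and only if $M \cong S^n$, in which case the cone is $\DD^{n+1}$. The hardest steps will be (a) the continuity argument establishing that leaves near the collar are trivial covers of $M$, where the hypothesis on $\pi_1(M)$ is essential to rule out nontrivial monodromy scenarios at the boundary, and (b) justifying that the non-boundary end of $M \times \R$ collapses onto a single fixed point rather than a larger subset of $F$, both of which require a careful interplay between the flat bundle structure on $U$, the connectedness of $W$, and the manifold structure near $\partial W$ and $F$.
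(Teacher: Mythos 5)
Your proposal follows essentially the same route as the paper's proof of the more general Theorem \ref{thm:coning_general}: apply the flat-bundle structure theorem to $W$ minus the fixed set, use the $1$-dimensionality of the fiber together with the hypothesis on $\pi_1(M)$ to trivialize the monodromy near $\partial W$, establish a shrinking property showing the horizontal $M$-leaves Hausdorff-converge to a single interior point, and conclude $W$ is the cone on $M$, which is a manifold only when $M = S^n$ (via Poincar\'e, as in Proposition \ref{onepoint}). One technical wrinkle in your version deserves flagging: because you discard $\partial W$ before invoking Theorem \ref{flatbundle}, the relevant fiber component $C'$ is a copy of $\R$, and the hypothesis concerns actions on $[0,1)$, not $\R$ — these are genuinely different conditions, since for instance $\Z/2$ acts nontrivially on $\R$ by $x \mapsto -x$ while having no nontrivial action on $[0,1)$. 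Your ``continuity argument'' would need to be the observation that the monodromy extends to the one-end compactification $C' \cup \{b\} \cong [0,1)$ fixing the boundary point $b \in \partial W$, which forces orientation preservation and lets the hypothesis bite; the paper sidesteps this by working directly with $W' = W \setminus \Fix(\rho)$, which retains $\partial W$ and so has fiber component $[0,1)$ from the outset. Your step (b) — the collapse onto a single point — is the paper's Claim \ref{sameA}, which the paper proves carefully by compactness of $M$ and continuity of $\rho$, reinforced by the separating-manifold argument of Claim \ref{int} to control the interiors of the leaves; you correctly identify this as the remaining substantial work but do not supply it.
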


\begin{thm}
For $n>1$, any section of $\res^0(\mathbb{D}^{n+1},S^n)$ is conjugate to the standard coning.  In fact, any nontrivial action of $\Homeo_0(S^n)$ on $\mathbb{D}^{n+1}$ is conjugate to the standard coning.
\end{thm}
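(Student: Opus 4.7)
The plan is to combine the Orbit Classification Theorem (Theorem \ref{thm:orbit_classification}) with the Structure Theorem for homeomorphism group actions (Theorem \ref{flatbundle}) to reconstruct the coning structure from any nontrivial action of $\Homeo_0(S^n)$ on $\mathbb{D}^{n+1}$, and then exhibit a conjugation. Any such action preserves $\partial \mathbb{D}^{n+1} \cong S^n$, inducing a homomorphism $\rho_\partial : \Homeo_0(S^n) \to \Homeo(S^n)$; by simplicity of $\Homeo_0(S^n)$, $\rho_\partial$ is either trivial or injective. For a section of $\res^0(\mathbb{D}^{n+1},S^n)$ the restriction is tautologically the identity, and in the general injective case I would invoke Theorem \ref{thm:lifting} with $M = N = S^n$: since $\pi_1(S^n) = 1$ for $n \geq 2$ the only available cover is $S^n$ itself, so $\rho_\partial$ arises from conjugation by a single self-homeomorphism of $S^n$, which extends by coning to a homeomorphism of $\mathbb{D}^{n+1}$. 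Post-conjugating by this, I may assume $\rho_\partial$ is the standard action; the trivial-boundary case is handled separately at the end.

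Let $F \subset \Int(\mathbb{D}^{n+1})$ denote the global fixed set, and consider the open $(n+1)$-manifold $N := \Int(\mathbb{D}^{n+1}) \setminus F$ on which $\Homeo_0(S^n) = \Homeo_c(S^n)$ acts without global fixed points. Since $\dim N = n+1 < 2n$ (using $n>1$) and $\dim N - \dim S^n = 1 < 3$, Theorem \ref{flatbundle} gives $N$ the structure of a generalized flat $S^n$-bundle over $S^n$ with $1$-manifold fiber $F'$. Because $\pi_1(S^n) = 1$ for $n \geq 2$ this bundle is globally trivial, so $N \cong S^n \times F'$ with $\Homeo_0(S^n)$ acting as the standard action on the $S^n$ factor.

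I would then analyze the orbit space $Q := \mathbb{D}^{n+1}/\Homeo_0(S^n)$. Each orbit is a point of $F$, an $S^n$ fiber of the bundle, or the boundary orbit $\partial \mathbb{D}^{n+1}$, so $Q = F \cup F' \cup \{t_\partial\}$ set-theoretically. Local analysis near each fixed point (the $S^n$ leaves of the bundle must accumulate onto the fixed point in a controlled way, yielding a half-interval chart) and near $t_\partial$, combined with compactness and Hausdorffness of $Q$, exhibits $Q$ as a compact connected $1$-manifold (possibly with boundary). Since $\mathbb{D}^{n+1}$ is simply connected, $Q \neq S^1$ (an $S^n$-bundle over $S^1$ with $n \geq 2$ has $\pi_1 = \Z$), forcing $Q \cong [0,1]$. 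The orbit $\partial \mathbb{D}^{n+1}$ sits at one endpoint; the other endpoint must be a single fixed point $c$, for otherwise $\mathbb{D}^{n+1} \cong S^n \times [0,1]$ would have two boundary components. Hence $F = \{c\}$ and $\mathbb{D}^{n+1} \setminus \{c\} \cong S^n \times (0,1]$ equivariantly, and the conjugating homeomorphism to the standard coning is built by sending $c$ to the coning vertex and matching product coordinates.

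The main obstacle I anticipate is the orbit-space step: showing that $Q$ is globally a $1$-manifold with boundary requires controlling how the $S^n$ leaves collapse onto $F$ and, crucially, ruling out exotic fixed sets (e.g.\ an interior arc or more general compact continuum, each of which would a priori be consistent with the trivialization $N \cong S^n \times F'$). The interplay between this trivialization and the embedding of $F$ in $\mathbb{D}^{n+1}$ must be tightened using the continuity of the action and the ambient manifold structure. For the trivial-on-boundary case, a similar analysis applied to $\Int(\mathbb{D}^{n+1}) \setminus F$ shows that the closure of any $S^n \times F'$-component in $\mathbb{D}^{n+1}$ would, if nonempty, have to collapse at both ends (producing an embedded suspension $\Sigma S^n = S^{n+1}$, impossible in an $(n+1)$-ball) or accumulate onto the $n$-dimensional fixed boundary in a way that contradicts the $S^n$-fiber structure; this forces $N = \emptyset$ and hence the action is trivial overall.
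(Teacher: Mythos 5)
Your opening reductions --- using Theorem~\ref{thm:lifting} to put the boundary action into tautological form, passing to $W' = \DD^{n+1} \setminus \Fix(\rho)$, and invoking Theorem~\ref{flatbundle} to get a trivial flat bundle over $S^n$ with one-dimensional fiber --- match the opening of the paper's proof of Theorem~\ref{thm:coning_general}. The gap is in the concluding step, and you flag it correctly yourself: you want to show the orbit space $Q = \DD^{n+1}/\Homeo_0(S^n)$ is a compact one-manifold with boundary, but nothing in the flat-bundle structure alone rules out $\Fix(\rho)$ being a wild arc or more general compact continuum, nor rules out the fibered open set $S^n \times F'$ having infinitely many components accumulating on $\Fix(\rho)$ in a way that makes $Q$ non-locally-Euclidean there. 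Your phrase ``local analysis near each fixed point\ldots yielding a half-interval chart'' is exactly what needs proof, and the proposal supplies no mechanism for it; the same problem undermines the sketch for the trivial-on-boundary case, where ``collapse at both ends'' is asserted without justification.

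The paper resolves this without ever analyzing $\Fix(\rho)$ or $Q$ globally. It isolates the single component $W''$ of $W'$ whose closure meets $\partial\DD^{n+1}$, identified with $(0,1]\times S^n$ via an embedding $em$, picks a subsequential limit $\alpha$ of $em(r_n, x)$ as $r_n\to 0$, and proves the shrinking property (Claim~\ref{sameA}): the entire horizontal spheres $em(r_n\times S^n)$ Hausdorff-converge to $\alpha$, using only compactness of $S^n$ and continuity of $\rho$. Combined with the nested-separating-spheres observation (Claim~\ref{int}), this forces the frontier of $W''$ in $\Int\DD^{n+1}$ to be the single point $\alpha$, so $\overline{W''}$ is the one-point compactification of $(0,1]\times S^n$, i.e.\ a ball, and a short connectedness argument gives $\overline{W''}=\DD^{n+1}$; the conjugacy to coning is then just $em$ extended over $\alpha$. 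The lemma you are missing is precisely the shrinking property: once you have it, both your orbit-space route and the paper's more direct one go through, and the trivial-on-boundary case also falls out (a component $S^n\times(0,1)$ with both ends collapsing to points yields a clopen $S^{n+1}$ inside $\DD^{n+1}$, contradiction).
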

\noindent We actually prove a more general result than Theorem \ref{coning}, see Section \ref{sec:extension}.

Ghys \cite{Ghys} posed the extension problem for the genus $g$ handlebody in the smooth category (see also \cite[Question 3.15, Question 3.19]{KB}).  As a consequence of Theorem \ref{thm:structure}, we not only answer Ghys' question, but also answer the section problem for manifolds with boundary, and any regularity of at least $C^1$.  

\begin{cor}[No differentiable extensions]  \label{cor:no_extension}
Let $W$ be a compact, smooth manifold with boundary $M$, and let $r\ge 1$.  Then $\res^r(W,M): \Diff^r_0(W)\to \Diff^r_0(M)$ does not have a continuous section.   If $r = \infty$, any section is automatically continuous by \cite{Hurtado}, and this hypothesis may be removed. 
\end{cor}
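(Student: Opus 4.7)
I would argue by contradiction: suppose $\rho\colon \Diff^r_0(M)\to \Diff^r_0(W)$ is a continuous group-theoretic section of $\res^r(W,M)$. Then $\rho$ gives a continuous action of $\Diff^r_0(M)$ on $W$ by $C^r$ diffeomorphisms whose restriction to $\partial W=M$ is the tautological action. This preserves $\Int(W)$, yielding a continuous $C^r$-action on $\Int(W)$, which is nontrivial since if $\rho(f)$ were the identity on $\Int(W)$ then by continuity $\rho(f)=\mathrm{id}_W$, forcing $f=\mathrm{id}$. I first assume $\dim(M)\ge 2$, so that $\dim(\Int(W))=\dim(M)+1<2\dim(M)$, placing us in the setting of Theorem \ref{thm:structure}.

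By Theorem \ref{thm:structure}, $\Int(W)$ is a topological fiber bundle $\pi\colon \Int(W)\to M$ whose fibers are $C^s$-submanifolds of dimension $1$. Circle fibers are impossible, since a circle bundle over the compact base $M$ would be compact while $\Int(W)$ is not; so $\pi$ is an $\R$-bundle. Fix $p\in M$, write $F_p$ for the fiber and consider its closure $\overline{F_p}\subset W$. Since $F_p\cong\R$ is noncompact but $W$ is compact, $\overline{F_p}\setminus F_p\ne\emptyset$. Any added point is fixed by the stabilizer $\Stab_p\subset \Diff^r_0(M)$: a point in $\Int(W)$ fixed by $\Stab_p$ projects under $\pi$ to a $\Stab_p$-fixed point in $M$, but $\Stab_p$ acts transitively on $M\setminus\{p\}$ for $\dim(M)\ge 2$, forcing the point into $F_p$, a contradiction. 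Thus $\overline{F_p}\setminus F_p\subset \partial W$, and the same transitivity forces $\overline{F_p}\setminus F_p=\{p\}$. So $\overline{F_p}$ is a topological circle meeting $\partial W$ only at $p$.

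The final contradiction uses $r\ge 1$. Differentiating $\rho(g)$ at $p$ for $g\in\Stab_p$ gives a linear action of $\Stab_p$ on $T_pW$ preserving $T_pM$, and the tangent line to $F_p$ at $p$ must be $\Stab_p$-invariant. Since the image of $\Stab_p \to \GL(T_pM)$ contains $\GL_n^+(\R)$ with $n=\dim(M)\ge 2$, which has no invariant line, the tangent line to $F_p$ must be transverse to $T_pM$, i.e.\ it is the normal direction to $M$ in $W$. But at a boundary point, only the inward normal enters $\Int(W)$, so both ends of $F_p\cong\R$ approach $p$ tangent to the same inward normal ray. Combined with the trivialization of $\pi$ over a collar neighborhood of $p$ in $M$---where locally the fibers look like normal translates of points of $M$---the ``two-ended teardrop'' behavior of $\overline{F_p}$ is incompatible with a genuine local product structure on $\pi$, giving the required contradiction.

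This completes the case $\dim(M)\ge 2$. The remaining case $\dim(M)=1$ (so $M=S^1$ and $W$ a compact surface with connected boundary $S^1$) is handled separately, by analyzing continuous $C^r$-actions of $\Diff^r_0(S^1)$ on such surfaces that extend the tautological action on $\partial W$: simplicity of $\Diff^r_0(S^1)$ and the absence of nontrivial finite-dimensional continuous representations constrain the possible behavior near $\partial W$ enough to preclude any section. For $r=\infty$, the continuity hypothesis is removed by invoking the automatic continuity result of \cite{Hurtado}. The main obstacle I expect is the closing step of the middle case---translating the ``teardrop'' tangency picture into a rigorous incompatibility with the bundle structure---since the bundle is only defined on $\Int(W)$ and must be matched carefully against the collar structure along $M$.
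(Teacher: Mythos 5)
Your general strategy --- restrict to $\Int(W)$ and invoke Theorem~\ref{thm:structure} to obtain an $\R$-bundle $p\colon \Int(W)\to M$, then analyze the closure of the fibers at the boundary --- matches the spirit of the paper, which deduces the corollary directly from Theorem~\ref{thm:structure} together with a topological obstruction. However, your closing step does not go through as written, and the gap is real. The "teardrop tangency" argument is not a proof: the closure $\overline{F_p}$ need not be a $C^1$ curve at $p$ (you only know $F_p$ is a $C^s$ submanifold of the interior), so there may be no well-defined tangent line to differentiate; "transverse to $T_pM$'' does not single out a preferred normal direction without a metric; and even granting both, you explicitly concede that you cannot turn the picture into a contradiction with the local product structure. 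Since this is the entire content of the step that is supposed to use $r\ge 1$, the proof is incomplete at its most critical point.

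There is a cleaner and more robust way to finish, which I believe is closer to what the paper has in mind. Once you have the $\rho$-equivariant map $p\colon\Int(W)\to M$ whose fibers $F_b$ have closure meeting $\partial W$ only at $b$ (your argument via transitivity of $\Stab_b$ on $M\smallsetminus\{b\}$ for this is fine), show that $p$ extends continuously to $\bar{p}\colon W\to M$ with $\bar{p}|_{\partial W}=\mathrm{id}$: if $x_n\to m\in\partial W$ and $p(x_n)\to b$ after passing to a subsequence, choose $f_n\to\mathrm{id}$ in $\Diff_0^r(M)$ with $f_n(p(x_n))=b$, so $\rho(f_n)(x_n)\in F_b$ converges to $m$, forcing $m=b$ since $\overline{F_b}\cap\partial W=\{b\}$. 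Then $\bar{p}$ is a continuous retraction of the compact manifold $W$ onto its boundary, which is impossible (this is the standard consequence of Lefschetz duality generalizing Brouwer's theorem). This bypasses all the differentiability-at-the-boundary issues. Separately, your treatment of $\dim(M)=1$ is only a sketch of an intention; since $\dim(\Int W)=2=2\dim(M)$, Theorem~\ref{thm:structure} does not apply there, and the claim that "simplicity and absence of finite-dimensional representations constrain the behavior near $\partial W$'' is not an argument. This case needs a genuine separate treatment (Ghys's original paper \cite{Ghys} handles exactly $M=S^1$, $W=\mathbb{D}^2$).
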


\para{Application III: $\Homeo_0(S^1)$ actions on surfaces} 
In contrast with Theorem \ref{coning}, there are infinitely many non-conjugate extension actions of $\Homeo_0(S^1)$ on $\DD^2$.  
For example, in addition to the standard coning, one may take the action on the open annulus (the configuration space of two marked points, or
$\PConf_2(S^1)$), with one end naturally compactified to a circle and the other to a point.  Section \ref{sec:Militon} is devoted to the general classification problem posed by Militon in \cite{Militon}.  We prove the following. 
\begin{thm}
For each closed, proper set $K \subset [0,1]$ containing $0$, and a continuous function $\lambda: [0,1] - K \to \{0,1\}$ there is an action $\rho_{K,\lambda}: \Homeo_0(S^1)\to \Homeo_0(\mathbb{D}^2)$; this collection of actions has the property that any nontrivial homomorphism $\rho: \Homeo_0(S^1)\to \Homeo_0(\mathbb{D}^2)$ is conjugate to $\rho_{K,\lambda}$ for some $K,\lambda$.  
\end{thm}
The construction of these actions is given in Section \ref{sec:Militon}, following Militon.  Militon's work also 
tells us exactly which of the actions $\rho_{K,\lambda}$ are conjugate (see Section \ref{sec:Militon}), so this gives a complete classification of actions of $\Homeo_0(S^1)$ on the disc.   A similar classification may be obtained by the same methods for actions of $\Homeo_0(S^1)$ on other orientable surfaces.  This proves \cite[Conjecture 2.2]{Militon}.

\para{Structure of the paper}
\begin{itemize}
\item In Section \ref{sec:small_quotient}, we establish a ``small quotient subgroup theorem" that is the main ingredient in our Orbit Classification Theorem. 
\item In Section 3,  we briefly discuss automatic continuity, then prove the Orbit Classification Theorem, give Theorem \ref{thm:transitive} as an easy consequence, and classify homomorphisms between homeomorphism groups when $\dim(M)=\dim(N)$.
 \item In Section 4, we discuss admissible covers.
  \item  In Section 5, we discuss how orbits fit together and prove a structure theorem in the $C^0$ category. 
 \item In Section 6, we study the extension problem in the homeomoprhism case. 
 \item In Section \ref{sec:diffeo_structure}, we prove the structure theorem for actions in the $C^r$ category, $r\ge 1$ and study the extension problem in the differentiable case.
 \item In Section 8, we classify actions of $\Homeo_0(S^1)$ on $\mathbb{D}^2$ and other surfaces.
 \end{itemize}

\para{Acknowledgements}
KM was partially supported by NSF grant DMS-1606254 and a Sloan fellowship, and thanks Caltech for their hospitality. LC was partially supported by NSF grant DMS-2005409. We thank Peter May and Shmuel Weinberger for telling us about Bing's ``dog bone space" and homology manifolds, and Tom Church and Benson Farb for comments on the manuscript. We also thank Emanuel Militon for pointing out a mistake in an early version of the proof of Lemma \ref{lem:supp_ball}, and thank the anonymous referees for their careful reading and suggesting numerous improvements to the work. 

\section{Small quotient subgroups} \label{sec:small_quotient}

\subsection{Topological preliminaries}
We begin by recalling some basic facts about the topology of homeomorphism and diffeomorphism groups and configuration spaces.  For simplicity, we assume all manifolds are connected, although analogous results hold in the disconnected case, provided one takes into account the surjections from $\Homeo_c(M)$ or $\Diff^r_c(M)$ to the homeomorphism or diffeomorphism groups of any union of connected components of $M$ obtained by restriction to those components.    When we speak of $\Diff^r_c(M)$, we tacitly assume that $M$ has a smooth structure.  We equip the groups $\Homeo_c(M)$ and $\Diff^r_c(M)$ with the standard $C^0$ and $C^r$ compact-open topologies, respectively.   

It follows from deep work of Edwards--Kirby \cite{EK} and Cernavskii \cite{Cernasvskii} that $\Homeo_c(M)$ is locally connected (the main result of \cite{EK} is that the homeomorphism groups of a compact manifold is locally contractible, and the proof easily gives local connectedness for $\Homeo_c(M)$ for general $M$).  The following is a rephrasing of a simplified version of their major technical theorem \cite[Theorem 5.1]{EK} and a key consequence.  

\begin{prop}[\cite{EK}] \label{prop:EK}
Let $K \subset M$ be compact, $U$ any neighborhood of $K$, and $D \subset K$ a closed (possibly empty) set.  Then any embedding of $K$ into $U$ that is sufficiently close to the identity (i.e. the inclusion) and restricts to the identity on $D$ can be deformed to the identity through embeddings that are identity on $D$, and these embeddings can be taken to have image in $U$.  
\end{prop} 
\begin{cor}[\cite{EK}, Corollary 7.3] \label{cor:stab_connect}
The pointwise stabilizer $\Stab(X)$ of a finite set of points $X$ in $M$ is also a locally connected subset of $\Homeo_c(M)$.  In particular the quotient of $\Stab(X)$ by its identity component is a discrete subgroup.    
\end{cor} 
Local contractibility and hence local connectedness of $\Diff^r_c(M)$ (and the relative version fixing a finite set) is classical, a discussion and references can be found in \cite[Chapter 1]{Banyaga}.  

Another important and well-known consequence of the work of Edwards and Kirby is that $\Homeo_c(M)$ has the {\em fragmentation property}.  
\begin{defn}
A subgroup $G \subset \Homeo(M)$ has the {\em fragmentation property} if, for any open cover of $M$, the group $G$ can be generated by homeomorphisms supported on elements of the cover.  
\end{defn} 
Note that such a group necessarily lies in $\Homeo_c(M)$.  For $\Diff^r_c(M)$, fragmentation is less difficult, and may be proved by splitting up a time-dependent vector field whose time-one flow is the diffeomorphism in question, using a partition of unity. See \cite[Chapter 2]{Banyaga}.

Since configuration spaces will play an important role in this work, we record the following basic tools.  Recall that $\PConf_n(M)$ is defined to be the complement of the fat diagonal in $M^n$, and $\Conf_n(M)$ is its quotient by permutations of the factors.  
\begin{prop}
Let $M$ be a connected manifold and $X \subset M$ be a finite set.  Then $\Homeo_c(M) / \Stab(X)$ is homeomorphic to $\Conf_{|X|}(M)$.  The same holds with $\Homeo$ replaced by $\Diff^r$.  
\end{prop} 

\begin{proof} 
The map $\Homeo_c(M) / \Stab(X) \to \Conf_{|X|}(M)$ given by $[f] \mapsto f(X)$ is bijective and continuous.  We need to show that the inverse of this is continuous.  Suppose we are given a configuration $X_1 \in \Conf_{|X|}(M)$, and given $\epsilon >0$ for some $\epsilon$ less than 1/4 the minimum distance between points in $X_1$, we wish to show that any configuration $\epsilon$-close (pointwise) to $X_1$ can be taken to $X_1$ via a homeomorphism close to the identity.   For each such point $x$, we may find a homeomorphism supported on the $2\epsilon$-neighborhood of $x$ and taking $x$ to its nearby point $y \in X_2$ without moving any of the other points in the configuration.  The composition of all such is supported on a compact set and moves each point distance at most $2\epsilon$, which suffices to prove the claim.   To prove this for diffeomorphisms instead, one may perform a similar proof using the flow of a smooth vector field instead, working in local chart to ensure the flow is close to the identity.   
 \end{proof} 
 
 \begin{prop} \label{prop:cover} 
Let $M$ be a connected manifold and let $X \subset M$ be a finite set.  Then  $\Homeo_c(M)$ is a topological fiber bundle over $\Conf_{|X|}(M) =\Homeo_c(M) / \Stab(X)$, and the same holds with $\Homeo$ replaced by $\Diff^r$.  
\end{prop} 

\begin{proof} 
Since $\Stab(X)$ is a closed subgroup, it suffices to produce a locally defined continuous sections $s: \Conf_{|X|}(M) \to \Homeo_c(M)$, then the local product structure over an open set $U$ where $s$ is defined is given by assigning to $c \in U$ the coset $s(c)\Stab(X)$.  
Without loss of generality, we may do this at the identity, i.e. over the configuration $X$.  Since we are working locally, we may label points as in $\PConf_{|X|}(M)$.  Choose a disk $D_i$ about each $x_i \in X$, small enough so that the $D_i$ are pairwise disjoint.  Let $\lambda_{i, j}$ for $1 \leq j \leq n = \dim(M)$ be smooth vector fields supported on $D_i$ that agree with the coordinate vector fields in a small local chart about $x_i$, and $\phi(t)_{i,j}$ the time $t$ map of the flow of $\lambda_{i,j}$.  For each $i = 1, 2, \ldots |X|$, there is a continuous, injective map $s_i$ from a neighborhood of $0$ in $\R^n$ to $\Diff^\infty_c(M)$ given by $s_i(t_1, \ldots t_n) \mapsto \phi(t_1)_{i,1} \circ \ldots \circ \phi(t_n)_{i,n}$.  This map is a homeomorphism onto its image in $\Diff^\infty_c(M)$.  Thus, any configuration sufficiently close to $X$ can be written uniquely as $s_1(\vec{v}_1) \circ \ldots \circ s_{|X|} (\vec{v}_{|X|})(X)$, (for $\vec{v_i}$ close to $0$ in $\R^n$) which gives the desired local section.  
\end{proof} 

\subsection{Small quotient subgroups} 

\begin{defn}
Let $G$ be a topological group, and $A \subset G$ a subgroup.  We say that $A$ has {\em small quotient} in $G$ or $A$ {\em is a small quotient subgroup} if there exists $n \in \mathbb{N}$ such that, for any continuous, injective map of an $n$-disc $\DD^n \to G$, the projection $\DD^n \to G \to G/A$ is non-injective.  
If $A$ has small quotient, the {\em codimension} of $A$ is the maximum $n$ such that there exists a continuous injective map $\DD^n \to G$ that descends to an injective map to $G/A$.  
\end{defn} 

Before stating our main theorem, we give two basic properties. 

\begin{obs} \label{obs:easy} (Properties of small quotient subgroups)
\begin{enumerate}
\item If $A \subset E \subset G$ are subgroups and $A$ has small quotient, then $E$ also has small quotient, with codimension bounded above by $\codi(A)$.  
\item If $H \subset G$ is a subgroup and $A$ has small quotient in $G$, then $A \cap H$ has small quotient in $H$, and the codimension of $A$ in $G$ is bounded below by the codimension of $A \cap H$ in $H$.  
\end{enumerate} 
\end{obs}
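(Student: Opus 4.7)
The plan is to verify each statement directly from the definition of small quotient by tracking how injective discs behave under the natural maps between coset spaces. Neither part requires any deep input beyond set-theoretic properties of coset spaces; the main ``obstacle'' is just being careful about the direction of implications.

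For part (1), the key observation is that since $A \subset B$, there is a natural surjection $\pi \colon G/A \to G/B$ such that the quotient map $G \to G/B$ factors as $G \to G/A \to G/B$. Suppose $\phi \colon \DD^n \to G$ is a continuous injective map whose composition with $G \to G/B$ is injective. Then that composition factors through $G/A$, and since the composition is injective, the map $\DD^n \to G/A$ must also be injective. Equivalently, non-injectivity in $G/A$ forces non-injectivity in $G/B$. So any $n$ witnessing the small quotient property for $A$ also witnesses it for $B$, which gives both the small quotient property for $B$ and the bound $\codi(B) \le \codi(A)$.

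For part (2), let $\phi \colon \DD^n \to H$ be a continuous injective map whose descent to $H/(A \cap H)$ is injective; composing with the inclusion $\iota \colon H \hookrightarrow G$ gives a continuous injective map $\iota \circ \phi \colon \DD^n \to G$. I claim its descent to $G/A$ is injective. Indeed, if $\phi(x)$ and $\phi(y)$ lie in the same $A$-coset of $G$, then $\phi(x)^{-1}\phi(y) \in A$, but this element also lies in $H$, so it lies in $A \cap H$; hence $\phi(x)$ and $\phi(y)$ lie in the same $(A \cap H)$-coset of $H$, forcing $x = y$ by injectivity of the descent to $H/(A \cap H)$. Applying this contrapositively, any $n$ witnessing small quotient of $A$ in $G$ also witnesses small quotient of $A \cap H$ in $H$, and taking the supremum over injective discs yields $\codi_H(A \cap H) \le \codi_G(A)$.

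Both arguments are little more than a diagram chase, so the writeup will essentially consist of the two paragraphs above with perhaps a remark pointing out that the only subtlety is that injective descent in $G/A$ need not imply injective descent in $G/B$ for part (1) (one needs the direction that uses the factorization), and similarly in part (2) one uses the inclusion $H \hookrightarrow G$ together with the identity $A \cap H = \{h \in H : h \in A\}$.
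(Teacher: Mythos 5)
Your proof is correct and follows the same two observations as the paper: part (1) via the factorization $G \to G/A \to G/B$, and part (2) via the natural injection $H/(A \cap H) \hookrightarrow G/A$. You have simply spelled out the diagram chase that the paper compresses into one sentence per item.
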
 

\begin{proof} 
The first item is just the observation that the projection map $G \to G/E$ factors through $G \to G/A \to G/E$.
The second follows from the fact that $H/(A \cap H)$ embeds in $G/A$.
\end{proof} 

\noindent \textbf{Convention.}  Going forward, in this section $M$ always denotes a connected manifold.   
 
\begin{defn}
A subgroup $G \subset \Homeo(M)$ has {\em local simplicity} if, for any relatively compact open ball $B \subset M$, the subgroup of $G$ consisting of elements with compact support contained in $B$ is nontrivial, path connected and (algebraically) simple. 
\end{defn}

Local simplicity of $\Homeo_c(M)$ is a result of Anderson \cite{Anderson} who shows $\Homeo_c(B)$ is algebraically simple.  The combined work of Epstein \cite{Epstein}, Mather \cite{Mather1, Mather2} and Thurston \cite{Thurston} establishes local simplicity for $\Diff_c^r(M)$ when $1 \leq r \leq \infty$ and $r \neq \di(M)+1$.  Whether $\Diff_c^{\di(B)+1}(B)$ is algebraically simple is a famous open question in the field.  
The first step towards our orbit classification theorem is the following result on closed, small quotient subgroups of locally simple groups.   

\begin{lem}  \label{lem:supp_ball}
Let $G \subset \Homeo(M)$ be a locally simple group, and $A \subset G$ a closed subgroup with small quotient in $G$.  
Then there exists a ball $B \subset M$ such that $A$ contains all homeomorphisms in $G$ supported on $B$.  
\end{lem}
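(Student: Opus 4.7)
My plan is to argue by contradiction: suppose that for every ball $B \subset M$ we have $G_B \not\subset A$, and derive a contradiction with the small quotient hypothesis by constructing an injective continuous map $\Phi \colon \DD^n \to G$ (where $n$ is the small quotient index for $A$) whose composition with the projection to $G/A$ is still injective.

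The first observation is that under the contradictory assumption, for each ball $B$ the set $G_B \cap A$ is a closed, proper subgroup of the path-connected simple group $G_B$, and hence has empty interior in $G_B$. Indeed, in a connected topological group, any subgroup containing a neighborhood of the identity is all of the group. So $G_B \setminus A$ is open and dense in $G_B$ for every ball $B$.

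The construction proceeds by picking pairwise disjoint open balls $B_1, \dots, B_n \subset M$ and choosing, for each $i$, an injective continuous path $\gamma_i \colon [0,1] \to G_{B_i}$ with $\gamma_i(0) = 1$ such that $\gamma_i(s)^{-1}\gamma_i(t) \notin A$ for $s \neq t$. This is possible because the quotient $G_{B_i}/(G_{B_i} \cap A)$ is a nontrivial, path-connected, Hausdorff space, hence contains an embedded arc, which lifts to an injective path in $G_{B_i}$ using local connectedness (cf.\ the topological preliminaries in Section~\ref{sec:small_quotient}). Then the product map
$$
\Phi(t_1, \dots, t_n) = \gamma_1(t_1) \cdots \gamma_n(t_n)
$$
is continuous and injective on $[0,1]^n \cong \DD^n$, because the disjointness of supports means the factors commute and each is recoverable from the product.

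The crux of the proof, and the main obstacle, is to verify that $\Phi$ also projects injectively to $G/A$. This amounts to ruling out relations of the form $\prod_i \gamma_i(s_i)^{-1}\gamma_i(t_i) \in A$ for $s \neq t$. Each $\gamma_i$ has been chosen to avoid self-collisions, but nothing a priori prevents ``cross-diagonal'' collisions in which nontrivial contributions from several $B_i$'s combine to land in $A$. To eliminate these, I would build the $\gamma_i$ inductively, using at each step the nowhere-density of $G_{B_i} \cap A$ together with a Baire-category/genericity argument to perturb $\gamma_i$ off the closed nowhere-dense set of path-values that would create a collision with the previously chosen $\gamma_1, \dots, \gamma_{i-1}$. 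Once the inductive construction succeeds, $\Phi$ is a continuous injection of $\DD^n$ into $G$ whose composition with the projection to $G/A$ is still injective, directly violating the small quotient hypothesis for $A$ and completing the contradiction.
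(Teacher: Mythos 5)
Your opening moves are the same as the paper's — fix $n = \codi(A)+1$ disjoint balls, work inside the product $G' = G_{B_1}\times\cdots\times G_{B_n}$, and try to exhibit an embedded $\DD^n$ whose projection to $G/A$ stays injective. But the step you yourself flag as ``the crux and main obstacle'' — ruling out cross-diagonal collisions by a Baire-category perturbation — is a genuine gap, and I don't think it can be repaired along the lines you sketch. Once $\gamma_1,\dots,\gamma_{k-1}$ are fixed, the set you need $\gamma_k$ to avoid is a union of translates $g^{-1}A \cap G_{B_k}$ indexed by \emph{uncountably} many group elements $g = \prod_{i<k}\gamma_i(t_i)^{-1}\gamma_i(s_i)$. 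Each translate is closed and nowhere dense, but a union of uncountably many nowhere-dense sets has no reason to be meager, so Baire category does not let you perturb off it. Worse, there are natural scenarios that make the collision set large: if $A' := A\cap G'$ is something like a ``diagonal'' subgroup that projects onto each factor $G_{B_i}$, then for fixed earlier coordinates the bad set in $G_{B_k}$ can be a full coset of a large subgroup, and the inductive choice genuinely cannot proceed. Nothing in your contradiction hypothesis ($G_B \not\subset A$ for every ball $B$) rules that configuration out — in fact it is exactly the configuration that survives the paper's easy case.

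The paper handles precisely this obstruction by a different, algebraic route. It looks at the closures $\overline{A_i}$ of the projections $A_i = p_i(A')$ onto each factor $G_{B_i}$. If \emph{all} of these are proper, then the quotients $G_{B_i}/\overline{A_i}$ are nontrivial path-connected Hausdorff spaces, each containing an arc, and the product of these arcs gives an embedded $\DD^n$ in $G'/\prod\overline{A_i}$, which contradicts the codimension bound — this is the clean version of the transversality idea, made to work precisely because the $\overline{A_i}$ are already used to quotient each factor, so there are no cross-diagonal collisions to worry about. In the remaining case some $\overline{A_1} = G_{B_1}$, and the paper pivots entirely: it observes that $K := A\cap G_{B_1}$ is the kernel of the projection $A'\to\prod_{i>1}G_{B_i}$, hence normal in $A_1$; that the normalizer of the closed subgroup $K$ is closed, so $\overline{A_1}=G_{B_1}$ forces $K$ normal in all of $G_{B_1}$; that $K$ is nontrivial because $A$ has small quotient; and finally that local simplicity forces $K = G_{B_1}$, i.e.\ $G_{B_1}\subset A$. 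Your proposal never engages with the dense-projection case, and that case is where the real content of the lemma — and the use of local simplicity, which your sketch never actually invokes beyond citing path-connectedness — lives.

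So: the setup is parallel, but the core of your argument is a transversality/genericity step that doesn't go through, and it is missing the normalizer-plus-simplicity mechanism that actually closes the proof.
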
 

\begin{proof}  
Let $n=\codi(A)+1$ and fix $n$ disjoint closed balls in $M$.  Let $G_i \subset G$ denote the subgroup of homeomorphisms with support in the $i$th ball.  
Since $G_i$ and $G_j$ commute whenever $i \neq j$, we may identify the product $G' := G_1 \times \ldots \times G_n$ with a subgroup of $G$.  Let $A' = A \cap G'$, this is also a closed subgroup.  
Let $p_i: A' \to G_i$ be the natural projection, and let $A_i = p_i(A')$.    Our goal is to show that some $G_i$ is contained in $A$.  

Observation \ref{obs:easy} (1) implies that $A_1 \times \ldots \times A_n \supset A'$ has codimension less than $n$ in $G'$.  Let $\overline{A_i}$ denote the closure of $A_i$ in $G_i$. Then $\overline{A_1} \times \ldots \times \overline{A_n}$ has codimension less than $n$ also, and 
\[ G'/(\overline{A_1} \times \ldots \times \overline{A_n}) = G_1/\overline{A_1} \times \ldots \times G_n/\overline{A_n}. \]
Suppose that $\overline{A_i} \neq G_i$ for some $i$.  Then $G_i/\overline{A_i}$ contains at least two points, and is Hausdorff (since $\overline{A_i}$ is closed) and path connected; since it is Hausdorff there is an injective path $[0,1] \to G_i/\overline{A_i}$. 
Thus, if $G_i\neq \overline{A_i}$ for {\em all} $i$, we would have an embedded $\DD^n$ in $G_1/\overline{A_1} \times \ldots \times G_n/\overline{A_n}$, contradicting the bound on codimension. 
We conclude that  $G_i = \overline{A_i}$ for some $i$.  Reindexing if needed, we assume $i=1$.  

Let $p: A' \to G_2 \times \ldots \times G_n$ denote the product map $p_2 \times \ldots \times p_n$, and let $K = \Ker(p)$.  Note that $K = A \cap G_1$, which is a closed subgroup of $G_1$.  Since $A$ has small quotient in $G$, this means that $K = A \cap G_1$ has small quotient in $G_1$, so is nontrivial. (Recall that $G_1$ is nontrivial by definition of local simplicity.)  
We have that $p_1(K) = K$ and $p_1(A') = A_1$.  Since the image of a normal subgroup under a surjective group homomorphism is normal, $K = p_1(K)$ is a nontrivial normal subgroup of $p_1(A') = A_1$.  In other words, $A_1$ is contained in the normalizer $N_{G_1}(K)$ of $K$ in $G_1$.  
It is a basic fact that the normalizer of a closed subgroup of a topological group is closed (if $H \subset G$ is closed, then $N_G(H) = \bigcap_{h \in H} \{g \in G \mid ghg^{-1} \in H\}$ is an intersection of closed sets because $g \mapsto ghg^{-1}$ is continuous), so $N_{G_1}(K)$ is a closed subgroup containing $A_1$, i.e. equal to $G_1$.  
By local simplicity, $G_1 = K = A \cap G_1$, which is what we needed to show.  
\end{proof}

Borrowing terminology from \cite{GM}, we make the following definition.
\begin{defn}
A subgroup $G \subset \Homeo(M)$ is {\em locally continuously transitive} if for all $x, y \in M$ and ball $B$ containing $x$ and $y$, there exists a 1-parameter subgroup $f_t$ of $G$ supported on $B$  such that $f_1(y) =x$. 
\end{defn}

\begin{lem} \label{lem:nontransitive}
Suppose $G \subset \Homeo(M)$ is a locally continuously transitive group 
and $A \subset G$ a closed subgroup with small quotient in $G$.  
Then there exists a finite set $X$ such that $A$ setwise preserves $X$ and acts transitively on $M - X$; or in the case $\dim(M) = 1$, acts transitively on connected components of $M-X$. 
\end{lem}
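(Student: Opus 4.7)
My plan is to apply Lemma \ref{lem:supp_ball} to obtain a ball $B$ with $G_B \subset A$, set $U := A \cdot B = \bigcup_{a \in A} a(B)$, and analyze the complement $F := M \setminus U$.

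First, $A$ acts transitively on $U$: for $x, y \in U$ with $x \in a_1(B)$, $y \in a_2(B)$ and $a_1, a_2 \in A$, local continuous transitivity in $B$ gives $k \in G_B \subset A$ with $k(a_1^{-1}(x)) = a_2^{-1}(y)$, so $a_2 k a_1^{-1} \in A$ sends $x$ to $y$. A fragmentation argument shows that $A$ in fact contains every $g \in G$ with support compactly contained in $U$: such a support is covered by finitely many $a(B)$ with $a \in A$, and the factors of a corresponding fragmentation each lie in $a G_B a^{-1} \subset A$.

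The heart of the proof is to show $\partial F := F \setminus \Int(F)$ is finite. Since $A$ preserves $F$ and hence $\partial F$, we have $A \subset \Stab_G(\partial F)$, which is a closed subgroup of $G$; by Observation \ref{obs:easy}, $\Stab_G(\partial F)$ is small quotient with codimension at most $\codi(A)$, so it suffices to show $\partial F$ infinite implies $\codi(\Stab_G(\partial F)) = \infty$. Given any $n$, I would pick $n$ distinct points $x_1, \ldots, x_n \in \partial F$ and pairwise disjoint open balls $V_i \ni x_i$; since $\partial F$ has empty interior in $M$, each $V_i$ contains some $q_i \notin \partial F$. By local continuous transitivity, choose 1-parameter subgroups $\gamma_i:\R \to G$ supported in $V_i$ with $\gamma_i(1)(x_i) = q_i$. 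Then $\gamma_i(1) \notin \Stab_G(\partial F)$, and closedness of $\Stab_G(\partial F)$ makes $\gamma_i^{-1}(\Stab_G(\partial F))$ a proper closed subgroup of $\R$, hence discrete near $0$.

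Setting $\phi(\vec t) := \prod_i \gamma_i(t_i)$, the disjoint supports make $\phi$ injective on a small cube $[0,\epsilon]^n$, while $\phi(\vec s)^{-1}\phi(\vec t) = \prod_i \gamma_i(t_i - s_i)$ lies in $\Stab_G(\partial F)$ iff each $\gamma_i(t_i - s_i)$ does (since the product acts as $\gamma_i(t_i-s_i)$ on each $V_i$ independently and as identity elsewhere), iff $t_i = s_i$ for all $i$. Thus $\phi$ descends to an injective map $[0,\epsilon]^n \hookrightarrow G / \Stab_G(\partial F)$ for arbitrary $n$, contradicting the small-quotient hypothesis.

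With $\partial F$ finite, in $\dim(M) \ge 2$ the connectedness of $M \setminus \partial F$ forces one of the disjoint open sets $U$ and $\Int(F)$ to be empty; since $U \ne \emptyset$, we get $F = \partial F$ finite, and $X := F$ works. In $\dim(M) = 1$, $M \setminus \partial F$ may be disconnected, but the containment $G_{U,c} \subset A$ ensures $A$ acts transitively on each arc-component of $U$, and an analogous argument applied to components of $\Int(F)$ handles the remaining pieces, giving $X := \partial F$. The main obstacle is the generic-deformation step for the $\gamma_i$, which we handle by using the setwise stabilizer of $\partial F$ (a set with empty interior) rather than pointwise stabilizers, thereby sidestepping any pathological local structure of $F$ itself.
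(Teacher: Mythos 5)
Your proof takes a genuinely different route from the paper's. The paper's proof of Lemma~\ref{lem:nontransitive} does not invoke Lemma~\ref{lem:supp_ball} at all: it works directly with a maximal collection of pairs $(x_i, y_i)$ with $y_i \notin A \cdot x_i$, packages disjointly-supported flows moving each $y_i$ to $x_i$ into an $\R^m \to G$, and observes that $\R^m \cap A$ is discrete, so $m \leq \codi(A)$; maximality then forces the complement of the $x_i, y_i$ to lie in a single orbit. You instead first call Lemma~\ref{lem:supp_ball} to get a ball $B$ with $G_B \subset A$, build the open orbit $U = A \cdot B$, and then show $\partial F$ is finite by the (structurally very similar) trick of embedding a cube into $G/\Stab_G(\partial F)$. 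Both arguments hinge on the same mechanism --- commuting flows on disjoint supports, each detecting a failure of invariance --- but what the flows ``see'' is different: the paper detects pairs in distinct $A$-orbits, you detect boundary points of the constructed orbit.

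Two points should be flagged. First, your route requires \emph{local simplicity} of $G$ (the hypothesis of Lemma~\ref{lem:supp_ball}), which the statement of Lemma~\ref{lem:nontransitive} does not assume --- it only asks for local continuous transitivity and fragmentation. In the paper's applications $G$ is $\Homeo_c(M)$ or $\Diff^r_c(M)$, so this is satisfied, but as a proof of the stated lemma you have silently strengthened the hypotheses. Second, the $\dim(M)=1$ case is genuinely incomplete as written. In dimension one, $\Int(F)$ can be nonempty (e.g.\ $A$ the pointwise stabilizer of two points in $S^1$), and ``an analogous argument applied to components of $\Int(F)$'' hides a recursion whose termination is not immediate, since each pass removes a new open orbit and potentially introduces new boundary points. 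A clean fix is to bypass $U = A \cdot B$ entirely and instead set $Z := $ the set of points whose $A$-orbit is not open. Then $Z$ is $A$-invariant and closed, $Z$ has empty interior by Lemma~\ref{lem:supp_ball} applied to $G_{B'}$ for a ball $B' \subset Z$, and your codimension argument applied to $\Stab_G(Z)$ shows $Z$ is finite; the components of $M - Z$ are then each a single open orbit by connectedness, in all dimensions. This is essentially the tidy version of what you were reaching for. The remaining details (disjointness of supports forcing $\prod_i \gamma_i(r_i) \in \Stab_G(\partial F)$ iff each factor is, transitivity of $A$ on $U$ via $a_2 k a_1^{-1}$) are correct.
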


\begin{proof} 
If the action of $A$ is transitive, the statement is immediate with $X = \emptyset$.  Thus we may assume the action of $A$ is nontransitive, so there are points $x, y \in M$ such that $y \notin A\cdot x$.  Let $\{ f_t\}_{t \in \R} \subset G$ be a 1-parameter subgroup of $G$ consisting of homeomorphisms supported on a precomact neighborhood of $x$ such that $f_1(y) = x$.  In particular, this flow $f_t$ is not a subgroup of $A$.  Since $A$ is closed, $\{t : f_t \in A\}$ is a closed, proper subgroup of $\R$, so either trivial or isomorphic to $\Z$.  In the first case $\{f_t\}/ (\{f_t\} \cap A)$ is $\R$, and in the second case it is $S^1$; either contains an embedded $1$-dimensional disc. 

More generally, suppose we can find distinct points $x_i$ and $y_i$, $1 \leq i \leq m$, where $y_i \notin A \cdot x_i$.  In the case where $M$ is 1-dimensional, we further require that $y_1 < x_1 < ... <y_m < x_m$ with respect to the ordering induced by $\R$ in some local coordinate chart. Then we can take a continuous homomorphism $\R^n \to G$, where the $i$th factor is a flow whose time 1 map takes $y_i$ to $x_i$, with support disjoint from the other factors.  This can be constructed iteratively, as follows. Take a simple path from $y_1$ to $x_1$ disjoint from each of the remaining points $x_i, y_i$ and define a vector field tangent to the path and supported in a small neighborhood $U_1$ of the path, again disjoint from the other points.  An appropriate time scaling of the flow of this vector field will suffice.  If $U_1$ is chosen small enough, $M - U_1$ will be connected (or, in the 1-dimensional case, have each remaining pair $x_i$ and $y_i$ in the same connected component), and the process may be repeated.

Then $\R^n \cap A$ is a closed subgroup of $\R^n$, and each element of $\R^{i-1} \times \{1\} \times \R^{n-i}$ takes $y_i$ to $x_i$, so is not an element of $A$. We know that every linear subspace of $\R^n$ has to intersect one of $\R^{i-1} \times \{1\} \times \R^{n-i}$ because at least one coordinate should be nonzero. Therefore $\R^n \cap A\subset \R^n$ does not contain any linear subspaces, which means that $\R^n \cap A\subset \R^n$ is a discrete subgroup. Thus, $\R^n / (\R^n \cap A)$ is $n$-dimensional; and if $D$ is a small topologically embedded $n$-disc in $\R^n$, then the projection of $D$ to the quotient $G/A$ will be injective.  

Thus, if $A$ has codimension at most $n$, then the maximal value of $m$ for which we may find such a set of points is $m = n$.  Fix such a maximal collection of points $x_1, \ldots, x_m, y_1,...,y_m$.   In the case where $\di(M) \geq 2$, maximality implies that $M - \{x_1, \ldots, y_m\}$ is contained in a single orbit $\mathcal{O}$. Set $X = M - \mathcal{O}$, which is a subset of $\{x_1, \ldots, y_m\}$, hence finite. By definition of orbit, $A$ preserves $\mathcal{O}$ and the complement $X$, which is what we needed to show.  

In the case $\di(M) = 1$, maximality implies that each connected component of $M - \{x_1, \ldots, y_m\}$ is contained in a single orbit, and we may replace $\mathcal{O}$ with the union of these finitely many orbits and conclude as above.  
\end{proof}

\begin{defn}
For a finite set $X$, let $\Stab(X)$ denote the setwise stabilizer of $X$, and $\Stab(X)_0$ the connected component of $\Stab(X)$ containing the identity.  
\end{defn} 

\begin{prop}\label{Stab0}
The group $\Stab(X)_0$ is path-connected, and $\overline{\Homeo_c(M-X)} \subset \Stab(X)_0$.
\end{prop}

\begin{proof}
Path-connectedness of $\Stab(X)_0$ is Corollary \ref{cor:stab_connect}.   The containment $\overline{\Homeo_c(M-X)} \subset \Stab(X)_0$ is clear, since $\Homeo_c(M-X)$ by definition contains homeomorphisms isotopic to the identity relative to $X$, and $\Stab(X)_0$ is closed.   For the reverse containment, suppose one is given a path $f_t$ with $f_0 = id$ in $\Stab(X)_0$.  We need to approximate $f_1$ arbitrarily well by a homeomorphism $\hat{f}_1$ where $\hat{f}_t$ is a path based at $id$ of compactly supported homeomorphisms of $M-X$.  We can do this using techniques of \cite{EK}, via the following argument.   

Given some $\epsilon>0$, choose $\delta$ small enough so that the image of a $2\delta$-neighborhood of $X$ under $f_t$ remains inside the $\epsilon$ neighborhood $N_\epsilon$ of $X$.  Thus, $f_t(M - N_\epsilon) \cap N_{2\delta} = \emptyset$.  Break the path $f_t$ into time intervals $0 = t_0, t_1, \ldots t_k = 1$ small enough so that  $f_t \circ f_{t-1}^{-1}$ is close enough to the identity (considered as an embedding of $M - N_\epsilon$ into $M$) so that Proposition \ref{prop:EK} applies, taking $U$ to be the complement of $\overline{N_\delta}$ in $M$.  Using the proposition, $f_t \circ f_{t-1}^{-1}$ can be isotoped to the identity via an isotopy supported outside of $N_\delta$, which can be extended to a homeomorphism of $M$ that pointwise fixes $N_\delta$.  Composing these isotopies, one produces a path of homeomorphisms whose time one map agrees with $f_1$ on $M - N_\epsilon$ and pointwise fixes $N_\delta$, hence is compactly supported in $M-X$.  
\end{proof}

Using the results above, we obtain the following.  

\begin{thm}  \label{thm:small_quotient_homeo}
Let $M$ be a manifold, and $A \subset \Homeo_c(M)$ a closed subgroup with small quotient.  Then there is a finite set $X \subset M$ such that $\Stab(X)_0 \subset A \subset \Stab(X)$, and $\Homeo_c(M)/A$ is homeomorphic to an intermediate cover of $C_X=\Homeo_c(M)/\Stab(X)_0\to \Conf_{|X|}(M)=\Homeo_c(M)/\Stab(X)$.  
\end{thm}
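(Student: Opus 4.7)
The plan is to combine the preceding two lemmas with a density/closedness argument to produce both $A \subset \Stab(X)$ and $\Stab(X)_0 \subset A$, and then read off the covering space conclusion.

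First, apply Lemma \ref{lem:nontransitive} to $G = \Homeo_c(M)$, which has the fragmentation property (Edwards--Kirby) and is locally continuously transitive since between any two points in a ball one can interpolate by a compactly supported isotopy. This produces a finite set $X \subset M$ that is setwise preserved by $A$ and such that $A$ acts transitively on $M - X$ (or on each of its finitely many components, when $\dim M = 1$). In particular $A \subset \Stab(X)$. Next, apply Lemma \ref{lem:supp_ball}, using Anderson's local simplicity of $\Homeo_c(M)$, to obtain an open ball $B \subset M$ with $\Homeo_c(B) \subset A$; since every element of $\Homeo_c(B)$ preserves $X$ setwise, $B$ must be disjoint from $X$. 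For any $g \in A$, conjugation yields $\Homeo_c(gB) = g \Homeo_c(B) g^{-1} \subset A$, and transitivity of $A$ on $M - X$ (or iteration of Lemma \ref{lem:supp_ball} component-by-component in the $\dim M = 1$ case) ensures that these translates form an open cover of $M - X$. Fragmentation in $\Homeo_c(M - X)$ over this cover then gives $\Homeo_c(M - X) \subset A$.

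The main technical obstacle is the final inclusion $\Stab(X)_0 \subset A$. The difficulty is that elements of $\Stab(X)_0$ need only pointwise fix $X$, so they may move points arbitrarily close to $X$ (think of a homeomorphism of $\R^n$ that rotates a small neighborhood of $0$); in particular they need not lie in $\Homeo_c(M - X)$. The strategy is a density argument: given $f \in \Stab(X)_0$ and $\epsilon > 0$, modify $f$ inside an $\epsilon$-neighborhood $U_\epsilon$ of $X$ using local collar coordinates to produce $f_\epsilon$ which is the identity on $U_\epsilon$, agrees with $f$ outside $U_{2\epsilon}$, and interpolates continuously on the annular region in between; such an $f_\epsilon$ lies in $\Homeo_c(M - X)$. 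The continuity of $f$ at the (finitely many) points of $X$ forces $f_\epsilon \to f$ in the compact-open topology as $\epsilon \to 0$. Since $f_\epsilon \in A$ and $A$ is closed in $\Homeo_c(M)$, we conclude $f \in A$.

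Having established $\Stab(X)_0 \subset A \subset \Stab(X)$, the covering space conclusion is immediate. By \cite[Theorem 5.1]{EK}, the stabilizer $\Stab(X)$ is locally connected in $\Homeo_c(M)$, so $\Stab(X)/\Stab(X)_0$ is discrete, and $\Homeo_c(M)/A$ naturally factors the covering map $C_X = \Homeo_c(M)/\Stab(X)_0 \to \Homeo_c(M)/\Stab(X) = \Conf_{|X|}(M)$ as an intermediate cover, as claimed.
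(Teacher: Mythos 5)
The first part of your argument is sound and essentially the paper's: Lemma \ref{lem:nontransitive} gives $A \subset \Stab(X)$, Lemma \ref{lem:supp_ball} plus conjugation and fragmentation gives $\Homeo_c(M-X) \subset A$, and the covering conclusion follows from local connectedness of $\Stab(X)$ via \cite[Theorem 5.1]{EK}. (Your observation that $B$ must avoid $X$ because $\Homeo_c(B) \subset \Stab(X)$ is a nice shortcut to the paper's step of intersecting with $\Homeo_c(M-X)$ first.)

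The gap is in the density argument for $\Stab(X)_0 \subset A$. In the paper's convention, $\Homeo_c(M-X)$ is not merely the group of homeomorphisms with compact support in $M-X$: it is the subgroup of those that are isotopic to the identity \emph{through compactly supported isotopies of $M-X$}, which is a proper subgroup in general. Your modification $f_\epsilon$, even granting that the interpolation on the annular region can be carried out (itself a nontrivial step needing the annulus theorem), only has compact support away from $X$; nothing in the construction tracks the isotopy class relative to $M-X$, so you cannot conclude $f_\epsilon \in \Homeo_c(M-X) \subset A$. A concrete symptom that something is wrong: the modification as written never uses $f \in \Stab(X)_0$ rather than $f \in \Stab(X)$, so if it were valid it would show every element of $\Stab(X)$ is a limit of elements of $\Homeo_c(M-X) \subset \Stab(X)_0$; since $\Stab(X)_0$ is clopen in $\Stab(X)$ this would force $\Stab(X) = \Stab(X)_0$, which fails, for instance, for $M = S_g$ with $g \geq 2$ and $X$ a point, where $\pi_0(\Stab(X)) \cong \pi_1(S_g)$ via the point-pushing sequence \eqref{eq:push}. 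The paper avoids this by approximating the \emph{path} $f_t$ in $\Stab(X)_0$ from $\mathrm{id}$ to $f$ by a path of compactly supported homeomorphisms of $M-X$, which is precisely what \cite[Theorem 5.1]{EK} supplies; the isotopy data is essential and cannot be dropped from the argument.
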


\begin{proof} 
Since $\Homeo_c(M)$ satisfies the hypothesis of Lemma \ref{lem:nontransitive}, we know that $A$ is contained in $\Stab(X)$ for some finite set $X$, and acts transitively on the complement of $X$.     Let $A'$ denote the intersection of $A$ with $\Homeo_c(M-X) \subset \Homeo_c(M)$.  Then $A'$ is small quotient in $\Homeo_c(M-X)$ by Observation \ref{obs:easy}.  By Lemma \ref{lem:supp_ball}, there exists a ball $B \subset M - X$ such that $A'$ contains all homeomorphisms supported on $B$, hence $A$ contains all homeomorphisms supported on $B$.  
Since $\Homeo_c(M-X)$ has the fragmentation property and the action of $A$ on $M-X$ is transitive, we conclude that $A' = \Homeo_c(M-X)$.  Since $A$ was assumed closed in $\Homeo_c(M)$, it therefore contains the closure of $\Homeo_c(M-X)$, therefore by Proposition \ref{Stab0}, we know that $A$ contains $\Stab(X)_0$.

The map $\Homeo_c(M)/A \to \Homeo_c(M)/\Stab(X)$ has a local section that can be obtained by composing a local section for $\Homeo_c(M) \to \Homeo_c(M)/\Stab(X)$ with the projection to $\Homeo_c(M)/A$.  Since $\Stab(X)/\Stab(X)_0$ is discrete, $A/\Stab(X)_0$ is discrete, so $\Homeo_c(M)/A$ is a cover of $\Homeo_c(M)/\Stab(X)$, as claimed.  
\end{proof}

\para{Diffeomorphism case}
Our next goal is to prove the counterpart to Theorem \ref{thm:small_quotient_homeo} in the diffeomorphism case.
Here there are more small quotient subgroups -- for example the group of diffeomorphisms fixing a point with trivial first derivatives at that point also has finite codimension.  In general, we will see that one needs to consider $r$-jets at finite sets of points.  

 We start by recalling the notion of jet spaces.  For a smooth manifold $M$, an {\em r-jet} of a map $M \to M$ is an equivalence class of triples $(x, f, U)$, where $f: U \to M$ a $C^r$ map, $U$ is an open neighborhood of $x$, and $(x, f, U)$ is equivalent to $(x, g, U')$ if all derivatives at $x$ up to order $r$ of $f$ and $g$ agree.  The {\em space of $r$-jets of $C^r$ maps of $M$}, denoted $J^r(M, M)$, is a fiber bundle over $M \times M$, via the natural projection map assigning $(x, f, U)$ to $(x, f(x))$, with linear structure group.  

We will be interested in a related bundle where $M$ is a configuration space.  
\begin{defn} 
For a smooth manifold $M$, and finite set $X \subset M$, let $J^r(\Conf_n(M))$ denote the pullback of the bundle $J^r(\Conf_n(M), \Conf_n(M))$ under the diagonal map $\Conf_n(M) \to \Conf_n(M) \times \Conf_n(M)$.   We call this the {\em configuration r-jet bundle}.  
\end{defn} 

Since $\Diff^r(M)$ is naturally a subgroup of $\Diff^r(\Conf_n(M))$, there is a natural action of $\Diff^r(M)$ on $J^r(\Conf_n(M))$ by bundle automorphisms.  
We set notation for a point stabilizer of this action.  

\begin{defn} 
Let $\Stab^r(X) \subset \Stab(X) \subset \Diff^r(M)$ denote the point stabilizer of the equivalence class of the identity map at $X$ in $J^r(\Conf_{|X|}(M))$ under the natural action of $ \Diff^r(M)$ on $J^r(\Conf_{|X|}(M))$.  
We let $\Stab^r(X)_0$ denote the identity component of $\Stab^r(X)$.  
\end{defn} 
One should think of $\Stab(X)/\Stab^r(X)$ as the ``$r$-jets of diffeomorphisms at $X$'', we call the larger group $\Stab(X)/\Stab^r(X)_0$ an {\em extended jet group at $X$}.  
By definition, we have $J^r(\Conf_n(M)) = \Diff^r(M)/\Stab^r(X)$.  

Note also that, if $f \in \Stab^r(X)_0$, then $f$ pointwise fixes $X$, has all derivatives up to order $r$ agreeing with the identity map at each point of $X$, and is isotopic to the identity through a path of such maps.  Conversely, if a map has identity $r$-jet at each point of $X$ and is isotopic to the identity through a path of such maps, then it clearly lies in $\Stab^r(X)_0$.  Using this, we prove the following.  
 

\begin{lem} \label{lem:closure}
Let $r \geq 1$.  Then the closure of $\Diff^r_c(M-X)$ in $\Diff^r_c(M)$ is 
$\Stab^r(X)_0$.
\end{lem}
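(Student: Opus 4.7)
The proof naturally splits into two inclusions. For the easy direction, I would show $\Diff^r_c(M-X) \subset \Stab(X)^r_0$, from which the inclusion of closures follows since $\Stab(X)^r_0$, as the identity component of a topological group, is closed. Any $f \in \Diff^r_c(M-X)$ is identity in a neighborhood of $X$ (since its support is a compact subset of $M-X$), so extending by the identity gives a $C^r$ diffeomorphism of $M$ agreeing with the identity to all orders at each point of $X$, hence an element of $\Stab(X)^r$. Moreover, since $f$ is isotopic to the identity through a compactly supported isotopy in $M-X$, extension by identity produces an isotopy in $\Stab(X)^r$, placing $f$ in $\Stab(X)^r_0$.

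For the reverse inclusion $\Stab(X)^r_0 \subset \overline{\Diff^r_c(M-X)}$, the plan is to use a cutoff argument based on Taylor's theorem. Fix $g \in \Stab(X)^r_0$ and choose an isotopy $\{g_t\}_{t \in [0,1]}$ in $\Stab(X)^r$ from the identity to $g$. Around each $p \in X$ choose disjoint coordinate charts, and write $g_t(x) = x + h_t(x)$ in coordinates. The stabilizer condition says the identity and $g_t$ agree to order $r$ at $p$, so Taylor's theorem gives
\[
|D^k h_t(x)| \le C_t\, |x|^{r+1-k} \quad \text{for } k=0,1,\ldots,r,
\]
uniformly for $t$ in the compact interval $[0,1]$. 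Fix a smooth bump $\phi_\epsilon$ equal to $1$ on the ball $B(\epsilon/2)$ and supported in $B(\epsilon)$, with $|D^j \phi_\epsilon| = O(\epsilon^{-j})$, and define
\[
\tilde g_{t,\epsilon}(x) = x + (1-\phi_\epsilon(x))\, h_t(x)
\]
in each coordinate chart, extending by $g_t$ elsewhere. By construction $\tilde g_{t,\epsilon}$ is identity on $B(\epsilon/2)$ about each $p \in X$ and equals $g_t$ outside the $B(\epsilon)$'s.

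The key estimate, obtained from the Leibniz rule and the two bounds above, is
\[
\|g_t - \tilde g_{t,\epsilon}\|_{C^r} = \|\phi_\epsilon h_t\|_{C^r} = O(\epsilon),
\]
uniformly in $t \in [0,1]$. In particular the path $t \mapsto \tilde g_{t,\epsilon}$ is a $C^1$-small perturbation of the isotopy $\{g_t\}$ for $\epsilon$ small, so each $\tilde g_{t,\epsilon}$ is a diffeomorphism (a compactly supported $C^1$-small perturbation of a diffeomorphism). The path $\{\tilde g_{t,\epsilon}\}$ is then an isotopy from the identity to $\tilde g_{1,\epsilon}$ through diffeomorphisms that are identity on a neighborhood of $X$. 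Restricting to $M-X$, this is a compactly supported isotopy, so $\tilde g_{1,\epsilon}$ (extended by identity) lies in $\Diff^r_c(M-X)$. Letting $\epsilon \to 0$ shows $\tilde g_{1,\epsilon} \to g$ in $C^r$, so $g \in \overline{\Diff^r_c(M-X)}$.

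The main obstacle is the $C^r$ estimate of the cutoff: the exact matching of orders (derivatives of $h_t$ gain factors of $|x|^{r+1-k}$, derivatives of $\phi_\epsilon$ cost $\epsilon^{-j}$) is precisely what makes the Taylor vanishing to order $r+1$ balance the $r$-th order derivatives in the bump. This is the reason the theorem holds for $\Stab(X)^r$ and not, say, the group fixing $X$ only to lower order. A secondary technical point is continuity of the cutoff in the parameter $t$, which is automatic from the explicit formula but needs to be invoked to conclude that $\tilde g_{1,\epsilon}$ actually lies in the identity-isotopic component $\Diff^r_c(M-X)$ rather than merely being compactly supported on $M-X$.
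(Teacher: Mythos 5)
Your proof takes a genuinely different route from the paper's. The paper handles the hard inclusion by quoting the oriented blow-up construction: replacing each $p \in X$ by its sphere of directions produces a manifold with boundary to which elements of $\Stab(X)^r_0$ lift with trivial boundary action, and one deforms away from the boundary; the naturality details are outsourced to the cited reference. You instead argue directly with a cut-off and a Taylor estimate near $X$. Your approach is more elementary and self-contained; the paper's is shorter given the blow-up machinery.

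That said, your Taylor bound is too strong as stated. If $g_t$ is only $C^r$ and $h_t = g_t - \mathrm{id}$ has all derivatives up to order $r$ vanishing at $p$, Taylor's theorem with Peano remainder gives only
\[
|D^k h_t(x)| = o\bigl(|x-p|^{\,r-k}\bigr) \quad\text{as } x \to p, \qquad k = 0,\ldots,r,
\]
not $O(|x-p|^{r+1-k})$; the polynomial bound you wrote is the Lagrange-form estimate, which requires $h_t \in C^{r+1}$. In particular for $k=r$ you are asserting $D^r h_t$ is Lipschitz at $p$, which a general $C^r$ diffeomorphism need not satisfy. Feeding the correct $o$-estimate into your Leibniz computation yields $\|\phi_\epsilon h_t\|_{C^r} = o(1)$ rather than $O(\epsilon)$, which is still enough to conclude $\tilde g_{1,\epsilon} \to g$ in $C^r$, so the overall structure survives. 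You should also say a word about uniformity in $t$: continuity of $t \mapsto g_t$ in the compact-open $C^r$ topology over the compact interval $[0,1]$ makes $\{D^r h_t\}_t$ equicontinuous near $X$, which is precisely what makes the Peano $o(1)$ uniform in $t$. With these two repairs the argument is correct.
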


\begin{proof} 
By definition, we have $\Diff^r_c(M-X) \subset \Stab(X)^r_0$.  Since the latter group is closed, this gives $\overline{\Diff^r_c(M-X)} \subset 
\Stab^r(X)_0$. For the other direction, as we did for homeomorphisms, here one needs to approximate the endpoint a path of diffeomorphisms agreeing up to order $r$ with the identity at $X$ using one consisting of diffeomorphisms supported away from $X$.  This can be done by the standard ``blow up" construction, in which one replaces each point of $X$ with its sphere of tangent directions, giving a manifold with sphere boundary components, then gluing in balls to recover a manifold diffeomorphic to M (via a diffeomorphism close to the identity and identifying the center of each ball with a point of $X$).  There is a natural, continuous extension of $C^r$ diffeomorphisms of $M$ preserving $X$ to $C^{r-1}$ diffeomorphisms of the blow-up (see \cite[Theorem 6.1.]{Stark} for the statement before compactifying by spheres, one simply extends the linear diffeomorphisms on the sphere boundaries over the glued-in balls), the action being trivial on the tangent space to $X$ results in a diffeomorphism compactly supported away from $X$.  One may compensate for the loss of regularity using the fact that $C^{r-1}$ diffeomorphisms can be approximated by $C^r$ diffeomorphism. 
\end{proof} 




\begin{thm} \label{thm:small_quotient_diff} 
If $A \subset \Diff^r_c(M)$ is a closed subgroup with small quotient (for some $1 \leq r \leq \infty$) then 
$\Stab^r(X)_0 \subset A\subset \Stab(X)$ for a finite set $X\subset M$, and $\Diff^r_c(M)/A$ has the structure of a cover of $\Conf_{|X|}^r(M)$ under a quotient by a subgroup of extended jet group.\end{thm}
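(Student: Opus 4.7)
The plan is to mirror the proof of Theorem \ref{thm:small_quotient_homeo}, replacing $\Homeo_c(M)$ by $\Diff^r_c(M)$ throughout, and then separately to identify the additional linear structure coming from $r$-jets at the points of $X$. Throughout I assume $r$ lies in the range for which local simplicity of $\Diff^r_c$ is known, i.e.\ $r \neq \di(M)+1$, since Lemma \ref{lem:supp_ball} requires this hypothesis on $G$.

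First I would verify the hypotheses of Lemma \ref{lem:nontransitive} for $G = \Diff^r_c(M)$: local continuous transitivity is furnished by time-one maps of compactly supported smooth vector fields in a chart, and fragmentation is classical via partitions of unity (see \cite[Chapter 2]{Banyaga}). Applying Lemma \ref{lem:nontransitive} to $A$ yields a finite set $X \subset M$ setwise preserved by $A$, with $A$ acting transitively on $M - X$ (or on its components when $\di(M) = 1$); in particular $A \subset \Stab(X)$. Setting $A' := A \cap \Diff^r_c(M - X)$, Observation \ref{obs:easy}(2) shows $A'$ is small quotient in $\Diff^r_c(M - X)$, so Lemma \ref{lem:supp_ball} gives a ball $B \subset M - X$ with $\Diff^r_c(B) \subset A'$. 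Fragmentation together with transitivity of $A$ on $M - X$ (used to conjugate copies of $B$ across $M - X$) then yields $\Diff^r_c(M-X) \subset A$. Since $A$ is closed, Lemma \ref{lem:closure} delivers $\Stab(X)^r_0 \subset A$, giving the two-sided containment $\Stab(X)^r_0 \subset A \subset \Stab(X)$.

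To analyze the quotient, I would set $A_r := A \cap \Stab(X)^r$, so that $\Stab(X)^r_0 \subset A_r \subset \Stab(X)^r$. Because $\Stab(X)^r$ is locally connected (a consequence of local contractibility of $\Diff^r_c$ adapted to the trivial-$r$-jet condition at finitely many points, cf.\ \cite[Chapter 1]{Banyaga}), the closed subgroup $A_r$ contains $\Stab(X)^r_0$ as an open subgroup and $A_r/\Stab(X)^r_0$ is discrete; hence the projection
\[
\Diff^r_c(M)/\Stab(X)^r_0 \; \longrightarrow \; \Diff^r_c(M)/A_r
\]
is a covering map. Composing with $\Diff^r_c(M)/\Stab(X)^r = \Conf^r_{|X|}(M)$ exhibits $\Diff^r_c(M)/A_r$ as a cover of $\Conf^r_{|X|}(M)$. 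Finally, the inclusions $A_r \subset A \subset \Stab(X)$ induce an injection $A/A_r \hookrightarrow \Stab(X)/\Stab(X)^r \cong J^r_n(m)$, and this coset space acts on $\Diff^r_c(M)/A_r$ by right multiplication, fiberwise over $\Conf^r_{|X|}(M)$ (since $A/A_r$ preserves $\Stab(X)^r$-cosets). The quotient of this action is precisely $\Diff^r_c(M)/A$, giving the desired description as a cover of $\Conf^r_{|X|}(M)$ taken modulo a fiberwise action of the linear subgroup $A/A_r$ of $J^r_n(m)$; linearity follows since each $J^r(m)$ embeds as an algebraic subgroup of some $\GL_N$ via its faithful action on polynomial jets of degree $\leq r$.

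The main obstacle I foresee is the third step: establishing that $A_r/\Stab(X)^r_0$ is discrete, equivalently that $\Stab(X)^r$ is locally connected in $\Diff^r_c(M)$. In the $C^0$ case the authors invoked the deep Edwards--Kirby theorem; here one must cite or adapt the corresponding $C^r$ local contractibility results of Palais, Cerf and Banyaga, combined with the observation that prescribing a trivial $r$-jet at finitely many points cuts out a subgroup that remains locally connected. A secondary caveat is the exclusion $r \neq \di(M)+1$ inherited from Lemma \ref{lem:supp_ball}; in that regularity local simplicity remains the famous open problem, so the theorem as stated genuinely applies only in the range where local simplicity is known.
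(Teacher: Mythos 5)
Your argument for the generic case $r \neq \di(M)+1$ follows the paper's outline closely and is essentially correct: apply Lemmas \ref{lem:nontransitive} and \ref{lem:supp_ball} plus Lemma \ref{lem:closure} to pin $A$ between $\Stab(X)^r_0$ and $\Stab(X)$, set $A_r = A \cap \Stab(X)^r$, observe that $A_r$ is a union of components of $\Stab(X)^r$ so that $\Diff^r_c(M)/A_r \to \Conf^r_{|X|}(M)$ is a covering, and then quotient by the image of $A/A_r$ in the jet group $J^r_n(m)$. You are also right to flag that discreteness of $A_r/\Stab(X)^r_0$ rests on local connectedness of $\Stab(X)^r$; the paper takes this for granted, and it is a real (if mild) point to justify by the blow-up argument or by $C^r$ local contractibility.

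However, there is a genuine gap: you explicitly exclude the exceptional regularity $r = \di(M)+1$ and assert that the theorem "genuinely applies only" when local simplicity is known. That is not how the paper handles it, and it is not correct: the theorem as stated makes no such exclusion. The paper's proof covers $r = \di(M)+1$ by a workaround you have not reproduced. One considers the dense subgroup $\Diff^{r+1}_c(M) \subset \Diff^r_c(M)$ (equipped with the subspace topology), for which local simplicity \emph{is} known, and applies the small-quotient machinery to $A \cap \Diff^{r+1}_c(M)$ inside $\Diff^{r+1}_c(M)$ to obtain $\Stab(X)_0^{r+1} \subset A \cap \Diff^{r+1}_c(M) \subset \Stab(X)$. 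Then one uses that the closure of $\Stab(X)_0^{r+1}$ in $\Diff^r_c(M)$ is $\Stab(X)_0^r$, together with closedness of $A$ in $\Diff^r_c(M)$, to upgrade to $\Stab(X)_0^r \subset A \subset \Stab(X)$, after which the rest of the argument proceeds unchanged. Without this step your proof establishes a strictly weaker statement than the theorem claims.

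Two smaller points worth checking. First, when you invoke Observation \ref{obs:easy}(2) to pass to $A' = A \cap \Diff^r_c(M-X)$, make sure you state that $A'$ is closed in $\Diff^r_c(M-X)$ (so that Lemma \ref{lem:supp_ball} applies); this is because $A$ is closed and $\Diff^r_c(M-X)$ carries the appropriate topology. Second, when you write that $A/A_r$ "acts on $\Diff^r_c(M)/A_r$ by right multiplication, fiberwise over $\Conf^r_{|X|}(M)$," it is cleaner to phrase this, as the paper does, via the normality of $\Stab(X)^r$ in $\Stab(X)$: this implies $A_r$ is normal in $A$, so $A/A_r$ is a genuine group acting on the cover and the quotient is literally $\Diff^r_c(M)/A$.
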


\begin{proof}
To avoid the problem where $\Diff^r_c(M)$ is not known to be simple when $r = \di(M)+1$, we pass immediately to working with subgroups of smooth diffeomorphisms.  Let $A'$ be the closure in the $C^\infty$ topology of $A \cap \Diff^\infty_c(M)$ in $\Diff^\infty_c(M)$.  Note that $A' \subset A$ since the $C^\infty$ topology is finer than the $C^r$ topology.   By Lemmas \ref{lem:supp_ball}, Lemma \ref{lem:nontransitive} and the fragmentation property for $\Diff^\infty_c(M-X)$, there is a finite set $X$ such that $A' \subset \Stab(X)$ and $A'$ contains $\Diff^\infty_c(M - X)$.  The $C^r$ closure of $\Diff_c^\infty(M-X)$ contains $\Diff^r_c(M-X)$, so using Lemma \ref{lem:closure} we conclude that 
\[ \Stab^r(X)_0 \subset A \subset \Stab(X) \] 



Thus, $\Stab^r(X)_0  = \Stab^r(X)_0 \cap A$ and 
\[ \Diff^r_c(M)/(\Stab^r(X)_0 \cap A) = \Diff^r_c(M)/ \Stab^r(X)_0.\] 
Since $\Stab^r(X)/ \Stab^r(X)_0$ is discrete, $ \Diff^r_c(M)/ \Stab^r(X)_0$ is a cover of the configuration jet bundle $ \Diff^r_c(M)/ \Stab^r(X)$. 
The map $\Diff^r_c(M)/(\Stab^r(X)_0 \cap A) \to \Diff/A$ is a quotient by the action of $A/ \Stab^r(X)_0  \subset \Stab(X)/ \Stab^r(X)_0$ (noting that $\Stab^r(X)_0$ is normal in $A$).    

\end{proof}

\section{Orbit classification theorem} \label{sec:ac}

Building on the work in the previous section, we now prove Theorems \ref{thm:orbit_classification} and \ref{thm:lifting}.  We need the following version of the classical invariance of domain theorem. 
\begin{lem}[Invariance of domain for a finite-dimensional CW complex]\label{Invariance}
Let $P$ be an $n$-dimensional CW complex.  Then there is no injective continuous map $\mathbb{R}^{n+1} \to P$.
\end{lem}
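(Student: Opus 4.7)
My plan is to reduce to the classical dimension-theoretic fact that a compact metric space of covering dimension $\le n$ cannot contain a topologically embedded $(n+1)$-ball.

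The first step is to argue by contradiction: suppose there is a continuous injection $f: \mathbb{R}^{n+1}\to P$. Fix a closed ball $\bar{D}\subset \mathbb{R}^{n+1}$. Since $\bar{D}$ is compact and $P$ is Hausdorff, the continuous injection $f|_{\bar{D}}$ is automatically a homeomorphism onto its image; in particular $K:=f(\bar{D})$ is a compact subspace of $P$ homeomorphic to a closed Euclidean ball of dimension $n+1$.

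The second step is to localize to a well-behaved subcomplex. By the standard fact that any compact subset of a CW complex is contained in a finite subcomplex, there is a finite subcomplex $P'\subseteq P$ with $K\subseteq P'$ and $\dim P'\le n$. Because $P'$ is a finite CW complex, it is compact and metrizable, and its small inductive (equivalently, covering) dimension is at most $n$.

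The last step is to apply dimension theory to reach a contradiction. Covering dimension is monotone under subspaces of metric spaces, so the subspace $K\subseteq P'$ must satisfy $\dim K\le n$. On the other hand $K$ is homeomorphic to a closed $(n+1)$-ball, whose covering dimension equals $n+1$ by the classical invariance of dimension theorem. This contradicts the previous inequality and proves the lemma.

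The only non-formal ingredient here is monotonicity of dimension and the value of the dimension of a ball; everything else is routine point-set topology of CW complexes. The main (minor) subtlety to be careful about is that $P$ itself is not assumed to be locally finite or metrizable, which is why the reduction to a \emph{finite} subcomplex $P'$ is essential — without it, one could not directly invoke the standard metric-space dimension theory.
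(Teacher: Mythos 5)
Your proof is correct, but it takes a genuinely different route from the paper's. The paper's argument stays entirely inside elementary CW topology plus classical invariance of domain in Euclidean space: it restricts $f$ to a closed ball $D$, notes that $f(D)$ meets only finitely many cells, picks an open cell $C$ of \emph{maximal} dimension $m\le n$ among those, observes that $C$ is open in the $m$-skeleton (and $f(D)\subset P^{(m)}$), so that $f^{-1}(C)$ is a nonempty open subset of $\mathbb{R}^{n+1}$ mapped injectively and continuously into $C\cong\mathbb{R}^m$ with $m<n+1$, contradicting the Euclidean invariance of domain. Your argument instead passes to a finite subcomplex $P'$ and invokes dimension theory: that a finite $n$-dimensional CW complex is compact metrizable of covering dimension $\le n$, that covering dimension is monotone under subspaces of separable metric spaces, and that the $(n+1)$-ball has dimension $n+1$. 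Both are sound. The trade-off is that the paper's proof requires only Brouwer's invariance of domain plus the open-in-the-$m$-skeleton observation (making it essentially self-contained), whereas yours outsources the contradiction to dimension theory — in particular, the claim that $\dim P'\le n$ for a finite CW complex is a true but not-entirely-trivial fact (it needs something like the countable closed sum theorem applied to the skeleta), so you are trading one classical input for another. Your reduction to a finite subcomplex is a clean way to sidestep metrizability issues, which the paper handles implicitly by working only on the compact set $f(D)$; either device does the job.
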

\begin{proof}
Assume for contradiction that there is such an embedding $f: \mathbb{R}^{n+1}\to P$. Let $D$ be a closed disk in $\mathbb{R}^{n+1}$ and $B$ be the interior of $D$. The image $f(D)\subset P$ is compact so it only intersects finitely many cells. Find $x\in f(B)$ such that $x$ lands on the maximal dimensional cell $C(x)$ among the cells that intersect $f(D)$. By assumption on the maximal dimension, we know that $f^{-1}(C(x))$ is open in $\mathbb{R}^{n+1}$. This shows that there is an injective, continuous image of $\mathbb{R}^{n+1}$ inside $C(x)$, which contradicts the well known invariance of domain theorem for Euclidean spaces.
\end{proof}

The other ingredients we will need are a collection of automatic continuity results.   Following \cite{Hurtado}, call a homomorphism $\phi$ from $\Diff^r_c(M)$ or $\Homeo_c(M)$ to a topolgoical group $G$ {\em weakly continuous} if, for every compact set $K \subset M$, the restriction of $\phi$ to the subgroup of homeomorphisms supported on $K$ is continuous. 

\begin{thm}[Hurtado \cite{Hurtado}]
Let $M$ and $N$ be smooth manifolds.  Any homomorphism $\phi: \Diff^\infty_c(M) \to \Diff^\infty_c(N)$ is weakly continuous. 
\end{thm}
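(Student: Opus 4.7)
The plan is to prove weak continuity by a fragmentation-plus-commutator argument combined with a Baire category step, exploiting the Polish group structure on subgroups of diffeomorphisms with support in a fixed compact set, together with the distinctive ``infinite product'' trick that is available in $C^\infty$ regularity but fails in finite regularity.

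First I would fix a compact set $K \subset M$ and restrict attention to the subgroup $G_K \subset \Diff^\infty_c(M)$ of diffeomorphisms supported in $K$, which is a Polish group in the $C^\infty$ compact-open topology (completely metrizable and separable). By the homomorphism identity $\phi(fg) = \phi(f)\phi(g)$, it suffices to show continuity at the identity: for any $C^\infty$ neighborhood $V$ of $\mathrm{id}$ in $\Diff^\infty_c(N)$, the preimage $\phi^{-1}(V)$ should contain a $C^\infty$ neighborhood of $\mathrm{id}$ in $G_K$. Equivalently, every sequence $f_n \to \mathrm{id}$ in $G_K$, converging sufficiently rapidly in $C^\infty$, should satisfy $\phi(f_n) \to \mathrm{id}$.

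The central trick is to package the sequence $(f_n)$ into a single diffeomorphism whose image under $\phi$ is a well-defined group element that ``remembers'' each $f_n$. Choose an ambient diffeomorphism $a \in \Diff^\infty_c(M)$ that contracts a neighborhood of $K$ geometrically into itself, so that the conjugates $a^{n_k} f_k a^{-n_k}$ have pairwise disjoint supports shrinking exponentially to a point. Because we are in $C^\infty$ and the $f_k$ converge faster than any polynomial, one can choose the subsequence $n_k$ so that the infinite product $F = \prod_k a^{n_k} f_k a^{-n_k}$ converges in every $C^r$ norm simultaneously, defining an element of $\Diff^\infty_c(M)$ supported in a single compact $K' \supset K$. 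One then extracts each $f_k$ from $F$ by an explicit finite group-theoretic expression (a combination of conjugations by $a$ and commutators with a cutoff-like diffeomorphism) all of whose constituents lie in the Polish subgroup $G_{K'}$.

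Applying $\phi$, every $\phi(f_k)$ becomes an explicit bounded algebraic expression in $\phi(F)$, $\phi(a)$, and commutators of elements of $\phi(G_{K'})$. A Baire category argument using that $\phi$ restricts to a homomorphism between Polish groups then shows these expressions cannot avoid every neighborhood of $\mathrm{id}$ in $\Diff^\infty_c(N)$; concretely, the shrinking-support geometry on the source side transfers, via the commutator structure, to a shrinking-support conclusion on the target side, yielding $\phi(f_k) \to \mathrm{id}$ and hence weak continuity. The hard part will be the precise choice of dilation $a$ and subsequence $(n_k)$, together with the algebraic identity recovering $f_k$ from $F$: one needs the infinite product to converge \emph{exactly} in $C^\infty$ and the recovery formula to involve only a uniformly bounded number of group operations, so that applying $\phi$ yields a controlled expression. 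This is precisely the step that breaks at finite regularity, and it explains why the analogous statement for $\Diff^r_c$, $r<\infty$, remains open and must be imposed as a hypothesis elsewhere in the paper.
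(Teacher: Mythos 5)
Your sketch identifies several real ingredients of the argument in \cite{Hurtado} (working inside the Polish subgroup $G_K$, reducing to continuity at the identity, and packaging a rapidly converging sequence into a single $C^\infty$ diffeomorphism $F=\prod_k a^{n_k}f_k a^{-n_k}$ via geometrically contracting conjugates — this is the Mather/Avila/Militon infinite-product construction, and you are right that it is exactly the step that breaks at finite regularity). But the claim that each $f_k$ can be recovered from $F$ and finitely many auxiliary diffeomorphisms by ``a uniformly bounded number of group operations'' is not merely hard, it is impossible: the set of elements of word length $\leq L$ in a finitely generated subgroup is finite, while a nontrivial sequence $f_k \to \mathrm{id}$ contains infinitely many distinct elements. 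What the infinite product actually buys you (this is Militon's distortion theorem, the external input Hurtado cites) is that each $f_k$ has word length $O(\ell_k)$ in a fixed finite set $S$, where $\ell_k$ grows with $k$ but is controlled by how fast $f_k \to \mathrm{id}$ in $C^\infty$.

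Once you only have a \emph{growing} word-length bound, the soft Baire-category step no longer closes the argument. No Steinhaus-type property is known for $\Diff^\infty_c$ that would yield automatic continuity the way the Rosendal--Solecki and Mann arguments do for $\Homeo_c(M)$ — that is precisely why this theorem is a substantive result rather than a corollary of a general principle — and the assertion that ``shrinking-support geometry on the source transfers to a shrinking-support conclusion on the target'' is the thing to be proved, not an available hypothesis: an abstract homomorphism $\phi$ carries no a priori relation between supports in $M$ and supports in $N$. Hurtado replaces the Baire step with a quantitative estimate on the target side: he builds norm-like functionals on $\Diff^\infty_c(N)$ that are subadditive under products and controlled under conjugation, bounds the norm of $\phi(f_k)$ above by a function of the word length $\ell_k$ (applied to the finitely many elements $\phi(S)$), and plays this against the lower bound $\|\phi(f_k)\| \geq \epsilon$ that would hold along a subsequence if $\phi(f_k)\not\to\mathrm{id}$; choosing the convergence rate of $f_k$ in advance makes the first rate beat the second. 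So the overall architecture is on the right track, but both load-bearing steps need replacement: the ``uniform recovery formula'' by Militon's quantitative distortion estimate, and the Baire argument by Hurtado's norm estimates on $\Diff^\infty_c(N)$.
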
 
\begin{thm}[Mann \cite{Mann}, Rosendal \cite{Rosendal},  Rosendal--Solecki \cite{RS}] \label{thm:ac_homeo}
Let $M$ be a topological manifold and $G$ a separable topological group.  Any homomorphism $\phi: \Homeo_c(M) \to G$ is weakly continuous. 
\end{thm}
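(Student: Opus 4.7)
The plan is to prove automatic continuity via the \emph{Steinhaus property} framework. Recall that a Polish group $H$ has the $k$-Steinhaus property if, for every symmetric countably syndetic subset $W \subseteq H$ (meaning $H = \bigcup_{n} h_n W$ for some countable sequence $(h_n)$), the product set $W^k$ contains an open neighborhood of the identity. A short Baire-category argument shows that any Polish group with this property satisfies automatic continuity: every homomorphism to a separable topological group is continuous. To get weak continuity of $\phi : \Homeo_c(M) \to G$, it suffices by definition to check continuity of the restriction of $\phi$ to each Polish group $\Homeo_K(M)$ of homeomorphisms supported on a compact set $K \subseteq M$, so the problem reduces to verifying the Steinhaus property for each such $\Homeo_K(M)$.

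The verification of Steinhaus combines three ingredients. First, fragmentation \cite{EK}: every element of $\Homeo_K(M)$ close enough to the identity is a product of a bounded number of homeomorphisms each supported on a small ball, with the bound depending only on $K$ and the chosen ball cover. Second, a Baire-category step: if $W$ is symmetric and countably syndetic in the Polish group $\Homeo_K(M)$, then $W^2$ is nonmeager and hence any translate of $W^2$ that hits the interior gives some initial control. Third, a \emph{displacement/commutator trick}: for any small ball $B \subseteq K$, one picks a homeomorphism $h \in \Homeo_K(M)$ disjointly displacing $B$, so that for $f$ supported on $B$ the conjugate $hfh^{-1}$ has support in $h(B)$ and commutes with $f$. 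This lets one rewrite arbitrary small-support homeomorphisms as short words in conjugates drawn from $W$, absorbing them into a fixed power $W^k$. Together these three inputs show that $W^k$ contains an identity neighborhood for some $k$ depending only on $M$ and $K$, establishing Steinhaus.

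The central obstacle is \emph{uniformity}: the integer $k$ must be chosen independently of the particular $W$, which forces the combinatorial arrangement of fragmentation and commutator identities to be controlled entirely by geometric data (e.g.\ the covering number of $K$ by balls together with displacement homeomorphisms) rather than by properties of $W$. A further subtlety is that the displacement element $h$ itself must be produced from within a controlled subset of the group so that the relevant commutator identities all land inside a bounded power of $W$; this is managed by first enlarging $W$ to a fixed power $W^c$ that absorbs the auxiliary displacements and their inverses. Once Steinhaus is verified for every $\Homeo_K(M)$, the Polish-group automatic-continuity principle gives continuity of $\phi|_{\Homeo_K(M)}$ for each compact $K$, which is exactly the definition of weak continuity of $\phi$ on $\Homeo_c(M)$.
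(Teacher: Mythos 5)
The paper does not prove this theorem; it is quoted as a black box with citations to \cite{Mann}, \cite{Rosendal}, and \cite{RS}, so there is no in-paper proof to compare against. Your sketch does correctly reconstruct the strategy of the cited works: Rosendal--Solecki's reduction of automatic continuity for Polish groups to the Steinhaus property, and Mann's verification of Steinhaus for homeomorphism groups by combining fragmentation, a Baire-category step, and a commutator/displacement argument, with the uniformity of the exponent $k$ correctly identified as the crux. Two small points worth making explicit if this were to be written out in full: first, one must check that $\Homeo_K(M)$ (homeomorphisms of $M$ supported in a compact set $K$, which is not the same as $\Homeo_0(K)$ for a compact manifold $K$) is indeed Polish and that fragmentation and a ball-displacing element $h$ can be produced \emph{inside} $K$; second, the Steinhaus argument yields continuity of $\phi|_{\Homeo_K(M)}$ for each compact $K$, and the passage to ``weakly continuous'' as defined in Section~3 is then just the definition, but it does not upgrade to genuine continuity of $\phi$ on $\Homeo_c(M)$ when $M$ is noncompact, as the paper's example of conjugating by a ball-embedding shows.
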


If $M$ itself is compact, weak continuity is of course equivalent to continuity, but this is false in general.  For instance (as remarked in \cite{Hurtado}) if $h : \R^n \to \R^n$ is an embedding with image the open unit ball $B$, then conjugation by $h$ produces a homomorphism from $\Diff_c^\infty(\R^n)$ to $\Diff_c(B)$, and extending diffeomorphisms in the image to act by the identity outside of $B$ gives a discontinuous homomorphism  $\Diff_c^\infty(\R^n) \to \Diff_c^\infty(\R^n)$.    Nevertheless, weak continuity combined with our orbit theorem will suffice for our intended applications.

\begin{proof}[Proof of Theorem \ref{thm:orbit_classification}]
We begin with the homeomorphism group case, the diffeomorphism case being analogous. 

\para{Homeo case} Recall as usual that $M$ denotes a manifold without boundary.  Suppose that $\Homeo_c(M)$ acts nontrivially on a finite-dimensional CW complex $N$.  
 If $M$ is compact, then Theorem \ref{thm:ac_homeo} implies the action is continuous, so for any $x \in N$, the stabilizer $G_x$ of $x$ under the action is a closed subgroup of $\Homeo_c(M)$, and the orbit of $x$ gives a continuous, injective map from $\Homeo_c(M)/G_x$ into $N$.  By Lemma \ref{Invariance}, the stabilizer $G_x$ is a small quotient subgroup (of codimension at most $\di(N)$) so by Theorem \ref{thm:small_quotient_homeo}, $\Homeo_c(M)/G_x$ is homeomorphic to a cover of $\Conf_n(M)$ for some $n$.  
 
If $M$ is not compact, we need to employ an additional argument.  
To simplify notation, let $A = G_x$ be the point stabilizer of the action on $N$. The goal is again to show that there is a finite set $X\subset M$ such that 
\[
\Stab(X)_0 \subset A\subset \Stab(X),
\]
which implies that $A$ is a closed subgroup (since it is a union of components of $\Stab(X)$) and that the orbit space is a cover of $\Conf_{|X|}(M)$.  

Let $K\subset M$ be a connected, compact set that is the closure of a connected, open subset $\mathring{K}$ of $M$. Let $\Homeo_K(M) \cong \Homeo_c(K)$ be the subset of $\Homeo_0(M)$ consisting of elements with support in $\mathring{K}$ and isotopic to identity by an isotopy compactly supported in $\mathring{K}$. By weak continuity, the restriction of the action to $\Homeo_K(M)$ is continuous.  The group $A_K:=A\cap \Homeo_K(M)$, is a closed, small quotient subgroup of $\Homeo_K(M)$.  For a finite set $X \subset K$, let $\Stab_K(X)$ denote the stabilizer of $X$ in $\Homeo_K(M)$, and $\Stab_K(X)_0$ its identity component. By Theorem \ref{thm:small_quotient_homeo}, there exists a finite set $X(K) \subset K$ such that 
\[
\Stab_K(X(K))_0\subset A_K\subset \Stab_K(X(K)).
\]
Also, the cardinality of $X(K)$ multiplied by the dimension of $M$ is the dimension of the orbit of $x$ in $N$ under this subgroup, and this is bounded above by the dimension of $N$.  In particular, $|X(K)| \leq \di(N)$.  

We claim that whenever $K\subset H$ are both the compact closures of connected sets as above, we have $X(K)\subset X(H)$.  To show this, observe that $A_{H}\cap \Homeo_K(M) = A_{K}$, and we know that 
\[\Stab_H(X(H))_0\cap \Homeo_K(M) \subset A_H\cap\Homeo_K(M) =A_K \subset \Stab_K(X(K)).\]
However $\Stab_K(X(H)\cap K)_0\subset \Stab_H(X(H))_0\cap \Homeo_K(M)\subset \Stab_K(X(K))$, which implies that $X(K)\subset X(H)\cap K\subset X(H)$ because $\Stab_K(X(H)\cap K)_0$ acts on $\Int(K)-X(H)\cap K$ transitively.

Let $K_n$ be an exhaustion of $M$ by connected compact sets with $K_n \subset K_{n+1}$, each one the closure of a connected open set.  Then $X(K_n) \subset X(K_{n+1})$, and the fact that the cardinality of $X(K_n)$ is bounded by $\dim(N)$ implies that this sequence is eventually constant, equal to some finite set $X$.  Without loss of generality, we may modify our compact exhaustion so that $X(K_n) = X$ for all of the compact sets $K_n$.  
The above discussion shows that for all $K_n$, 
\[
\Stab_{K_n}(X)_0\subset A\cap \Homeo_{K_n}(M) \subset \Stab_{K_n}(X)
\]
This implies that $A\subset \Stab(X)$ since $A \subset \Homeo_c(M) = \bigcup_n \Homeo_{K_n}(M)$. On the other hand, we also have that $\Stab(X)_0=\bigcup \Stab_{K_n}(X)_0$ because the union of supports of elements in a path in $\Stab(X)_0$ is also compact, so we have $\Stab(X)_0 \subset A$, which is what we needed to show.  
\qed

\para{Diffeo case} 
The same argument above applies for any {\em continuous} action of $\Diff^r_c(M)$ on $N$ by homeomorphisms, and we conclude that the orbit is the continuous, injective image of a cover of one of the spaces given in Theorem \ref{thm:small_quotient_diff}.    If $M$ is compact, $r = \infty$, and the action is by smooth diffeomorphisms, then continuity follows immediately from Hurtado's theorem.  In the noncompact case for a weakly continuous action, one simply repeats the argument above using an exhaustion by compact sets.  
 \end{proof}

We can now give Theorem \ref{thm:transitive} as an elementary consequence. 
\begin{proof}[Proof of Theorem \ref{thm:transitive}]
Suppose $\Homeo_c(M)$ acts transitively on a manifold or  finite dimensional CW complex $N$, thus there is only a single orbit, and hence by the Orbit Classification Theorem, a continuous, bijective map from some cover $C$ of some configuration space $\Conf_n(M)$ to $N$.  Thus $\di(N) \geq \di(M)$, and if equality holds the orbit map is a homeomorphism.   To show equality holds (ruling out pathological behavior like a space filling curve), consider the restriction of the orbit map to some compact subset $K$ of $C$. This restriction is a homeomorphism onto its image, a compact subset of $N$; since $C$ can be exhausted by a countable collection of compact sets, by Baire category the image of $K$ must be somewhere dense, hence contain a ball and thus $\di(N) = \di(M)$.   
\end{proof}

As another easy consequence, we can prove Theorem \ref{thm:lifting}.

\begin{proof} 
Suppose $M$ and $N$ are manifolds with $\di(M) \geq \di(N)$, and we have a nontrivial action of $\Homeo_c(M)$ on $N$.  
Take any point $x \in N$ not globally fixed by the action, and let $G_x$ denote its stabilizer.  Then the orbit of $x$ gives a continuous, injective map of $\Homeo_c(M)/G_x$ into $N$.  By Theorem \ref{thm:small_quotient_homeo} and our assumption on dimension, the space $\Homeo_c(M)/G_x$ is a covering $M_x$ of $\Conf_1(M) \cong M$.   In particular, we must have $\di(M) \leq \di(N)$, hence equality holds.  Since $N$ is second countable, and orbits are disjoint, there can be at most countably many such disjoint embedded copies of covers of $M$, which proves the theorem.  
\end{proof} 

The same proof applies to any weakly continuous action of $\Diff^r_c(M)$ on $N$ by homeomorphisms.  In Section \ref{sec:diffeo_structure} we will improve this result for the case of actions $\Diff^r_c(M) \to \Diff^s(N)$ where $s \geq 1$, showing that in this case $s \leq r$ and $N$ must itself necessarily be a cover of $M$.

\section{Admissible covers: classifying orbit types} \label{sec:E_subgroup}
We now describe which spaces occur as orbits for actions of $\Homeo_c(M)$; equivalently, we classify the covers of $\Conf_n(M)$ that admit transitive actions of $\Homeo_c(M)$.  We keep the notation from the previous sections.  If $X \subset M$ is a finite set, there is a fiber bundle $\Stab(X) \to \Homeo_c(M) \to \Conf_{|X|}(M)$. From the long exact sequence of homotopy groups, we have the following exact sequence
\begin{equation} \label{eq:push}
\pi_1(\Homeo_c(M)) \xrightarrow{ev} \pi_1(\Conf_{|X|}(M))\xrightarrow{p} \pi_0(\Stab(X))\to 1
\end{equation}
where the evaluation map $ev$ takes a loop $f_t$ in $\Homeo_0(M)$ based at the identity to the path of configurations $f_t(X)$.  The map $p$ is analogous to the familiar ``point push" map from the Birman exact sequence in the study of mapping class groups of surfaces, in this special case, $X$ is a singleton, and a loop based at $X$ is sent to the isotopy class of a homeomorphism obtained by pushing the point around the loop.  
Let $E_{|X|}$ denote the image of $ev$.   From the long exact sequence of homotopy groups, we know that 
\[
\pi_1(\Conf_{|X|}(M))/E_{|X|}\cong \Stab(X)/\Stab(X)_0\]
and hence $\Homeo_c(M)/\Stab(X)_0  \to \Homeo_c(M)/\Stab(X) =  \Conf_{|X|}(M)$ is a covering map with deck group $\pi_1(\Conf_{|X|}(M))/E_{|X|}\cong \Stab(X)/\Stab(X)_0$.   

\begin{defn}
Let $C_X = \Homeo_c(M)/\Stab(X)_0$.   We call the covering space $\pi_X:C_X\to \Conf_{|X|}(M)$ the \emph{maximal admissible cover}, and call a cover $Z$ \emph{admissible} if it is an intermediate cover, i.e. it satisfies $C_X \to Z \to \Conf_{|X|}(M)$.
 \end{defn} 
\begin{prop}\label{lift}
Every admissible cover admits a transitive, continuous action of  $\Homeo_c(M)$.  Conversely, every orbit of a continuous action of $\Homeo_c(M)$ on another manifold is the image of an admissible cover under a continuous, injective map. 
\end{prop}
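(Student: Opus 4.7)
The plan is to combine standard covering space theory with Theorem \ref{thm:small_quotient_homeo}. The correspondence between admissible covers $Z$ of $\Conf_{|X|}(M)$ and subgroups $A$ with $\Stab(X)_0 \subseteq A \subseteq \Stab(X)$ is standard: given such an $A$, set $Z_A = \Homeo_c(M)/A$ and observe that the natural projections
\[
C_X = \Homeo_c(M)/\Stab(X)_0 \;\twoheadrightarrow\; Z_A \;\twoheadrightarrow\; \Homeo_c(M)/\Stab(X) = \Conf_{|X|}(M)
\]
realize $Z_A$ as an intermediate cover. Since $\Stab(X)_0$ is normal in $\Stab(X)$, the cover $C_X \to \Conf_{|X|}(M)$ is Galois with deck group $\Stab(X)/\Stab(X)_0$, and intermediate covers correspond exactly to subgroups of this deck group, i.e.\ to subgroups $A$ as above.

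For the first assertion, I would check that $A$ is closed and that the natural left multiplication action of $\Homeo_c(M)$ on $Z_A$ is continuous and transitive. Since $\Homeo_c(M)$ is locally connected by Edwards--Kirby and $\Stab(X)_0$ is the identity component of $\Stab(X)$, the quotient $\Stab(X)/\Stab(X)_0$ is discrete. The image of $A$ there is a subgroup of a discrete space, hence closed, so its preimage $A$ is closed in $\Stab(X)$, which is in turn closed in $\Homeo_c(M)$ as an intersection of (closed) point stabilizers. Therefore $\Homeo_c(M)/A$ is Hausdorff and the left multiplication action is continuous and transitive by construction.

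For the second assertion, the plan is to apply the small quotient theorem to the stabilizer of a point. Given a continuous action of $\Homeo_c(M)$ on a finite-dimensional CW complex $N$ and a point $x \in N$, the stabilizer $G_x$ is closed by continuity, and Lemma \ref{Invariance} shows that $G_x$ has small quotient of codimension at most $\di(N)$. Theorem \ref{thm:small_quotient_homeo} then yields a finite set $X \subset M$ with $\Stab(X)_0 \subseteq G_x \subseteq \Stab(X)$, so $\Homeo_c(M)/G_x$ is an admissible cover, and the orbit map $gG_x \mapsto g\cdot x$ is the required continuous injection into $N$.

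The main subtlety I expect is verifying the identification of $\Homeo_c(M)/A$, viewed as a topological quotient, with the admissible cover viewed as a topological covering space; this amounts to knowing that $\Homeo_c(M) \to \Conf_{|X|}(M)$ is a fiber bundle with fiber $\Stab(X)$ (the fact already used in deriving the exact sequence \eqref{eq:push}), so that intermediate quotients by subgroups between $\Stab(X)_0$ and $\Stab(X)$ coincide topologically with intermediate covers. Once this is granted, both directions become essentially formal consequences of Theorem \ref{thm:small_quotient_homeo}.
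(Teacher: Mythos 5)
Your proof is correct; the second half is essentially the same argument as the paper's. For the first half your route differs. The paper constructs the lifted action on a given intermediate cover $Z$ directly by path lifting: a path from $\mathrm{id}$ to $f$ in $\Homeo_c(M)$ is pushed down to a path in $\Conf_{|X|}(M)$ and then lifted to $Z$, with well-definedness guaranteed because every loop in $\Homeo_c(M)$ evaluates into $E_X\subset\pi_1(\Conf_{|X|}(M))$ and therefore lifts to a loop in any cover sitting below $C_X$. You instead identify $Z$ with a coset space $\Homeo_c(M)/A$ for an intermediate subgroup $\Stab(X)_0\subset A\subset\Stab(X)$ via the Galois correspondence for the covering $C_X\to\Conf_{|X|}(M)$, and obtain the action as left multiplication, making continuity and the homomorphism property automatic at the cost of two supplementary checks: that $A$ is closed (which you give via local connectedness), and that the quotient topology on $\Homeo_c(M)/A$ agrees with the covering-space topology on $Z$ (which, as you correctly flag, rests on the local triviality of $\Homeo_c(M)\to\Conf_{|X|}(M)$, the same fiber-bundle structure the paper uses to set up the exact sequence). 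The two routes are of comparable weight and both hinge on that bundle structure; yours makes continuity of the lifted action manifest, while the paper's bypasses the quotient-vs.-cover identification entirely. One small slip to repair: $\Stab(X)$ denotes the \emph{setwise} stabilizer, which is closed because it is the stabilizer of the point $X\in\Conf_{|X|}(M)$ under the continuous action, not because it is ``an intersection of point stabilizers'' (that description applies to the pointwise stabilizer).
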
 

\begin{proof}
An admissible cover $C_X\to Z\to \Conf_n(M)$ corresponds to a subgroup $A$ satisfying $\Stab(X)_0\subset A\subset \Stab(X)$. Therefore $\Homeo_c(M)$ naturally acts transitively (by left-multiplication) on $Z = \Homeo_c(M)/A$. 


To show the second half of the statement, if $Y$ is some orbit of an action of $\Homeo_c(M)$ and $G_y$ is the point stabilizer for some $y \in Y$, then $\Homeo_c(M)/G_y$ injects continuously onto $Y$, and by Theorem \ref{thm:small_quotient_homeo} we have $\Stab(X)_0\subset G_y \subset \Stab(X)$, so $\Homeo_c(M)/G_y$ is an admissible cover. 
\end{proof}

\para{Classification of admissible covers}
By Proposition \ref{lift}, classifying admissible covers reduces to the problem of            understanding the image of $ev$, which of course depends on the topology of $M$.   
In general, the image $E_X$ of $ev$ is contained in $\pi_1(\PConf_{|X|}(M)) \subset \pi_1(\Conf_{|X|}(M))$.  It is also a {\em central} subgroup of  $\pi_1(\Conf_{|X|}(M))$, since if $\gamma$ is a loop in $\pi_1(\Conf_n(M))$ based at $X = \{x_1, \ldots x_n\}$, and $f_t$ a loop in $\Homeo_0(M)$, then $f_t(\gamma)$ gives an isotopy between $\gamma$ and $ev(f_t) \gamma ev(f_t)^{-1}$, so these two elements of $\pi_1(\Conf_n(M))$ agree.   (See also \cite[Lemma 3.10]{Hurtado}.)

While it is not clear whether any general classification results hold beyond this observation on the center, 
in low dimensions it is possible to give a complete description of which admissible covers may occur:  

\begin{itemize}
\item \textbf{Dimension 1.}  It is an easy exercise to see that $\pi_1(\Homeo_0(S^1))=\mathbb{Z}$ and also $\pi_1(\PConf_n(S^1))=\mathbb{Z}$. 
(Recall that by convention $\PConf_n(S^1)$ denotes the configuration space of $n$ cyclically ordered points on $S^1$.).  
Thus, all admissible covers are intermediate covers of the covering $\PConf_n(S^1)\to \Conf_n(S^1)$. 

\item \textbf{Higher genus surfaces.} 
For a surface $S_g$ of genus $g>1$, Hamstrom \cite{Hamstrom} showed that $\Homeo_0(S_g)$ is contractible.  Therefore $E_X=1$ for any $X$ and all covers of $\Conf_n(S_g)$ are admissible. 

\item \textbf{2-sphere.}  By Kneser \cite{Kneser} we have $\pi_1(\Homeo_0(S^2))\cong \pi_1(SO(3))=\mathbb{Z}/2$.
For $n \geq 3$, the center of $\pi_1(\Conf_n(S^2))$ is $\mathbb{Z}/2$ (see \cite[\S 9.1]{Primer}), which is exactly the image of $ev$.  

\item \textbf{2-torus.} We have $\pi_1(\Homeo_0(T^2))\cong \pi_1(T^2)=\mathbb{Z}^2$ (see \cite{Hamstrom2}).  As remarked above, the induced map $\pi_1(T^2)\to \pi_1(\Homeo_0(T^2))\overset{ev} \to \pi_1(\PConf_n(T^2))$ has image in the center of $\pi_1(\Conf_n(T^2))$.  It is injective, since composition with the forgetful map (forgetting all but one point of $x$) gives a map $\pi_1(\PConf_n(T^2)) \to \pi_1(\PConf_1(T^2)) = \pi_1(T^2)$ on which this is clearly an isomorphism.   Moreover, since kernel of this map $\pi_1(\PConf_n(T^2))\to \pi_1(T^2)=\mathbb{Z}^2$ is center-free, we can conclude that the image of $ev$ is the center of $\pi_1(\Conf_n(T^2))$.  
\end{itemize} 

The observation that the image of $ev$ is the center of $\pi_1(\Conf_n(T^2))$ discussed above gives a different proof of the following theorem of Birman.  
\begin{cor}[Birman, see \cite{torusbraid} Proposition 4.2 and \cite{Birman}]
The center of $\pi_1(\Conf_n(T^2))$ is $\mathbb{Z}^2$.
\end{cor}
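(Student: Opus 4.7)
The goal is to show both inclusions: the image of $ev$ (which has already been shown to be an injective copy of $\mathbb{Z}^2$ sitting inside $Z(\pi_1(\Conf_n(T^2)))$) accounts for the entire center. So the plan is to prove the reverse inclusion $Z(\pi_1(\Conf_n(T^2))) \hookrightarrow \mathbb{Z}^2$.

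First, I would pass from $\Conf_n(T^2)$ to $\PConf_n(T^2)$ using the short exact sequence
\[
1 \to \pi_1(\PConf_n(T^2)) \to \pi_1(\Conf_n(T^2)) \xrightarrow{\sigma} S_n \to 1.
\]
For $n \geq 3$, the center $Z(S_n)$ is trivial, so any $z \in Z(\pi_1(\Conf_n(T^2)))$ must satisfy $\sigma(z)=1$, placing $z$ in $\pi_1(\PConf_n(T^2))$. Since $z$ centralizes all of $\pi_1(\Conf_n(T^2))$, it in particular centralizes the normal subgroup $\pi_1(\PConf_n(T^2))$, so $z \in Z(\pi_1(\PConf_n(T^2)))$.

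Next, I would invoke the Fadell--Neuwirth-type forgetful fibration
\[
1 \to K \to \pi_1(\PConf_n(T^2)) \xrightarrow{F_*} \pi_1(T^2) \to 1,
\]
where $K \cong \pi_1(\PConf_{n-1}(T^2 \setminus \{*\}))$ is the center-free kernel already invoked in the preceding paragraph. If $z \in Z(\pi_1(\PConf_n(T^2)))$ lies in $K$, then $z$ lies in $Z(K)$, which is trivial; hence $F_*|_{Z(\pi_1(\PConf_n(T^2)))}$ is injective. This gives
\[
Z(\pi_1(\Conf_n(T^2))) \subseteq Z(\pi_1(\PConf_n(T^2))) \hookrightarrow \pi_1(T^2) = \mathbb{Z}^2,
\]
which combined with the reverse inclusion already established completes the proof.

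The main obstacle is really only verifying that the kernel $K$ is center-free; this is the ingredient the preceding discussion already takes for granted, and is a standard fact about pure braid groups on surfaces of positive genus with a puncture, provable by iterating the Fadell--Neuwirth fibration to express $K$ as an iterated extension whose base cases are fundamental groups of nontrivial punctured surfaces (hence free, and center-free once the surface is not a disk or annulus). A secondary point is handling the small cases $n = 1, 2$: for $n=1$, $\pi_1(\Conf_1(T^2)) = \mathbb{Z}^2$ is already abelian; for $n=2$, the center of $S_2$ is $\mathbb{Z}/2$, so the $\sigma$-image of $z$ could be nontrivial, but then $z^2$ lies in $\pi_1(\PConf_2(T^2))$ and is central, and the same injectivity argument applied to $z^2$ forces $z$ itself into $\pi_1(\PConf_2(T^2))$ after checking that no $\mathbb{Z}/2$-torsion can appear (which follows because the image of $ev$ is torsion-free and any putative central element outside it would yield torsion in $Z(\pi_1(\PConf_2(T^2)))/\mathrm{im}(ev) \subset \mathbb{Z}^2/\mathbb{Z}^2$).
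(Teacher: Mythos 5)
Your proof follows essentially the same route as the paper: use the image of $ev$ as a central $\mathbb{Z}^2$, invoke injectivity of the forgetful map $\pi_1(\PConf_n(T^2))\to\pi_1(T^2)$ restricted to that image, and use center-freeness of the kernel of the forgetful map to bound the center above. The one place you add detail that the paper glides over is the passage from $\Conf_n$ to $\PConf_n$ via $Z(S_n)=1$ for $n\ge 3$; that is a genuine (if small) step worth making explicit.

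However, your handling of the $n=2$ case has a real gap. You need to rule out a central $z$ with $\sigma(z)=\tau$ (the transposition), and you argue this by claiming such a $z$ ``would yield torsion in $Z(\pi_1(\PConf_2(T^2)))/\mathrm{im}(ev)$,'' but a non-pure $z$ does not lie in $\pi_1(\PConf_2(T^2))$ at all, so it contributes nothing to that quotient. Knowing $z^2\in\mathrm{im}(ev)$ does not by itself force $z$ to be pure: abstractly, $Z(\pi_1(\Conf_2(T^2)))$ could contain $\mathbb{Z}^2$ with index 2 and still be torsion-free. A correct argument would instead observe that if $z$ is central then conjugation by $z$ is the trivial automorphism of the normal subgroup $P_2(T^2)=\pi_1(\PConf_2(T^2))$, yet the outer action of $\tau$ on $P_2(T^2)$ (swapping the two strands) is nontrivial --- for instance it acts nontrivially on $H_1(P_2(T^2))$, where inner automorphisms act trivially --- so no such $z$ can exist. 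For $n\ge 3$ and $n=1$ your argument is sound.
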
 

\para{Higher dimensions}
When $\di(M)\ge 3$, we know that $\pi_1(\PConf_n(M)) \cong \pi_1(M, x_1)\times...\times \pi_1(M, x_n)$ where $X = \{x_1, \ldots x_n\}$. Fixing a path between $x_1$ and $x_i$ gives an identification of $\pi_1(M, x_1)$ with $\pi_1(M, x_i)$, and under this identification, the definition of $ev$ as the evaluation map implies that its image in $\pi_1(\PConf_n(M)) \subset \pi_1(\Conf_n(M))$ lies in the diagonal subgroup 
\[ \{ (g, g, \ldots, g) \in \pi_1(M)\times...\times \pi_1(M) : g \in E_1\}. \]

In special cases, the geometry of $M$ can give some additional insight into the image of $E_1$.  
For instance, if $M$ is a compact manifold admitting a metric of negative curvature, then $\pi_1(M)$ is center-free, so every cover is admissible.  By contrast, if $M$ is a compact Lie group, the left multiplication action gives $M<\Homeo_0(M)$, so the image $E_1$ is everything. Thus, if $\di(M)\ge 3$ and $M$ is a compact Lie group, then $E_n$ is the whole diagonal.

\begin{rem}[Quotients of $\Diff^r_c(M)$]
While the work in this section focused on homeomorphism groups, it is an equally interesting question to classify the finite dimensional spaces which occur as quotients of $\Diff^r_c(M)$ by small quotient closed subgroups.  Theorem \ref{thm:small_quotient_diff} and our work in the Homeo case reduces this to a problem (though a rather nontrivial one) about understanding quotients of jet groups.   We hope to pursue this in future work. 
\end{rem}

\section{Generalized flat bundle structure for actions by homeomorphisms}  \label{sec:bundle}
This section gives the proof of Theorem \ref{flatbundle}.  Recall that this is the statement that for connected manifolds $M$ and $N$ with $\di(N)<2\di(M)$, if there is an action of $\Homeo_c(M)$ on $N$ without global fixed points, then $N$ has the structure of a generalized flat bundle over $M$, and when $\di(N)-\di(M)<3$, the fiber $F$ is a manifold as well.  
Since $\Conf_n(M)$ has dimension $n\di(M)$, the assumption that $\di(N)<2\di(M)$ eliminates the possibility that any orbit is a cover of a configuration space of two or more points.  This is what makes the orbit gluing problem tractable and leads to the relatively simple statement of the theorem.  

\begin{defn} Let  $B,F$ be topological spaces.  A space $E$ is a called a (generalized) flat bundle with base space $B$ and fiber $F$ if there exists a homomorphism $\phi: \pi_1(B)\to \Homeo(F)$ such that \[
E=(\tilde{B}\times F)/\pi_1(B)\]
where the quotient is by the diagonal action of $\pi_1(B)$ via deck transformations on the universal cover $\tilde{B}$ and by $\phi$ on $F$.
\end{defn} 
Each generalized flat bundle is, in particular, a topological $F$-bundle over $B$ via the map $p:E\to B$ induced by projection to the first factor in $\tilde{B}\times F$.   If $B$ and $F$ are topological manifolds, then this is just the usual definition of a topological {\em flat} or {\em foliated} bundle.  If $B$ and $F$ are smooth manifolds and $\phi$ is an action by diffeomorphisms, this agrees with the differential geometric notion of $E$ admitting a flat connection.  We now prove the structure theorem stated in the introduction.

\begin{proof}[Proof of Theorem \ref{flatbundle}]
Let $\rho: \Homeo_c(M)\to \Homeo(N)$ be an action without global fixed points. Since an orbit is a subset of $N$ which has dimension strictly less than $2\di(M)$ (by the assumption in the Theorem statement), no orbit can be a cover of $\Conf_n(M)$ for $n>1$. Therefore every orbit is the image of a cover of $\Conf_1(M) = M$. 

We first define the projection map $p: N \to M$ and fiber $F$. Let $y\in N$ be any point, and let $G_y \subset \Homeo_c(M)$ denote the stabilizer of $y$ under the action of $\rho$.   Then $G_y$ is a closed subgroup of $\Homeo_0(M)$ with small quotient, so there exists a unique point $x\in M$ such that $\Stab(x)_0 \subset G_y$ by Theorem \ref{thm:small_quotient_homeo}.  We set $p(y) = x$.  

If $f \in \Homeo_c(M)$, then $\Stab(f(x))_0 = f \Stab(x)_0 f^{-1} \subset G_{\rho(f)(y)}$, so the map $p$ satisfies the $\rho$-equivariance property
\[ p \rho(f)(y) = f(p(y)) \] 
for all $y \in N$ and $f \in \Homeo_0(M)$.   We now show that $p$ is continuous.  To see this, let $B$ be a small open ball in $M$, and let $H(B)$ denote the group of homeomorphisms supported on $B$ and isotopic to the identity through an isotopy supported on $B$.  Let $\Fix(\rho(H(B))) \subset N$ denote the set of points fixed by every element of $\rho(H(B))$; this is an intersection of closed sets so is closed.    We claim that
\[ 
p^{-1}(B)=N - \Fix(\rho(H(B)))
\]
which is an open set, so showing this claim proves continuity.  To prove the claim, let $y \in p^{-1}(B)$, and take any $f \in H(B)$ such that $fp(y) \neq p(y)$.  Then $y \notin \Fix(\rho(f))$.  Conversely, if $y \notin p^{-1}(B)$ and $f \in H(B)$, then $f \in \Stab(p(y))_0$ so $\rho(f)$ fixes $y$.  This gives the desired equality of sets, and we conclude $p$ is continuous.  

Now fix a base point $b\in M$ and let $F:=p^{-1}(b)$.
Since $\rho(\Stab(b))$ preserves $F$, the action $\rho$ defines a representation $\Stab(b) \to \Homeo(F)$.  
Also, for any $y \in F$, by definition of $F$, we have $\rho(\Stab(b)_0)\subset G_y$ (recall, this is the stabilizer of $y$ under $\rho$), so this representation factors through $\Stab(b)/\Stab(b)_0$, giving a homomorphism
\[
\phi: \Stab(b)/\Stab(b)_0\to \Homeo(F).
\]
Let $E_b \subset \pi_1(M)$ be the image of the evaluation map for $X = \{b\}$ as defined in Section \ref{sec:E_subgroup}.  We are abusing notation here slightly, writing $E_b$ in place of the more cumbersome $E_{\{b\}}$.  Let $\Gamma:= \Stab(b)/\Stab(b)_0\cong \pi_1(M)/E_b$. 
 Let $C_b$ be the associated maximal admissible cover of $M$, and let $\pi: C_b \to M$ be the natural projection.  $\Gamma$ acts on $C_b$ by deck transformations, and on $F$ by $\phi$.  

We claim that 
$N$ is naturally homeomorphic to $(C_b \times F)/\Gamma$, with orbits of the action of $\Homeo_c(M)$ corresponding to the images of the ``horizontal" sets $C_b \times \{y\}$ in the quotient.  
To see this, fix a basepoint $\tilde{b}$ in $\pi^{-1}(b)$.  Let $L: \Homeo_c(M) \to \Homeo(C_b)$ denote the lifted action of $\Homeo_c(M)$ on $C_b$ as in Proposition \ref{lift}.   For $y \in F$ and $m \in C_b$ let $f_{m} \in \Homeo_c(M)$ denote any homeomorphism satisfying  $L(f_{m})(m)=\tilde{b}$, and define a map $(C_b \times F) \to N$ by 
\[(m, y) \mapsto \rho(f_{m})^{-1}(y).\]
This is independent of the choice of homeomorphism $f_{m}$ since if $g_m$ is another such choice, then $g_m =h f_m$ for some $h \in \Stab(b)_0$.   
We claim that this descends to a well-defined map 
 \[ I: (C_b \times F)/\Gamma \to N\]
and that this map is a homeomorphism giving a flat bundle structure to $N$.

To show $I$ is well defined, we need to show that if $a \in \Gamma$, then $I(a(m), \phi(a)(y)) = I(m, y)$.  
Let $\hat{a} \in \Stab(b)$ be a coset representative of $a \in \Gamma = \Stab(b)/\Stab(b)_0$.  Set $f_{a(m)} = \hat{a} \circ f_m$ (recall that $L(f_m)$ commutes with the deck group of $C_b$), then  
we have 
\[I(a(m), \phi(a)(y)) = \rho(f_{m})^{-1}\rho(\hat{a}) ^{-1}(\phi(\hat{a})(y)) = I(m,y)\]
since by definition, $\phi|_{\Stab(b)}$ agrees with $\rho|_{\Stab(b)}$ on $F$.    

To show injectivity of $I$, note that by construction, orbits of $\rho$ are the images of level sets $C_b \times \{y\}$ in the quotient, and if $\rho(f_m)^{-1}(y)=\rho(f_{m'})^{-1}(y')$, for some $m, m' \in C_b$ and $y, y' \in F$, then $f_{m'}^{-1} f_m \in \Stab(b)$.  To simplify notation, let $g=f_{m'}^{-1} f_m$.  Then $\phi(g)(y)=y'$ and $L(g)(m)=m'$. This means that $(m,y)$ is equivalent to $(m',y')$ in the quotient $(C_b \times F)/\Gamma$, so $I$ is injective.    Surjectivity follows from the fact that each orbit intersects $F$ in at least one point, and the lifted action on $C_b$ is transitive.  
Continuity of $I$ and its inverse follows from the fact that, locally, the homeomorphisms $f_m$ can be chosen continuously with respect to $m$.

Now $(C_b \times F)/\Gamma$ is easily seen to be a flat bundle over $M$ with fiber $F$, as we may write it as 
$(\tilde{M} \times F)/ \pi_1(M)$ where the action of $\pi_1(M)$ is by deck transformations on the first factor, and by the composite action $\pi_1(M) \to \pi_1(M)/E_b \to \Homeo(F)$ of $\pi_1(M)$ on $F$.  This gives the desired flat bundle structure on $N$.  

To conclude the proof, we need to assert that $F$ is a manifold in the low codimension case.  Note that 
in general, the fiber $F=p^{-1}(x)$ may not be a manifold.  For a concrete example, Bing's ``dog bone space" is a non-manifold space $F$ such that $F \times \R$ is homeomorphic to $\R^4$.  Then we may take $M = \R^n$, for $n \geq 3$ and a product action of $\Homeo_c(M)$ on $F \times M \cong \R^{n+3}$, for which the fiber will be $F$.  

However, since the product of the fiber $F$ with a ball is a manifold, we can conclude that $F$ is a {\em generalized manifold} or {\em homology manifold} in the sense of \cite[Chapter 8]{Wilder}. In the case where $M$ has codimension $1$ or $2$, all homology manifolds are manifolds (see \cite[Theorem 16.32]{Bredon}) so $F$ is necessarily a manifold.  See \cite[Chapter 8]{Wilder}.  
\end{proof}

Theorem \ref{flatbundle} has an analog for diffeomorphism groups, and we will see that such pathological fibers do not occur in the differentiable setting.  However, before proving this, we use the homeomorphism group version to solve the extension problem and derive some additional consequences.    We return to work with diffeomorphism groups in Section \ref{sec:diffeo_structure}.

\section{Application: extension problems for homeomorphism groups}  \label{sec:extension} 
In this section we discuss the extension problem, as introduced in \cite{Ghys} and further discussed in \cite{KB}, in the case of homeomorphism groups. 
Recall from the introduction that, if $W$ is a compact, connected manifold with $\partial W=M$, there is a natural ``restrict to the boundary" map
\[
\res(W,M): \Homeo_0(W)\to \Homeo_0(M),
\]
which is surjective, and the extension problem asks whether $\res(W,M)$ has a group theoretic section.  Here, and in the proof, we drop the superscript $0$ from $\res^0(W,M)$ since the context of homeomorphism groups is understood.   We always assume that $M$ is connected.  
We will prove the following stronger version of Theorem \ref{coning}.  As before, we let $E_x$ denote the fundamental group associated to a maximal admissible cover for a singleton $\{x\}$.  

\begin{thm}\label{thm:coning_general}
Let $M$ be a connected, closed manifold of dimension at least $2$ and assume that $\pi_1(M)/E_x$ admits no nontrivial action on $[0,1)$.  Then for any compact $W$ with $\partial W = M$, the map $\res(W,M)$ has a section if and only if $M = S^n, W=\mathbb{D}^{n+1}$.  Furthermore, in the case  $M = S^n$ any extension, and in fact {\em any action} of $\Homeo_0(M)$ on $W$, is conjugate to the standard coning. 
\end{thm}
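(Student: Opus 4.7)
The plan is to analyze the $\Homeo_0(M)$-action on $W$ via the Orbit Classification Theorem and the Structure Theorem \ref{flatbundle}, then combine the hypothesis on $\pi_1(M)/E_x$ with compactness of $W$ to force a cone structure.

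Suppose first that $\rho: \Homeo_0(M) \to \Homeo_0(W)$ is an action whose restriction to $\partial W$ is tautological (automatic for a section; the general case is reduced to this below). Automatic continuity (Theorem \ref{thm:ac_homeo}) makes $\rho$ continuous, and since $\di(W) = \di(M)+1 < 2\di(M)$, the Orbit Classification Theorem forces every orbit to be either a fixed point or a cover of $\Conf_1(M) = M$. Let $\Fix = \Fix(\rho)$ and $Y = W \setminus \Fix$; note $\partial W \subset Y$ and $Y$ is open in $W$. Applying Theorem \ref{flatbundle} to (each component of) $Y' = Y \cap \Int(W)$ yields a flat $F$-bundle structure $Y' \cong (\tilde{M} \times F)/\pi_1(M)$ with $F$ a 1-manifold without boundary. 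The case $F = S^1$ is ruled out since $Y'$ would then be compact and clopen in the connected noncompact $\Int(W)$. So $F = \R$. The projection $p: Y \to M$ from the proof of Theorem \ref{flatbundle} extends across $\partial W$ as the identity, so attaching $\partial W$ at the endpoint of each fiber makes $Y$ a flat $[0, \infty)$-bundle over $M$ with monodromy fixing the endpoint. Since $\Homeo([0, \infty)) \cong \Homeo([0, 1))$, the hypothesis forces this monodromy to be trivial, giving $Y \cong M \times [0, \infty)$ with $\partial W = M \times \{0\}$.

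Since $W$ is compact, $\Fix$ must compactify the single end $M \times \{\infty\}$ of $Y$. For $W$ to be a compact manifold with connected boundary $M$, this compactification must collapse the end to a single fixed point (any positive-dimensional fixed set would either leave $\partial W$ disconnected, as in $W = M \times [0,1]$, or fail to be locally Euclidean at the compactification set). Hence $W$ is the topological cone $CM$, which is a manifold if and only if $M \cong S^n$, giving $W = \DD^{n+1}$ and $\rho$ the standard coning by construction. The main obstacle here is the careful manifold-topology analysis ruling out exotic compactifications, invoking the classical fact that a cone $CM$ is locally Euclidean at its apex iff $M \cong S^n$.

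For the ``any action'' claim when $M = S^n$, let $\rho: \Homeo_0(S^n) \to \Homeo_0(\DD^{n+1})$ be nontrivial. Orbit classification combined with $\pi_1(S^n) = 1$ (for $n \geq 2$) makes every orbit a fixed point or a copy of $S^n$, and the induced boundary action is a homomorphism $\Homeo_0(S^n) \to \Homeo_0(S^n)$ which by Theorem \ref{thm:lifting} is either trivial or a conjugate of the identity. To rule out the trivial-boundary case, the structure analysis above applied to $Y = \DD^{n+1} \setminus \Fix$ yields $Y \cong S^n \times \R$ (a trivial flat bundle since $\pi_1(S^n) = 1$) embedded openly in $\DD^{n+1}$; continuity of $\rho$ across the boundary of $Y$ in $\DD^{n+1}$ (which lies in $\Fix$) would require the tautological $\Homeo_0(S^n)$-action on nearby $S^n$-orbits to match the trivial action on the fixed limiting sphere, forcing $\rho$ itself to be trivial, a contradiction. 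So the boundary action is conjugate to the identity, and conjugating $\rho$ by a homeomorphism of $\DD^{n+1}$ extending this (via a collar of $\partial \DD^{n+1}$) reduces to the section case, whence $\rho$ is conjugate to the standard coning.
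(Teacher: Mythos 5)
The overall framing — apply Theorem \ref{flatbundle} to $W - \Fix(\rho)$, use the hypothesis on $\pi_1(M)/E_x$ to force trivial monodromy on the boundary component of the fiber, and conclude that the relevant component is a product $M \times [0,\infty)$ — matches the paper's proof exactly. The divergence, and the gap, is in the step where you claim that compactness of $W$ forces the end of $M \times [0,\infty)$ to be compactified by a single point. You assert this follows from manifold topology alone, namely that a positive-dimensional fixed set would "either leave $\partial W$ disconnected \dots or fail to be locally Euclidean at the compactification set." That assertion is false. Take $M = T^2$ and $W = \DD^2 \times S^1$, the solid torus. Then $Y := W \setminus (\{0\} \times S^1) \cong (\DD^2 \setminus \{0\}) \times S^1 \cong T^2 \times (0,1]$ is an open subset whose frontier in $\Int(W)$ is a circle, $W$ is locally Euclidean everywhere, and $\partial W = T^2$ is connected. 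So the topology of a compact manifold with boundary alone does not rule out a higher-dimensional compactifying set.

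What actually rules out such configurations — and what the paper proves — is a dynamical statement: Claim \ref{sameA} (the shrinking property) shows that the horizontal slices $\emb(r_n \times M)$ Hausdorff-converge to a \emph{single} point $\alpha$, and this crucially uses continuity of the action $\rho$ near the global fixed point $\alpha$ together with compactness of $M$ (one finds $f_k \to f$ with $\rho(f_k)$ moving wayward points of the slice into any prescribed neighborhood of $\alpha$, using that all derivative/orbit data must degenerate at a fixed point). Combined with the elementary separation argument (Claim \ref{int}), this shows $W$ is the one-point compactification of $M \times (0,1]$, after which Proposition \ref{onepoint} (via Poincaré) gives $M = S^n$. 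Your proof needs an argument of this kind; the purely topological claim cannot be salvaged. Note also that the final "any action" claim for $M=S^n$ still needs the shrinking property to rule out the trivial-boundary case — your sketch there appeals to "continuity across the boundary" in essentially the right spirit, but again the precise mechanism is the Hausdorff convergence of $S^n$-orbits to a point, which would make the limit $n$-sphere $\partial W$ impossible as a frontier of $Y$.
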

In fact, the proof applies also when $W$ is noncompact, showing that $W$ is either equal to $\mathbb{D}^{n+1}$, or to $M \times [0,1)$ with an action conjugate to the obvious action preserving each leaf $M \times \{x\}$.  
As an immediate consequence, we have the following examples.  

\begin{cor} 
The following manifolds have no section of $\res(W, M)$, for any manifold $W$ with boundary $M$: 
\begin{enumerate}
\item Any manifold $M \neq S^n$ such that the maximal admissible cover of $M$ is a finite cover, such as when $M$ is a compact Lie group (see the discussion at the end of Section 4) .
\item Any manifold $M$ such that $\pi_1(M)$ itself has no nontrivial action on $[0, 1)$, for example when $\pi_1(M)$ is
\begin{enumerate} 
\item an arithmetic lattice of higher $\mathbb{Q}$-rank (Witte-Morris \cite{Witte}), or
\item a group generated by torsion elements, such as the mapping class groups of a surface, a reflection group, etc.
\end{enumerate} 
\end{enumerate} 
\end{cor}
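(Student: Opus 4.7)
The plan is to combine the Orbit Classification Theorem and the flat bundle structure theorem (Theorem \ref{flatbundle}) with the hypothesis on $\pi_1(M)/E_x$ to force any section $\rho: \Homeo_0(M) \to \Homeo_0(W)$ to be the standard coning.

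First I would reduce to a flat bundle picture. Since $\dim W = \dim M + 1 < 2\dim M$ (using $\dim M \geq 2$), Theorem \ref{thm:orbit_classification} forces every $\rho$-orbit to be either a global fixed point or the image of a cover of $M$. Applying Theorem \ref{flatbundle} to each connected component of the open manifold $U := \Int(W) \setminus \Fix$, each is a generalized flat bundle over $M$ whose fiber is a $1$-manifold (a disjoint union of $\R$'s and $S^1$'s) and whose $\pi_1(M)$-holonomy factors through $\pi_1(M)/E_x$. Using weak continuity of $\rho$, the bundle projection extends continuously to $V := U \cup \partial W$ by $p|_{\partial W} = \id$. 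For a section, $\partial W = M$ is the tautological orbit, corresponding to an added boundary point $b$ on an $\R$-component of the fiber of the component of $V$ containing $\partial W$; the resulting extended fiber is $\hat F_0 \cong [0, \infty)$, and $b$ is fixed by $\pi_1(M)/E_x$.

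Next I would use the hypothesis to trivialize the bundle. The hypothesis immediately gives trivial action on a half-neighborhood of $b$ in $\hat F_0 \cong [0, \infty)$; a maximality argument extends this over all of $\hat F_0$, since any interior endpoint $z$ of the trivial region is fixed by $\pi_1(M)/E_x$ (by continuity), and the hypothesis then trivializes both half-neighborhoods of $z$, contradicting maximality. Hence the corresponding component of $V$ is the trivial product $V_0 := M \times [0, \infty)$ with action $\rho(f)(x, t) = (f(x), t)$. For any limit point $z$ of $V_0$ in $W$ as $t \to \infty$, the $\rho$-orbit of $z$ consists of such limit points (by equivariance) and lies outside $V_0$; since $V_0$ and any other component of $V$ are pairwise disjoint open sets, $z \notin V$, so $z \in \Fix$, and the orbit classification forces the orbit to be a singleton. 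Hence $\bar V_0 = V_0 \cup \{z\} \cong C(M)$ is a cone on $M$.

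The manifold condition on $W$ at $z$ forces the link of $z$ to be a sphere, and the link of the apex of $C(M)$ is $M$, pinning down $M = S^n$ and $\bar V_0 \cong \mathbb{D}^{n+1}$. The same link argument rules out any other component of $V$ adjoining $z$: two cones $C(M) \cup C(M')$ meeting at their apices would have disconnected link $M \sqcup M'$, failing the manifold condition. Any component $V_i$ of $V$ disjoint from $V_0$ would, using $\pi_1(S^n) = 1$, be a trivial bundle $S^n \times \R$ whose two ends close to distinct fixed points, producing an embedded suspension $\Sigma S^n = S^{n+1}$—a closed $(n+1)$-dimensional subspace of $W$, hence all of $W$ by invariance of domain and connectedness, contradicting $\partial W \neq \emptyset$. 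Thus $W = V_0 \cup \{z\} = \mathbb{D}^{n+1}$ and $\rho$ is the standard coning. For the final assertion on arbitrary nontrivial actions of $\Homeo_0(S^n)$ on $W$, the restriction to $\partial W = S^n$ is either trivial—in which case the same $\Sigma S^n = S^{n+1}$ obstruction forces $\rho$ to be trivial, a contradiction—or, by Theorem \ref{thm:lifting}, conjugate to the identity; in the latter case, extending the conjugating homeomorphism to $W$ via surjectivity of $\res$ reduces to the section case. The main obstacle will be the topological analysis near a fixed-point end $z$: verifying via the link argument that no other component of $V$ can approach $z$ and that $\bar V_0$ therefore contains a full $W$-neighborhood of $z$, so that $\bar V_0$ is both open and closed. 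Together with the maximality argument trivializing $\hat F_0$ (which essentially uses the hypothesis on $\pi_1(M)/E_x$), these force the rigid structure $W = C(M) = \mathbb{D}^{n+1}$ with $M = S^n$.
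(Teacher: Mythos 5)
You are in effect re-proving Theorem \ref{thm:coning_general}, from which the corollary is immediate: a finite group acts trivially on $[0,1)$ (every orientation-preserving homeomorphism of $[0,1)$ other than the identity has infinite order), so in both items the deck group $\pi_1(M)/E_x$ admits no nontrivial action on $[0,1)$, and Theorem \ref{thm:coning_general} then rules out a section for $M\neq S^n$. Re-deriving the theorem is a fine strategy, and your outline parallels the paper's: pass to the flat bundle over $M$ with one-dimensional fiber, trivialize the holonomy on the component of the fiber meeting $\partial W$ using the hypothesis on $\pi_1(M)/E_x$, and identify the resulting component with a cone. (One small detour: the hypothesis applies directly to all of $\hat F_0\cong[0,\infty)\cong[0,1)$, so your maximality step is unnecessary.)

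There is, however, a genuine gap in passing from $V_0\cong M\times[0,\infty)$ to $\bar V_0\cong C(M)$. Knowing that each limit point $z$ of $V_0$ as $t\to\infty$ is a global fixed point does not tell you that $z$ is unique, nor that neighborhoods of $z$ absorb the tails $M\times(T,\infty)$; a priori the slices $M\times\{t\}$ could stretch and accumulate on a larger compact subset of $\Fix(\rho)$. Both conclusions require the shrinking property: use continuity of $\rho$ and compactness of $M$ to show that if $(x_n,t_n)\to z$ with $t_n\to\infty$, then the whole slices $M\times\{t_n\}$ Hausdorff-converge to $\{z\}$. This is precisely Claims \ref{sameA} and \ref{int} in the paper, and it is what identifies $W$ as the one-point compactification of $M\times[0,\infty)$. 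You assign this job to a ``link argument,'' but the manifold condition at $z$ only constrains the topology of $W$ near $z$ (the link is a sphere); it says nothing about how the open set $V_0$ approaches $z$ unless one already knows $V_0$ fills a deleted neighborhood of $z$. As written, the argument is circular: concluding $M=S^n$ from the link of $z$ presupposes $\bar V_0\cong C(M)$, which is exactly what the shrinking property provides. Once that step is supplied, the rest of your argument — ruling out other components of $V$, identifying the cone with $\mathbb{D}^{n+1}$, and the reduction to the section case for arbitrary nontrivial actions on $W$ — goes through along the lines of the paper.
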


\begin{rem}
In contrast with Theorem \ref{thm:coning_general}, when $M = S^1$ and $W = \mathbb{D}^2$ there are infinitely many different, non-conjugate extensions.  These are discussed and classified in Section \ref{sec:Militon} below. 
\end{rem}  


\begin{proof}[Proof of Theorem \ref{thm:coning_general}]
Assume that $\rho$ is a section for $\res(W,M)$.  
Define \[W'=\text{the connected component of }W-\Fix(\rho(\Homeo_0(M))) \text{ containing $\partial W$}.\] By Theorem \ref{flatbundle}, there is a canonical flat bundle structure $F\to W'\to M$, where $W'$ is foliated by orbits of the action of $\rho(\Homeo_0(M))$. Since $\dim(W) - \dim(M) = 1$, we know that the fiber $F$ is a 1-dimensional manifold.  
Since $W'$ is connected and the bundle has a section (given by $\partial W$), it follows that $F$ is connected, with connected boundary, and therefore equal to $(0,1]$.  Thus, $W'$ is homeomorphic to $(0,1]\times M$.
By hypothesis, the action of $\pi_1(M)$ on $F$ is trivial, so we have natural coordinates in which $\rho$ is the product of the trivial action on 
 $(0,1]$ and the standard action of $\Homeo_0(M)$ on $M$.  Let $em: (0,1]\times M\to W$  denote the embedding with image $W'$ used to give this coordinate identification.  


Fix $x \in M$, and let $r_n \to 0$ be a sequence in $(0,1]$.  Since $W$ is compact, after passing to a subsequence if needed we may assume that $em(r_n,x)$ converges to some point $\alpha \in W$. 
Note that $\alpha$ is necessarily a fixed point of $\rho$, since it lies on the boundary of a connected component of the open set $W - \Fix(\rho)$.  

\begin{claim}[Shrinking property]\label{sameA}
For any closed ball $U$ around $\alpha$ in $W$, there exists $n_0$ such that $em(\{ r_n\} \times M)\subset U$ for all $n>n_0$.
\end{claim}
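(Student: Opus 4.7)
The plan is to upgrade the pointwise information $em(r_n, x) \to \alpha$ to uniform (Hausdorff) convergence of the horizontal slices $em(\{r_n\} \times M)$ to $\alpha$, by combining the equivariance $\rho(f)(em(r, y)) = em(r, f(y))$ with automatic continuity. By Theorem~\ref{thm:ac_homeo} the homomorphism $\rho$ is continuous, and since $W$ is compact Hausdorff this promotes to joint continuity of the action map $\Homeo_0(M) \times W \to W$.

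The next step is to reduce the uniform question on $W$ to a local question on $M$ by exploiting continuous local sections of the evaluation map $\Homeo_0(M) \to M$, $g \mapsto g(x)$. Cover $M$ by finitely many open balls $V_1, \ldots, V_k$, each contained in a coordinate chart, and choose slightly smaller open balls $V'_i$ with $\overline{V'_i} \subset V_i$ and $\bigcup_i V'_i = M$. For each $i$, fix $g_i \in \Homeo_0(M)$ sending $x$ to the center $y_i$ of $V_i$, and for $y \in V_i$ let $\tau_y$ be the time-$1$ map of a compactly supported isotopy inside the chart carrying $y_i$ to $y$, chosen to depend continuously on $y$ by a standard bump-function construction. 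Then $s_i(y) := \tau_y \circ g_i$ is a continuous map $V_i \to \Homeo_0(M)$ with $s_i(y)(x) = y$.

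To conclude, fix an arbitrary open neighborhood $U$ of $\alpha$ in $W$, and for each $i$ consider the continuous map
\[
\Phi_i \colon \overline{V'_i} \times W \longrightarrow W, \qquad (y, w) \longmapsto \rho(s_i(y))(w).
\]
Since $\alpha$ is a global fixed point of $\rho$, we have $\Phi_i(y, \alpha) = \alpha$ for all $y \in \overline{V'_i}$, so joint continuity together with compactness of $\overline{V'_i}$ yields an open neighborhood $N_i$ of $\alpha$ in $W$ such that $\Phi_i(\overline{V'_i} \times N_i) \subset U$. Set $N = \bigcap_i N_i$. Since $em(r_n, x) \to \alpha$, for all sufficiently large $n$ we have $em(r_n, x) \in N$, and then for any $y \in M$, choosing $i$ with $y \in V'_i$, the equivariance of $em$ gives
\[
em(r_n, y) = em(r_n,\, s_i(y)(x)) = \rho(s_i(y))(em(r_n, x)) = \Phi_i(y, em(r_n, x)) \in U,
\]
which is exactly the desired conclusion.

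The main technical point is the construction of the continuous local sections $s_i$; this is routine but deserves care, as it rests on transitivity of $\Homeo_0(M)$ together with the fact that a small translation within a coordinate chart extends to a compactly supported homeomorphism of $M$ depending continuously on the translation parameter. Once the sections are in hand, the rest is a short compactness argument combining joint continuity of $\rho$ with the $\rho$-equivariance of $em$.
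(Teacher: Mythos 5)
Your proof is correct. The ingredients are the same as the paper's — continuity of $\rho$ (promoted to joint continuity of the action map on the compact $W$), compactness of $M$, $\rho$-equivariance of $em$, and the fact that $\alpha$ is a global fixed point — but the architecture is different. The paper argues by contradiction: it extracts a sequence $x_{n_k}$ with $em(r_{n_k},x_{n_k})\notin U$, uses compactness of $M$ to pass to a limit $y$, and then ``pushes'' $x_{n_k}$ to the fixed coordinate $x$ by a sequence of homeomorphisms $f^{-1}f_k \to \mathrm{id}$, deriving a contradiction with continuity. You instead argue directly: you package the ``pushing'' as continuous local sections $s_i$ of the evaluation map, apply the tube lemma on each compact $\overline{V_i'}$ to get a uniform neighborhood $N_i$ of $\alpha$, and intersect finitely many of these. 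The direct route gives slightly more — an explicit neighborhood $N$ of $\alpha$ such that $em(r,x)\in N$ forces $em(\{r\}\times M)\subset U$ — and avoids the mild imprecision in the paper's final ``for sufficiently large $f_k$ \ldots $\notin U$'' step (which really wants a further subsequence so that $em(r_{n_k},x_{n_k})$ converges in the compact set $W\setminus U$). The paper's route is shorter because it never has to spell out the local sections as continuous maps; it only needs to exhibit the sequence $f_k$.
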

\begin{proof}[Proof of claim]
The proof uses the continuity of the action and that $M$ is compact. Let $U$ be a closed ball about $\alpha$.  Suppose for contradiction that $U$ does not contain $em(r_n\times M)$ for all large $n$, so there exists a sequence $x_{n_k} \in M$ with $em(r_{n_k} ,x_{n_k})\in W' - U$.  Since $M$ is compact, after passing to a subsequence we may assume that $x_{n_k}$ converges to a point $y\in M$. 
Take some $f\in \Homeo_0(M)$ such that $f(x)=y$.   
Since $x_{n_k} \to y$, we may find a convergent sequence of homeomorphisms $f_k \to \mathrm{id}$ with the property that $f_k(x_{n_k}) = y$, and so $f^{-1}f_k(x_{n_k}) = x$.

Then $\rho(f_k)em(r_{n_k}, x_{n_k}) = em(r_{n_k}, y)$, and 
$\rho(f_k)em(r_{n_k}, x_{n_k}) = \rho(f) em(r_{n_k}, x)$, which converges to $\rho(f) \alpha = \alpha$.  
Since $f_k$ converges to identity as $k\to \infty$, and the action is continuous, we have 
$em(r_{n_k}, x_{n_k})$ also converges to $\rho(f) \alpha = \alpha$, a contradiction.  
\end{proof}

Let $B$ be any open ball around $\alpha$ and let $S$ be the boundary of $B$.  We call a connected, codimension one closed submanifold $X \subset W$ {\em separating} if $W-X$ has two components.  If $X$ is separating, we call the component of $W -X$ containing $\partial W$ the {\em exterior} component $\Ext(X)$, and the other component the {\em interior}, $\Int(X)$.  
In particular, $B=\Int(S)$ and $W-\bar{B}=\Ext(B)$. We also know that $em(r_n\times M)$ is separating and $\Ext(em(r_n\times M))=em((r_n,1]\times M)$, since these lie in a tubular neighborhood of the boundary given by the image of $em([r_n, 1] \times M)$ while $\Int(em(r_n\times M))=W-em([r_n,1]\times M)$. The following easy claim implies that whenever $em(r_n\times M)\subset B$, we have that $\Int(em(r_n\times M))\subset B$.
\begin{claim}\label{int}
If $X,Y$ are disjoint, separating manifolds in a manifold $W$ with boundary, and $Y\subset \Int(X)$, then $\Int(Y)\subset \Int(X)$.
\end{claim}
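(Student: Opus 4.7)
The plan is to unpack the definitions of $\Int$ and $\Ext$ for separating submanifolds and chase the components. The key point is that $X$ is connected (part of the standing definition of ``separating'' used just above the claim), and $X$, $Y$ are disjoint codimension-one closed submanifolds of $W$.

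First I would establish the containment $\Ext(X)\subset \Ext(Y)$. Since $Y\subset \Int(X)$, we have $Y\cap \Ext(X)=\emptyset$, so $\Ext(X)$ is a connected subset of $W-Y$ and is therefore contained in one of the two components $\Int(Y)$ or $\Ext(Y)$. Because $\Ext(X)$ meets $\partial W$ and $\Ext(Y)$ is by definition the component containing $\partial W$, we get $\Ext(X)\subset \Ext(Y)$. Next, since $X$ is a connected submanifold disjoint from $Y$, $X$ also lies in a single component of $W-Y$; and as $X\subset \overline{\Ext(X)}\subset \overline{\Ext(Y)}$ while $X\cap Y=\emptyset$, we conclude $X\subset \Ext(Y)$.

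Now I would use this to locate $\Int(Y)$. The set $\Int(Y)$ is a connected component of $W-Y$; it is disjoint from $Y$ by definition, and disjoint from $X$ by the previous paragraph (since $X\subset \Ext(Y)$ and $\Int(Y)\cap \Ext(Y)=\emptyset$). Hence $\Int(Y)$ is a connected subset of $W-X$, so it is contained in exactly one of the components $\Int(X)$ or $\Ext(X)$. To decide which, I would use that $\Int(X)$ is open in $W$ and contains $Y$: a collar neighborhood of $Y$ in $W$ therefore lies inside $\Int(X)$, and one side of this collar meets $\Int(Y)$. Thus $\Int(Y)\cap \Int(X)\neq\emptyset$, forcing $\Int(Y)\subset \Int(X)$, as claimed.

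I do not expect a serious obstacle here; the argument is really just a component-chasing exercise once one observes the two containments $\Ext(X)\subset \Ext(Y)$ and $X\subset \Ext(Y)$. The only mildly delicate point is verifying that $\Int(Y)$ actually meets $\Int(X)$ rather than $\Ext(X)$, which is handled by the collar neighborhood observation using openness of $\Int(X)$ and the containment $Y\subset \Int(X)$.
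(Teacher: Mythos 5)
Your proof is correct and uses the same core idea as the paper's: $\Ext(X)$ is connected and disjoint from $Y$, so it lies in a single component of $W-Y$, which must be $\Ext(Y)$ since both contain $\partial W$. Your final collar argument is slightly over-engineered — once you have $\Ext(X)\subset\Ext(Y)$, it follows immediately that $\Int(Y)\cap\Ext(X)\subset\Int(Y)\cap\Ext(Y)=\emptyset$, and combined with your observation that $\Int(Y)\cap X=\emptyset$ this already gives $\Int(Y)\subset\Int(X)$.
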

\begin{proof}
First, $(W - Y) \cap \Ext(X) = \Ext(X)$ which is connected.  Then $\Int(Y) \cap \Ext(X)$ and $\Ext(Y) \cap \Ext(X)$ partition $(W - Y) \cap \Ext(X) = \Ext(X)$ into two connected components, so one of these sets must be empty.  But $\Ext(X) \cap \Ext(Y)$ contains $\partial W$.  Thus, $\Int(Y) \cap \Ext(X) = \emptyset$, so $\Int(Y)\subset \Int(X)$. 
\end{proof}
Summarizing, from Claim \ref{sameA}, we obtain that for any ball $B$ around $\alpha$, there exists $n_0$ such that $em(r_n\times M)\subset B$ for $n>n_0$. By Claim \ref{int}, we know that $\Int(em(r_n\times M))\subset B$ for $n>N$.  Using this, we deduce the following: 

\begin{claim}\label{ball}
$W=W'\cup \{\alpha\}$ and $W$ is the one point compactification of $(0,1]\times M$.
\end{claim}
\begin{proof}
Each space $em(r_n\times M)$ separates $W$ into two components where $em(s\times M)\subset \Int(em(r_n\times M))$ for $s < r_n$ and $em((t,1]\times M)= \Ext(em(t\times M))$. Therefore $\Fix(\rho(\Homeo_0(M))\cap \Ext(em(t\times M)) = \emptyset$. For any ball $B$ around $\alpha$, there exists $n$ such that $\Int(em(r_n\times M))\subset B$ by the shrinking property. Therefore, $\Fix(\rho(\Homeo_0(M))\cap B = \{\alpha\}$,  which shows that $W=W'\cup \{\alpha\}$, and the topology of $\overline{W'}$ agrees with the one-point compactification topology or Alexandroff extension of $W'$ by the shrinking property.

\end{proof}
The first statement of Theorem \ref{thm:coning_general} now follows from the following proposition. 
\begin{prop}\label{onepoint}
The one point compactification of $M\times (0,1]$ is a manifold if and only if $M$ is the sphere.
\end{prop}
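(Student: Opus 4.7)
The plan is to verify both directions. For the ``if'' direction, when $M = S^n \subset \R^{n+1}$, the map $(x, t) \mapsto tx$ extends to a continuous bijection from the one-point compactification of $S^n \times (0,1]$ to $\mathbb{D}^{n+1}$ by sending $\alpha \mapsto 0$; since the source is compact and the target Hausdorff, this is a homeomorphism.

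For the converse, suppose $W := (M \times (0,1]) \cup \{\alpha\}$ is an $(n+1)$-manifold, where $n = \dim(M) \geq 1$. I first claim $\alpha$ lies in the manifold interior of $W$. The boundary $\partial W$ is a closed $n$-manifold which, along the product locus, contains $M \times \{1\}$ (the product structure supplies half-space coordinate charts there). Since $M$ is connected, $M \times \{1\}$ is a clopen component of $\partial W$, and the open set $\{\alpha\} \cup M \times (0, 1/2)$ is a neighborhood of $\alpha$ disjoint from $M \times \{1\}$. So if $\alpha \in \partial W$, it would be an isolated point of $\partial W$, contradicting the $n$-manifold property for $n \geq 1$. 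Thus $\alpha$ admits a Euclidean neighborhood $U \subset W$ with $U \cong \R^{n+1}$ sending $\alpha \mapsto 0$.

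With this in hand, apply Mayer--Vietoris and van Kampen to the open cover $A := M \times (0, 1]$, $B := U$, for which $A \cap B = U \setminus \{\alpha\} \cong \R^{n+1} \setminus \{0\} \simeq S^n$. The crucial input is that $W$ itself is contractible, via the explicit deformation $H((x, t), s) = (x, (1 - s)t)$ for $s < 1$, extended by $H(\cdot, 1) \equiv \alpha$ and $H(\alpha, s) = \alpha$; continuity at $s = 1$ is exactly the statement that every neighborhood of $\alpha$ in the one-point compactification contains $\{\alpha\} \cup M \times (0, \varepsilon)$ for small $\varepsilon > 0$. Since $A \simeq M$, $B$ is contractible, $A \cap B \simeq S^n$, and $W$ is contractible, Mayer--Vietoris immediately yields $H_*(M) \cong H_*(S^n)$, and for $n \geq 2$ van Kampen applied to the same cover gives $\pi_1(M) = 1$. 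Hence $M$ is a simply-connected homology $n$-sphere, i.e.\ a homotopy $n$-sphere, and the topological Poincar\'e conjecture concludes $M \cong S^n$; the $n = 1$ case is immediate from the classification of closed $1$-manifolds.

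The main obstacle is the appeal to the topological Poincar\'e conjecture in dimensions $3$ and $4$ (due to Perelman and Freedman respectively); every other ingredient is straightforward algebraic topology combined with the definition of the one-point compactification topology.
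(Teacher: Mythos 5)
Your proof is correct, and it takes a genuinely different (and in places more careful) route than the paper's. Both arguments reduce to the Poincar\'e conjecture via the claim that $M$ is a simply connected homology sphere, but they reach that claim differently. The paper computes the local homology groups $H_*(W, W - \{\alpha\})$ directly and combines this with the observation that $W$ is a cone on $M$, hence contractible, to conclude $M$ is a homology sphere; then, for simple connectivity, it removes $\alpha$ from the simply connected $W$ and uses the fact (via general position, since $\dim W \geq 3$) that deleting a point preserves $\pi_1$. You instead run Mayer--Vietoris and van Kampen simultaneously on the cover $\{A, B\}$ with $A \cap B \simeq S^n$, using the same contractibility input. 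The two computations are essentially equivalent (by excision, the local homology the paper uses is exactly what your $B$/$A\cap B$ pair encodes), but your version packages the $\pi_1$ statement and the $H_*$ statement in one setup rather than two. You also supply two steps the paper glosses over: the verification that $\alpha$ cannot lie on the boundary of $W$ (which is needed before invoking a full Euclidean chart at $\alpha$ in either approach, since $M \times (0,1]$ does have the boundary component $M\times\{1\}$), and the explicit check of the ``if'' direction. Both are worth making explicit. One small remark: the paper's phrasing ``$H_k(N,N-\infty;\Z)$ are $\Z$ for $k=0$'' appears to be a typo (relative $H_0$ vanishes here); your Mayer--Vietoris version sidesteps this.
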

The proof of this Proposition is a standard consequence of the Poincar\'e conjecture.  We recall the outline of the argument for completeness.  Suppose that $N$ is the one-point compactifiction $M\times (0,1] \sqcup \{\infty\}$ and is assumed to be a manifold.  Then the local homology groups $H_k(N, N-\infty; \Z)$ are $\Z$ for $k=0$ and $k= \dim(M)+1$ and equal to $0$ otherwise. This implies that $M$ is a homology sphere.  Since $N$ is homeomorphic to the cone on $M$, it is simply connected; since $M$ is homotopic to $N - \{ \infty \}$ it is also simply connected provided that $\dim(M) > 1$ (in the one dimensional case, the Proposition is trivial by the classification of $1$-manifolds). The Poincar\'e conjecture then implies that $M$ is a sphere, giving the result of the proposition.  

To conclude the proof of the theorem, we need to show that $\rho$ is conjugate to the standard coning when $M = S^n$.  However, this conjugacy is already given by $em$ on $M \times [0,1) = W - \{\alpha\}$, and extends over to the one-point compactifications of these spaces, which is a global fixed point for $\rho$ and for the coning action.  
\end{proof}

Since the $2$-dimensional torus has no nontrivial admissible cover, we obtain the following corollary.
\begin{cor}
$\res(H_g,S_g)$ does not have a section when $g=1$.
\end{cor}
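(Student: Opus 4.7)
The plan is to derive this corollary as an immediate application of Theorem \ref{thm:coning_general}, with $M = S_1 = T^2$ and $W = H_1$ the solid torus. The ambient dimension condition ($\dim M \geq 2$) is satisfied, so the only thing to check is the algebraic hypothesis that $\pi_1(T^2)/E_x$ admits no nontrivial action on $[0,1)$.

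I would verify this hypothesis by recalling the ``2-torus'' bullet point in Section \ref{sec:E_subgroup}. The translation subgroup $T^2 \hookrightarrow \Homeo_0(T^2)$ induces a map $\pi_1(T^2) \to \pi_1(\Homeo_0(T^2))$ which is an isomorphism by Hamstrom \cite{Hamstrom2}. Composing with the evaluation $ev$ at any base point $x \in T^2$ sends a translation loop $t \mapsto \gamma(t)$ to its orbit $t \mapsto \gamma(t) \cdot x$, a loop homotopic to $\gamma$ itself. Hence $ev$ surjects onto $\pi_1(T^2)$, giving $E_x = \pi_1(T^2)$ and so $\pi_1(T^2)/E_x$ is the trivial group. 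The trivial group vacuously has no nontrivial action on $[0,1)$, verifying the hypothesis.

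Invoking Theorem \ref{thm:coning_general} then shows that if $\res(H_1, T^2)$ admitted a section, we would need $T^2 \cong S^n$ and $H_1 \cong \mathbb{D}^{n+1}$, which is absurd since $T^2$ is not simply connected. Hence no section exists. The main (and essentially only) obstacle in this proof is identifying the correct hypothesis to verify; once one recognizes that the torus lies in the ``trivial quotient'' regime of the admissible-cover analysis, Theorem \ref{thm:coning_general} applies directly with no further work required.
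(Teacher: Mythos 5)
Your proof is correct and follows the same route as the paper: the paper derives the corollary by invoking Theorem \ref{thm:coning_general} together with the observation from Section \ref{sec:E_subgroup} that $E_x = \pi_1(T^2)$ (equivalently, that $T^2$ has no nontrivial admissible cover), so the hypothesis on $\pi_1(M)/E_x$ holds vacuously. Your verification of $E_x = \pi_1(T^2)$ via the translation subgroup retracting $ev$ is precisely the justification given in the paper's ``2-torus'' bullet point.
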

\noindent When $g>1$, the group $\pi_1(S_g)/E_x=\pi_1(S_g)$ has many nontrivial actions on $[0,1]$, which makes it hard to analyze. While we do not know whether $\res(H_g,S_g)$ has a section for $g>1$, in the next section we will prove that no surface has a section in the differentiable category, answering Ghys' original question.

Theorem \ref{thm:coning_general} has a generalization, as follows.  

\begin{thm} \label{thm:dim_plus_one}
Let $M,W$ be closed, connected manifolds with $\di(W)=\di(M)+1\ge 3$, and suppose that the deck group $\pi_1(M)/E_x$ 
of a maximal admissible cover for a singleton $\{x\}$  has no nontrivial action on $S^1$.  
There exists a nontrivial action of $\Homeo_0(M)$ on $W$ if and only if either 
\begin{enumerate}
\item $W=M\times S^1$ and the action is trivial on the $S^1$ factor, or 
\item $M=S^n, W=S^{n+1}$, and the action is by doubling the standard coning.
\end{enumerate}
\end{thm}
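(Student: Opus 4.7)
The plan is to adapt the strategy used to prove Theorem \ref{thm:coning_general}. Suppose $\rho: \Homeo_0(M) \to \Homeo(W)$ is nontrivial, and let $W_0 = W - \Fix(\rho)$. Each connected component $W''$ of $W_0$ is preserved by $\rho$ (as $\Homeo_0(M)$ is connected) and carries a fixed-point-free action, so Theorem \ref{flatbundle} endows $W''$ with a generalized flat bundle structure $(\tilde M \times F)/\pi_1(M)$ over $M$ with $\dim F = 1$. Since $W$ is closed, $F$ is a boundaryless $1$-manifold, so $F = S^1$ or $F = \mathbb{R}$. In either case the holonomy representation $\pi_1(M)/E_x \to \Homeo(F)$ is trivial: when $F = S^1$ this is the hypothesis directly; when $F = \mathbb{R}$ the action extends to $\Homeo(S^1)$ by fixing the point at infinity, and the hypothesis again applies. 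Thus $W''$ is $\rho$-equivariantly identified with $M \times F$ carrying the tautological $\Homeo_0(M)$-action on $M$ and the trivial action on $F$.

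If some $W''$ has $F = S^1$, then $W''$ is simultaneously open in $W$ and compact, hence a component of $W$; by connectedness $W = W'' = M \times S^1$, which is case (1). Otherwise every component of $W_0$ has $F = \mathbb{R}$. Fix such a component, identify it with $M \times \mathbb{R}$, pick $x \in M$, and let $r_n \to +\infty$. Compactness of $W$ yields a subsequential limit $\alpha_+ \in \Fix(\rho)$ of the sequence $(x, r_n)$, and a direct adaptation of the shrinking argument of Claim \ref{sameA} (using continuity of $\rho$ and transitivity of the $\Homeo_0(M)$-action on $M$) forces the entire horizontal slice $M \times \{r_n\}$ to Hausdorff-converge to $\alpha_+$. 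The analogous argument at $-\infty$ produces a second limit point $\alpha_- \in \Fix(\rho)$.

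The main obstacle, and the key step, is the local analysis at a fixed point $\alpha \in \Fix(\rho)$. A neighborhood of $\alpha$ in $W$ is an open cone whose link is the disjoint union of one copy of $M$ for each end of a component of $W_0$ accumulating at $\alpha$. For $W$ to be a manifold at $\alpha$, this cone must be a manifold, which by local homology and simple-connectedness (exactly as in Proposition \ref{onepoint}) forces the link to be a sphere. Since a connected sphere cannot be a disjoint union of more than one copy of a connected $M$, exactly one end of exactly one component of $W_0$ accumulates at $\alpha$, and $M = S^n$. In particular $\alpha_+ \neq \alpha_-$, each $\mathbb{R}$-component together with its two limit points is the suspension $\Sigma S^n = S^{n+1}$, and connectedness of $W$ forces a single such component. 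Hence $W = S^{n+1}$ with the doubled coning action, giving case (2). Conversely, both actions plainly exist: case (1) is the obvious product action, and case (2) arises by writing $S^{n+1}$ as $S^n \times [-1,1]$ with each slice $S^n \times \{\pm 1\}$ collapsed to a point and letting $\Homeo_0(S^n)$ act tautologically on the $S^n$-factor.
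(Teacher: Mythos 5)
Your proof is correct and follows essentially the same strategy as the paper's: apply Theorem \ref{flatbundle} to the components of $W - \Fix(\rho)$, use the hypothesis on $\pi_1(M)/E_x$ to trivialize the holonomy (with the same one-point-compactification trick for the $\R$-fiber case), split into the fixed-point-free case giving $W = M\times S^1$ versus the case where ends of the $\R$-bundle compactify, run the shrinking argument of Claim \ref{sameA} to force each end to a single fixed point, and invoke Proposition \ref{onepoint} to conclude $M = S^n$ and then $W = S^{n+1}$. The one step you state somewhat loosely --- that a neighborhood of $\alpha$ is an open cone on a disjoint union of copies of $M$, i.e.\ that $\alpha$ is isolated in $\Fix(\rho)$ and only one end accumulates there --- is equally compressed in the paper (``use this to conclude that $\alpha$ is an isolated fixed point, compactifying one end of $W'$''), so this is a shared elision rather than a gap introduced by your argument.
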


\begin{proof}[Proof of Theorem \ref{thm:dim_plus_one}]
Assume there exists a nontrivial homomorphism $\rho: \Homeo_0(M)\to \Homeo(W)$.  Let $W'$ be a connected component of $W-\Fix(\rho(\Homeo_0(M)))$.  By Theorem \ref{flatbundle}, $W' = (C_x \times F)/(\pi_1(M)/E_x)$, where $F$ is a one-manifold.  Since the action of $\pi_1(M)/E_x$ on $F$ is trivial, this bundle has a section, since $W'$ is connected the fiber $F$ is connected as well.   
If $\rho$ has no fixed points, then $W = W'$, $F = S^1$ and $W = M \times S^1$.   


Otherwise, as in the previous proof, $W' \cong (0,1) \times M$ and we let $em$ denote its embedding.  Consider a point $\alpha \in \Fix(\rho(\Homeo_0(M)))$ that can be approached by a sequence of points $em(r_n, x_n)$ with $r_n \to 1$ and $x_n$ converging in $M$.  The proof of Claim \ref{sameA} shows  for any ball $B$ around $\alpha$, there exists $n_0$ such that $em(r_n \times M) \subset B$ for all $n$ sufficiently large.

We modify the argument from the previous proof, as follows.   Delete the subset $em( [1/3, 2/3) \times M)$ from $W$, leaving a single boundary component homeomorphic to $M$.  If this manifold is disconnected, we consider only the connected component containing $\alpha$.  Call this connected manifold with boundary $W''$.    For $p>2/3$, each slice $em (\{p\} \times M)$ separates $\alpha$ from $\partial W''$.  
Modifying the previous definition, we say that for a connected, separating, codimension 1 submanifold $X$ of $W''$ such that $\alpha \notin X$, the component of $W'' - X$ containing $\alpha$ is the {\em interior} and the other component is the exterior.   In parallel to Claim \ref{int} we have
\begin{claim}\label{int2}
If $X,Y$ are disjoint, separating manifolds in $W''$  and $Y\subset \Int(X)$, then $\Int(Y)\subset \Int(X)$.
\end{claim}
\begin{proof}
$Y\subset \Int(X)$ implies that $X \subset \Ext(Y)$, so $(W'' - X) \cap \Int(Y) = \Int(Y)$ which is connected.  Then $\Int(Y) \cap \Int(X)$ and $\Int(Y) \cap \Ext(X)$ partition $(W - X) \cap \Int(Y) = \Int(Y)$ into two connected components, so one of these sets must be empty.  But $\Int(X) \cap \Int(Y)$ contains $\alpha$.  Thus, $\Int(Y) \cap \Ext(X) = \emptyset$, so $\Int(Y)\subset \Int(X)$. 
\end{proof}
We conclude as before that $W''$ is the one point compactification of $em([2/3, 1) \times M$, that $M$ is a sphere, and that $W''$ was one of two connected components of $W - em( [1/3, 2/3) \times M)$.  The same argument applies to the other connected component, and we conclude that the action on $W$ is the double of the standard coning.

%
\end{proof}



\section{Application: bundle structure and the extension problem in the differentiable case} \label{sec:diffeo_structure}
In this section, we prove the structure theorem and then discuss the extension problem for diffeomorphism groups.  
We recall the statement here.  

\begin{thm_bundle}  
Suppose $M$ is a connected, closed, smooth manifold and $N$ is a connected manifold with $\dim(N) < 2\dim(M)$.  If there exists a nontrivial continuous action $\Diff^r_0(M)\to \Diff^s_0(N)$, $0 \leq r \leq \infty$, $1\leq s \leq \infty$, then the action is fixed point free, and $N$ is a topological fiber bundle over $M$ where the fibers are $C^s$-submanifolds of $N$.   
\end{thm_bundle} 

The proof in the case where $\dim(M)=1$ is short: in this case, we have $M=S^1$ and our assumption on dimension means that $\dim(N) = 1$.  Simplicity of $\Diff^\infty_0(S^1)$ and the fact that $\Diff^\infty_0(S^1)$ is $C^r$-dense in $\Diff^r_0(S^1)$ means that finite-order rigid rotations act nontrivially on $N$, hence as finite order diffeomorphisms, and so $N = S^1$ and the orbit classification theorem means there is a single orbit for the action, which is conjugate to the standard action.   This can also be derived from the main theorem of \cite{Mann_ETDS}, which also gives a description of actions when $M$ is a noncompact 1-manifold, and does not assume continuity. 

The proof of Theorem  \ref{thm:structure} in the general case is somewhat involved, however it easily gives the negative solution to the extension problem, so we give this consequence first.  

\begin{proof}[Proof of Corollary \ref{cor:no_extension}]
Suppose $W$ is a compact smooth manifold with $\partial W = M$.  If $\res^r$ has a continuous section, then from Theorem \ref{thm:structure}, we know that there is a fiber bundle map $p: W\to M$ such that $p|_M=id$. This contradicts the fact that a boundary $\partial W$ is never a retract of $W$, which can be shown by the fact that the fundamental class $\mu_M\in H_{\dim(M)}(M;\mathbb{Z})$ has trivial image under the induced map of the embedding $M\to W$ into $H_{\dim(M)}(W;\mathbb{Z})$.

For $\res^\infty$, we do not have any section, since  \cite[Theorem 1.2]{Hurtado} implies that any extension $\Diff^\infty_0(M) \to \Diff^\infty_0(W)$ is automatically continuous.  
\end{proof} 

Now we establish the non-existence of fixed points.  For this, we do not need any restriction on the dimension of $N$, however we do use a hypothesis on the dimension of $M$.

\begin{prop}  \label{prop:no_fix2}
Let $M$ be a closed manifold with $\dim(M) \geq 3$.  If $\Diff_0^r(M)$ acts continuously on a manifold $N$ by $C^1$ diffeomorphisms with a global fixed point, then the action is trivial.
\end{prop}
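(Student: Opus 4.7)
The plan is to differentiate the action at the fixed point, use the simplicity of $\Diff^r_0(M)$ to kill the derivative representation, apply a Thurston-stability-style rigidity result to obtain a common open neighborhood of triviality, and then observe that the same argument applies at every fixed point so that $\Fix(\phi)$ is clopen in the connected manifold $N$.

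Let $x\in N$ be a global fixed point of the continuous action $\phi\colon\Diff^r(M)\to\Diff^s(N)$, $s\ge1$. Since each $\phi(f)$ is a $C^1$ diffeomorphism fixing $x$, differentiation at $x$ yields a continuous homomorphism
\[
D_x\colon \Diff^r_0(M)\longrightarrow \GL(T_xN).
\]
By Mather--Epstein--Thurston, $\Diff^r_0(M)$ is algebraically simple whenever $r\neq \di(M)+1$, and the exceptional regularity is handled by the approximation trick (restricting to the dense subgroup $\Diff^{r+1}_0(M)$) used in the proof of Theorem~\ref{thm:small_quotient_diff}. Hence $\ker D_x$ is either trivial or all of $\Diff^r_0(M)$. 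To rule out trivial kernel, for any $\ell\in\mathbb{N}$ pick $\ell$ smooth, compactly supported, non-vanishing vector fields on $M$ with pairwise disjoint supports; their commuting flows produce a continuous injection $\R^\ell\hookrightarrow\Diff^r_0(M)$. On the other hand, any continuous injection $\R^\ell\to\GL(T_xN)$ forces $\ell\le(\di N)^2$, since the closure of the image lies in a connected abelian Lie subgroup of $\GL(T_xN)$, whose dimension is at most $(\di N)^2$. Choosing $\ell>(\di N)^2$ produces a contradiction, so $D_x$ is trivial.

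Now every $\phi(f)$ is a $C^1$ diffeomorphism fixing $x$ with identity linear part. Regard $\phi(\Diff^r_0(M))$ as a group of $C^1$ germs at $(N,x)$ with trivial derivative. Thurston's stability theorem supplies a nontrivial homomorphism from this germ group into the abelian group $T_xN$ unless the germ group acts trivially on a common open neighborhood of $x$. Since $\Diff^r_0(M)$ is perfect (being simple and nontrivial), no such homomorphism exists, so there is an open neighborhood $U\ni x$ on which $\phi(\Diff^r_0(M))$ is pointwise trivial; hence $x$ lies in the interior of $\Fix(\phi)$. The key observation is that this entire argument applies verbatim at any other $y\in\Fix(\phi)$: the derivative $D_y\phi\colon \Diff^r_0(M)\to\GL(T_yN)$ is continuous and must be trivial by the same $\R^\ell$ argument, and Thurston stability then produces an open neighborhood of $y$ contained in $\Fix(\phi)$. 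Therefore $\Fix(\phi)$ is open. Since $\Fix(\phi)$ is also closed (as an intersection of fixed-point sets of individual homeomorphisms) and nonempty (it contains $x$), connectedness of $N$ gives $\Fix(\phi)=N$, so $\phi$ is trivial.

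The main obstacle is extracting from Thurston's stability theorem a genuine \emph{common} open neighborhood of triviality for the whole group $\phi(\Diff^r_0(M))$, since Thurston's theorem is classically stated for finitely generated groups. One handles this by applying Thurston to an exhausting sequence of finitely generated subgroups (for each, the intersection of triviality neighborhoods of a finite generating set gives a common neighborhood), and then using continuity of $\phi$ and separability of $\Diff^r_0(M)$ to ensure the neighborhoods do not shrink to a point; the hypothesis $\di(M)\ge 3$ enters to ensure, via the Orbit Classification Theorem, that any nontrivial orbit accumulating near a fixed point has dimension at least $3$, which combined with the $C^1$-smallness forced by $D_x=I$ and fragmentation yields the needed uniformity.
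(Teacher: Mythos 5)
Your first step — showing that the derivative representation $D_y\colon\Diff^r_0(M)\to\GL(T_yN)$ at any fixed point $y$ is trivial — is essentially the same as the paper's, and your injection-of-$\R^\ell$ argument for non-linearity is fine. After that, however, you diverge from the paper onto a route with a genuine gap, which you yourself flag but do not actually close.

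The gap is the uniformity of the Thurston-stability neighborhood over the (non--finitely-generated) group $\phi(\Diff^r_0(M))$. Thurston's theorem, applied to a finitely generated subgroup $H$ with trivial $1$-jets, does give a common neighborhood $U_H$ of $x$ pointwise fixed by $H$ (intersect the triviality neighborhoods of the finitely many generators). But $\bigcap_H U_H$, taken over an exhausting family of finitely generated subgroups, has no reason to contain any open set: it could shrink to $\{x\}$. Your proposed remedy --- ``continuity of $\phi$ and separability'' plus an appeal to the Orbit Classification Theorem and fragmentation --- is a description of a hope rather than an argument; no mechanism is given for why the neighborhoods stabilize. Until that is supplied, you have only shown that the \emph{germ} of each individual $\phi(f)$ at $x$ is trivial, which does not make $\Fix(\phi)$ open. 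The way $\dim(M)\geq 3$ ``enters'' in your sketch is also not a real use of the hypothesis; if a proof doesn't genuinely use a stated hypothesis, that is a warning sign.

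The paper sidesteps the uniformity problem entirely by never trying to prove openness of $\Fix(\phi)$ via germ triviality of the whole group. Instead it takes a boundary point $y$ of $\Fix(\phi)$, a sequence $y_i\to y$ of non-fixed points, and uses the Orbit Classification Theorem to attach to each $y_i$ a finite set $X_i\subset M$ controlling its orbit. The key move is to construct a \emph{single} diffeomorphism $g$, supported near $X=\lim X_i$ and built from a loop of rotations, such that $g(X_i)\neq X_i$ (so $\rho(g)(y_i)\neq y_i$) while $g^4$ lies in $\Stab(X_i)^r_0$ (so $\rho(g)^4(y_i)=y_i$). This is exactly where $\dim(M)\geq 3$ is used: it makes $\pi_1(SO(d))=\Z/2$, so the rotation loop defining $g^4$ contracts, placing $g^4$ in the identity component of the jet stabilizer. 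Then Observation~\ref{obs:order_2} --- near a $C^1$ fixed point with trivial derivative, $k$-periodic points are genuine fixed points --- applied to the one map $\rho(g)$ gives the contradiction as $y_i\to y$. Because only a single diffeomorphism is ever in play, no uniformity over the group is needed. If you want to pursue your Thurston-stability route, you would need to prove the uniform-neighborhood claim, and the paper's Observation~\ref{obs:order_2} together with the constructed torsion-like element is precisely the tool that replaces it.
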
 

The proof uses the following easy observation about periodic points. 

\begin{obs} \label{obs:order_2}
Let $f$ be a local $C^1$ diffeomorphism of $\R^n$ fixing $0$ with $Df_0 = I$.   For any $k \in \mathbb{N}$, there exists a neighborhood $U_k$ of $0$ where $\Fix(f^k) \cap U_k \subset \Fix(f)$. 
\end{obs}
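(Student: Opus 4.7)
The plan is to show that near $0$, the map $f$ acts like a near-identity translation $x \mapsto x + g(x)$, and that when $g$ has very small Lipschitz constant, successive displacements along an orbit point in nearly the same direction and cannot cancel to zero unless each is zero.

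Concretely, I would write $f(x) = x + g(x)$ where $g(0) = 0$ and $Dg_0 = Df_0 - I = 0$. Given $k$, I would fix in advance a constant $\delta < 1/(2k)$. By continuity of $Df$ there is an open ball $V$ around $0$ on which $\|Dg_y\| < \delta$, and the mean value inequality then shows that $g$ is $\delta$-Lipschitz on $V$. I would next shrink to a smaller neighborhood $U_k \subset V$ chosen so that for every $x \in U_k$ the forward orbit $x_0 = x, x_1 = f(x), \ldots, x_k = f^k(x)$ remains in $V$; this is possible because the finitely many maps $f, f^2, \ldots, f^k$ are continuous at $0$ and each fixes $0$.

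The heart of the argument is the following computation. Suppose $x \in U_k$ satisfies $f^k(x) = x$, and write $x_i = f^i(x)$. Telescoping gives $\sum_{i=0}^{k-1} g(x_i) = x_k - x_0 = 0$. Set $M = \max_i |g(x_i)|$. Since $|x_{i+1} - x_i| = |g(x_i)| \le M$, the triangle inequality yields $|x_i - x_0| \le kM$, so the $\delta$-Lipschitz property of $g$ on $V$ gives $|g(x_i) - g(x_0)| \le \delta k M$ for each $i$. Summing over $i$ and using $\sum g(x_i) = 0$ produces $|k g(x_0)| \le \delta k^2 M$, hence $|g(x_0)| \le \delta k M$. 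On the other hand, $|g(x_i)| \le |g(x_0)| + \delta k M$ gives $M \le |g(x_0)|/(1 - \delta k)$. Combining the two inequalities yields $(1 - 2\delta k)|g(x_0)| \le 0$, and the choice $\delta < 1/(2k)$ forces $g(x_0) = 0$, that is, $f(x) = x$.

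The only delicate point is ensuring the first $k$ iterates of $x$ stay in the Lipschitz neighborhood $V$, which is why the neighborhood $U_k$ must be chosen after $V$ and depends on $k$; everything else is routine. The geometric content is that when $Df_0 = I$, the displacement $g(x)$ varies so slowly along the orbit that the $k$ increments $g(x_0), \ldots, g(x_{k-1})$ are nearly equal, so their sum can vanish only if each one does.
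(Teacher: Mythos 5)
Your proof is correct, and it takes a genuinely different route from the paper. The paper argues geometrically: it fixes a unit tangent direction $v$ along the segment $L$ joining $x$ to $f(x)$, arranges (by making $U$ small) that each $Df^{j}_{y}(v)$ has norm and inner product with $v$ close to $1$, and then observes that the concatenated path $L \cup f(L) \cup \cdots \cup f^{k-1}(L)$ has strictly increasing projection onto the line through $L$, so it cannot close up into a loop. You instead argue analytically with the displacement field $g = f - \mathrm{id}$: the telescoping identity $\sum_{i=0}^{k-1} g(x_i) = 0$ combined with a small Lipschitz constant for $g$ shows that the $k$ increments are nearly equal, forcing each of them to vanish. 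The two proofs encode the same underlying heuristic (near the identity, displacements along an orbit are nearly parallel and so cannot cancel), but yours is purely estimate-based while the paper's is a one-dimensional monotonicity argument after projecting onto a line. Your version makes the quantitative dependence of $U_k$ on $k$ completely explicit through the choice $\delta < 1/(2k)$; the paper's version is shorter once the right projection is identified. Both require $Df_0 = I$ in an essential way and both need the preliminary shrinking step to keep the first $k$ iterates inside the controlled neighborhood. Your argument is complete and correct.
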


\begin{proof}[Proof of Observation \ref{obs:order_2}]
Let $k$ be given.   If $f^k$ has no fixed points in a neighborhood of $0$, then we are done.  
Otherwise, using continuity of $Df$ and the fact that it is identity at 0, let $U$ be a neighborhood of $0$ small enough so that, for any unit vector $v$ tangent to any point $y \in U$, and any $j < k$, we have that $||Df^j_y(v)||$ and $| Df^j_y(v) \cdot v |$ both have value in $[1/2, 3/2]$.  This choice is somewhat arbitrary, what is important here is that it is a small interval containing 1.    

Take a neighborhood $U_k \subset U$ small enough so that the convex hull of $U_k \cup f(U_k)$ lies in $U$.  Let $x \in \Fix(f^k) \cap U_k$, and suppose for contradiction that $f(x) \neq x$.  Let $L$ be a straight line segment from $x$ to $f(x)$ parametrized by unit speed, by construction this is contained in $U$.  Furthermore, for any $j = 1, 2, ... k-1$ the image $f^j(L)$ is a $C^1$ embedded curve from $f^j(x)$ to $f^{j+1}(x)$, with tangent vector at every point satisfying having norm in $[1/2, 3/2]$, and dot product with the vector in the tangent direction to $L$ also in $[1/2, 3/2]$.   It follows that the projection of this path to the line containing $L$ has positive derivative everywhere, so the union of the segments $f^j(L)$ cannot form a closed loop, contradicting the fact that $f^k(x) = x$.

%
%
\end{proof}

\begin{proof}[Proof of Proposition \ref{prop:no_fix2}]
Suppose $\rho: \Diff^r_0(M) \to \Diff^1(N)$ is a nontrivial, continuous action as given in the statement.  
By the orbit classification theorem, we know that every orbit of $\rho$ is a cover of a bundle over $\Conf_k(M)$ for some $k$ (depending on the orbit, but bounded in terms of the dimension of $N$).   We say such an orbit has {\em type k}. 

Suppose for contradiction that $\rho$ has at least one global fixed point 
 and let $F$ denote the fixed set.   For each $y \in F$, taking the derivative at $y$ gives a homomorphism $\Diff_0^r(M) \to \mathrm{GL}(n,\R)$ where $n = \dim(N)$.   Consider the restriction of this map to $\Diff_0^\infty(M)$; this is a simple group with no nontrivial finite dimensional linear representations, so the derivative homomorphism at any fixed point is trivial.  
Take some $y \in N$ that lies on the boundary of $F$ and let $y_i$ be a sequence of points in $N - F$ converging to $y$.  After passing to a subsequence, we can assume that each $y_i$ lies in a type of orbit of the same type, say type $l$.  
Thus, to each point $y_i$ we can associate a set $\pi(y_i) = X_i \subset M$ of cardinality $k$, such that $\rho(\Stab^r(X_i)_0) \subset \Stab(y_i)$.   

After passing to a further subsequence, we may assume that the sets $X_i$ Hausdorff converge to a closed set $X$; this will be a set of $k$ or fewer points on $M$.   By modifying the sequence $y_i$, we may also assume that $X_i \neq X$ holds for all $i$, as follows.  For each $i$ such that $X_i = X$, choose some $f_i \in \Diff^\infty_0(M)$ that is $C^r$ close to the identity, but with $f_i(X) \neq X$, and replace $y_i$ by $\rho(f_i)(y_i)$.  Then $\pi \rho(f_i)(y_i) = f_i(X) \neq X$, and since the action is continuous, we may choose $f_i$ close enough to identity so that  $\rho(f_i)(y_i)$ still converges to $y$.   Thus, we now assume no point in our sequence $y_i$ projects under the map $\pi$ to $X$. 

We now proceed to construct an element $g\in \Diff^r(M)$ such that $g^4\in \Stab^r(X_i)_0$ but $g(X_i)\neq X_i$. This will produce a contradiction with Observation \ref{obs:order_2} because $y_i$ is a periodic, but not fixed, point for $g$ for every $i$ and the limit $y$ is a global fixed point.
Take disjoint neighborhoods $U_x$ of each point $x \in X$.  Define a diffeomorphism $g \in \Diff^\infty_0(M)$ supported on the union of the sets $U_x$ as follows.   Fix an order two element $R \in \SO(m)$, where $m = \dim(M)$, and fix a path $\gamma$ of rotations from $R$ to the identity parametrized by $t \in [0, 1]$, that is constant on a neighborhood of $0$ and of $1$.  Take local charts identifying each $x\in X$ with the origin $0 \in \R^{d}$, and identifying a small neighborhood of $x$ contained in $U_x$ with the unit ball in $\R^m$.  Define $g$ to agree with the rotation $\gamma(t)$  on the sphere of radius $t$, and extend to the identity outside the ball, and use the chart to identify this with a diffeomorphism of $M$ supported on the union of the sets $U_x$.    For an appropriate choice of charts we can ensure that $g(X_i) \neq X_i$ for all large $i$.    

By construction, $g^2$ is the identity on a neighborhood of $X$, so for all large $i$, it acts trivially on the jet space over $X_i$.  
Since $g^2$ is described by a {\em loop of rotations} defined on concentric spheres (and based at identity) and $\pi_1(SO(m)) = \Z/2$, we can contract the loop of rotations corresponding to the element $g^4$ in $SO(m)$, and this gives an isotopy of $g^4$ to the identity supported on a compact set.  This shows that $g^4$ actually lies in $\Stab^r(X_i)_0$ for all $r$, so acts trivially on the fibers over $X_i$ coming from the fiber-bundle structure of each orbit.  In particular, $\rho(g)^4(y_i) = y_i$.  
However, since $g (X_i) \neq X_i$, we know that $\rho(g)(y_i) \neq y_i$ for all large $i$.    Since $y_i \to y$, and $D\rho(g)_y = I$, this gives the desired contradiction with  Observation \ref{obs:order_2}.  
\end{proof}
We suspect that a similar result holds for $\dim(M) = 2$, but the situation is more complicated because $\pi_1(SO(2)) = \Z$.  To avoid this, we instead simply treat the dimension 2 case separately for the purposes of Theorem \ref{thm:structure}.

\begin{proof}[Proof of Theorem \ref{thm:structure}]
We have already treated the easy case where $\dim(M)=1$.  We first give the proof in the higher dimensional setting, then show how to adapt the arguments to the surface case.  

\para{Case: $\dim(M) = m >2$}
By Proposition \ref{prop:no_fix2}, the action is fixed point free.  
As in the proof of Theorem \ref{flatbundle}, we first define a projection map $p: N\to M$ by setting $p(y)=x$ if $x$ is the unique point in $N$ such that $G_y \subset\Stab(x)$, where $G_y \subset \Diff_0^r(M)$ denotes the stabilizer of $y$ under the action of $\rho$.    This is well defined, since our restriction on dimension means that no orbits can be built from jet bundles over $\Conf_j(M)$ for any $j>1$.  
By Theorem \ref{thm:small_quotient_diff}, we also know that $\Stab^r(x)_0 \subset G_y$.  Also, the same argument as in the proof of Theorem \ref{flatbundle} shows that for any open ball $B \subset M$ we have
\[ 
p^{-1}(B)=N - \Fix(\rho(H(B)))
\]
where $H(B)$ denotes the diffeomorphisms supported on $B$ and isotopic to the identity through diffeomorphisms supported on $B$, so $p$ is a continuous map.    

Fix a basepoint $b \in M$ and let $F = p^{-1}(b)$.  
First, we define a local product structure as we did in the proof of the generalized flat bundle theorem for actions by homeomorphisms.  
Let $T$ be an embedding of a neighborhood of the identity in $\R^m$ into $\Diff^\infty_0(M)$ with $T(0) = \mathrm{id}$, such that $b$ has a free orbit under $T$, we again do this using a collection of $m$ smooth vector fields on $M$ which are linearly independent inside of a small coordinate box containing $b$, as we did in the proof of Proposition \ref{prop:cover}. The orbit map of $b$ under $T$ gives a smooth local chart for $M$ around $b$.  
Let $U$ be a small neighborhood of the identity in $T \subset \Diff^\infty_0(M)$.  Then the map  $\psi: U \times F \to p^{-1}(U \cdot b)$ defined by $\psi(t,y) = \rho(t)(y)$ is a continuous, injective map onto the open subset $p^{-1}(U\cdot b)$ of $N$. 

The inverse of this map sends a point $x = \rho(t_x)(y)$ to $(t_x, y) = (t_x, \rho(t_x)^{-1}(x))$.  The map $x \mapsto t_x$ is continuous, since $t_x$ is the unique point in $U$ such that $t_x b = p(x)$. Thus, this map is a homeomorphism onto its image, an open subset of $N$.  From this it follows that $F$ is a homology manifold of dimension $\dim(N)-\dim(M)$.

Note that $F$ may be disconnected, and may possibly even have infinitely many connected components, for instance in the case of lifting actions to a cover of a negatively curved manifold, or to the projectivized tangent bundle of such a cover.    

\begin{claim}\label{connectedh}
All connected components of $F$ are homeomorphic.  
\end{claim}
\begin{proof}
To do this, fix a connected component $C$ of $F$.  Let 
\[ D(C) = \bigcup_{f \in \Diff_0^r(M)} \rho(f)(C) \]
this is a $\rho$-invariant open subset of $N$ since $C$ is an open set in $F$.  Now if $\rho(f)(C) \cap F \neq \emptyset$ for some $f \in \Diff_0^\infty(M)$, then $f \in \Stab(b)$, which acts by homeomorphisms of $F$, permuting its connected components.  Thus, $\rho(f)(C)$ is either equal to $C$ or disjoint from it.  
It follows that the invariant sets $D(C')$, as $C'$ ranges over the connected components of $F$ in distinct orbits of the permutation action, form a partition of $N$ into countably many disjoint open sets.  Since $N$ is connected, this partition must be trivial, and we conclude that all connected components are homeomorphic.    
\end{proof}

Our next goal is to show that $F$ is a $C^s$ submanifold.   For this we use a different subgroup of $\Diff^r_0(M)$ supported in a neighborhood of $b$.  

Fix a local coordinate chart around $b \in M$, identifying $b$ with the origin in $\R^m$.  
Construct a subgroup $S \subset \Stab(b) \subset \Diff^\infty(M) \subset \Diff^r(M)$ of ``local rotations" as follows: 
choose a small neighborhood $\mathcal{U}$ of the identity in $\SO(m)$, and let $\phi_t$ be a smooth retraction of $\mathcal{U}$ to $\{id\}$.   Let $B_t$ denote the ball of radius $t$ about the origin in $\R^m$.  For $u \in \mathcal{U}$, take a diffeomorphism $g_u$ supported on $B_2$ agreeing with $u$ on $B_1$ and rotating the sphere of radius $1+t$ by $\phi_t(u)$.  Identify this with a diffeomorphism of $M$ by extending to the identity outside the image of $B_2$ in our coordinate chart.  Let $S$ be the group generated by $\{g_u:u\in \mathcal{U}\}$.  Since $S$ fixes $b$ and preserves the image of $B_1$ in our chart, $\rho(S)$ acts on $p^{-1}(B_1)$ and preserves the fiber $F$.  
\begin{claim}
The action of $S$ on $p^{-1}(B_1)$ factors through the universal cover $\widetilde{\SO(m)}=\text{Spin}(m)$ of $\SO(m)$.
\end{claim}
\begin{proof}
Let $g \in S$. For each $t \in [1,2]$, the restriction of $g$ to $B_t$ is a rotation, thus we can view $g$ as a path in $\SO(m)$ based at $id$ (the restriction to $B_2$) and ending at the restriction of $g$ to $B_1$.  
Any $g_1,g_2\in S$ satisfying $g_1|_{B_1}=g_2|_{B_1}$ are two paths to the same endpoint, so $(g_1g_2^{-1})$ defines a loop of rotations. As in the proof of Proposition \ref{prop:no_fix2}, since $\pi_1(\SO(m))=\mathbb{Z}/2$, we know that $(g_1g_2^{-1})^2\in \Stab^r(x)_0$ for any $x\in B_1$, so acts trivially on $p^{-1}(x)$.  Thus the action of $S$ on $p^{-1}(B_1)$  factors through $\widetilde{\SO(m)}$ by Theorem \ref{thm:small_quotient_diff} .
\end{proof}

Consider now the restriction of the action $\rho(S)$ to $F = p^{-1}(b)$.   
Suppose that $\mathcal{O} \subset F$ is an orbit of the action of $\rho(S)$ on $F$.  Then $\mathcal{O}$ is the continuous, injective image of a quotient of $\widetilde{\SO(m)}$ by some closed subgroup $H$.  Since $S$ is connected, $\mathcal{O}$ is connected, and our restriction on dimension implies that $\mathcal{O}$ has dimension at most $m-1$.  This dimension restriction means the closed subgroup is either $\widetilde{\SO(m)}$ itself (hence $\mathcal{O}$ is a point), or has identity component isomorphic to $\SO(m-1)$ or  $\widetilde{\SO(m-1)}$.  See \cite[sec.11]{montgomerysamelson} for the classification of small codimension closed subgroups of $O(m)$.  In the second case, $\widetilde{\SO(m)}/H$ is a manifold of dimension $m-1$ and if equipped with a metric induced from a bi-invariant metric on $\SO(m)$ has an isometry group of dimension $m(m-1)/2$ (the codimension of $h$) and thus by  \cite[Theorem II.3.1]{kobayashi} is a symmetric space, either $S^{m-1}$ or $\mathbb{R}\mathrm{P}^{m-1}$.

If some orbit is $S^{m-1}$ (respectively,  
$\mathbb{R}\mathrm{P}^{m-1}$), then our restriction on dimension implies that this orbit is necessarily a connected component of $F$.  Since all connected components are homeomorphic by Claim \ref{connectedh}, we conclude that $N$ is a $S^{m-1}$ (respectively,  $\mathbb{R}\mathrm{P}^{m-1}$) bundle over a cover of $M$; since the action is $C^s$ and the fiber an orbit of a compact group action, it is a $C^s$ submanifold \cite{MZ}.  

Otherwise, $F$ is pointwise fixed by $S$, and equal to $\Fix(S) \cap p^{-1}(B_1)$.   The local linearization theorem for actions of compact groups (see Theorem 1, \S 5.2 in \cite{MZ}) then says that this fixed set is a $C^s$ submanifold.   In all cases, we now know that $F$ is a $C^s$ submanifold and $\psi$ gives a local product structure.

\para{Case: $\dim(M)=2$}
Since Proposition \ref{prop:no_fix2} does not apply, here we work first with the action of $\rho$ on $N-\Fix(\rho)$ and then use its structure to show that $\Fix(\rho) = \emptyset.  $

Let $N'$ be a connected component of $N-\Fix(\rho)$.    
We apply the first part of the previous proof verbatim, taking a point $b \in N'$ and a smooth embedding $T$ of a neighborhood $U$ of the identity in $\R^2$ into $\Diff^\infty_0(M)$ such that $b$ has a free orbit under $T$.   As before, the map $U \times F \to p^{-1}(U\cdot b)$ defined by $(u, y) \mapsto \rho(u)(y)$ is continuous, injective, and a homeomorphism onto its image $p^{-1}(U\cdot b)$, an open subset of $N'$.  Thus, $F$ is a homology manifold, and our restriction on dimension implies that it is either a discrete set (in which case $N'$ is a cover of $M$ with the lifted action) or a one dimensional topological submanifold.  Claim \ref{connectedh} still applies and shows that all connected components of $F$ are homeomorphic. 

Also, since $T$ is a smooth embedding and $\rho$ is an action by $C^s$ diffeomorphisms, for any $y_0 \in F$, the orbit map $U \to N'$ given by $u \mapsto \rho(u)(y_0)$ is $C^s$, see \cite[Section 5.1]{MZ}.  

Following the proof from the previous case, we may define a group of local rotations $S$ such that the action of $S$ on $p^{-1}(B_1)$ factors through $\widetilde{\SO(2)}$.   Here it will be convenient to also require some compatibility between $S$ and $T$.   Working in the coordinates we used to define $T$, if $S$ is taken to agree with rigid rotations on the ball $B_1$ of radius $1$ about $0$ in $\R^2$, then for any $u$ sufficiently close to 0 in $U$, and any $s \in S$, the conjugate $sus^{-1}$ will also agree with a small rigid translation in some neighborhood of $0$.  In other words, the germ of $sus^{-1}$ at 0 agrees with the germ of some element $t \in T$.   Moreover, it follows from the construction of $S$ and $T$ that $t^{-1} \circ sus^{-1}$ is isotopic to the identity relative to a fixed neighborhood of $0$ in $\R^2$, and hence the diffeomorphism of $M$ which it defines is supported away from $b$ and isotopic to the identity relative to some neighborhood $B \subset B_1$ of $b$.  The orbit classification theorem then says that it acts trivially on $p^{-1}(B)$.    

This choice of $S$ means that, if $y_0 \in F$ is fixed by some $s \in S$, then the germ of $t$ and $sus^{-1}$ agree at $y_0$.  Thus, the tangent space to the embedded disc $\rho(U)(y_0)$ at $y_0$ is invariant under the action of $D\rho(s)_{y_0}$, for each $s \in S$, and the action of $S$ on the tangent space is by rotations.    This observation will be useful to us as we study the structure of $F$.   

Consider now the orbits of the action of $S$ on $F$.  These may be singletons, embedded circles on which $S$ acts by rotation, or copies of $\R$ on which $S$ acts by translation.
The difficulty in this case is that, unlike in the higher dimensional case, $S$ is not compact, and orbits may not be compact.  It is also not immediately evident that connected components of $F$ are either pointwise fixed or equal to orbits of $S$; one could in theory have a connected component of $F$ homeomorphic to $S^1$, but containing an orbit of $S$ homeomorphic to $\R$ together with a fixed point for $S$.  
To deal with this, we enlarge $S$ to contain local elements of $\SL(2,\R)$.  

Take a neighborhood $\mathcal{A}$ of the identity in the diagonal subgroup of $\SL(2, \R)$ and let $\mathcal{N}$ be a neighborhood of the identity in the nilpotent upper triangular subgroup, chosen small enough so that $v(B_1) \subset B_{3/2}$ for all $v \in \mathcal{A} \cup \mathcal{N}$ (recall $B_r$ denotes the ball of radius $r$ in $\R^2$).  Identifying $B_r$ with its image in $M$ via our chart, each $v \in \mathcal{A}$ gives a partially defined diffeomorphism of $M$ that can be extended to a diffeomorphism of $M$ that is identity outside of $B_2$, in a way that continuously embeds $\mathcal{A}$ into $\Diff_c^\infty(M)$, similarly for $\mathcal{N}$.  The group these diffeomorphisms generate consists of diffeomorphisms supported in $B_2$ and isotopic to the identity relative to $b$.  
Let $V \subset \Diff_c(M)$ be the group generated by all such elements.  The KAN decomposition implies that every germ of a linear map at $b$ is represented by some $s v$ where $s \in S$, and $v \in V$.  
The group generated by $S$ and $V$ fixes $b$ in $M$ so acts on $F = p^{-1}(b)$ in $N$.  This action factors through an action of the group $\widetilde{\SL(2,\R)}$, since $\widetilde{\SL(2,\R)}$ is topologically $\widetilde{SO(2)} \times N$.  

The classification of closed subgroups of  $\widetilde{\SL(2,\R)}$ implies that if $\mathcal{O}$ is a one-dimensional orbit for the action of $S$ on $F$, then it is also an orbit for the aciton of $\widetilde{\SL(2,\R)}$ and the action of $\widetilde{\SL(2,\R)}$on $\mathcal{O}$ is either the standard action of a finite cyclic cover of $\mathrm{PSL}(2,\R)$ on $S^1$ or of its universal covering group $\widetilde{SL(2,\R)}$ on $\R$.   In particular, the {\em standard action} means that, for any point $x$, there is a conjugate of a lift of the matrix $\left( \begin{smallmatrix} 2 & 0 \\ 0 & 1/2 \end{smallmatrix} \right)$ in $\SL(2,\R)$ which fixes $x$ and all of its translates under the deck group for the cover $\R \to S^1$, with derivative equal to 2 at these points, and also has a bi-infinite set of fixed points with derivative equal to 1/2 at each.

\begin{claim}
The closure $\overline{N'}$ of $N'$ in $N$ contains no global fixed point, hence $\Fix(\rho)=\emptyset$.  
\end{claim} 
\begin{proof} 
Suppose for contradiction that $\{x_n'\in N'\}$ is a sequence with $x_n' \to y \in \Fix(\rho)$.  Since $x_n'$ corresponds to a point $p(x_n')\in M$, we can assume that $p(x_n')\to b$ for some $b$ by passing to a subsequence. Then we can find elements $f_n\in \Diff^\infty_0(M)$, such that $f_n\to id$ and $f_n(p(x_n'))=b$. By continuity of the action, $\rho(f_n)(x_n')\to y$.  This shows that we can take a new sequence $x_n= \rho(f_n)(x_n')$ such that $x_n\in p^{-1}(b)$ and $x_n\to y\in \Fix(\rho)$.

Each $x_n$ lies in an orbit that is either an injective copy of $\R$, of $S^1$, or an isolated point.  After passing to a subsequence, we can assume all are the same type.   We treat these three cases separately, each will give us a contradiction.  

First suppose each $x_n$ is contained in an $\R$-orbit.  By compactness of $\R/\Z$, we may pass to a subsequence such that $x_n$ converges to some $\overline{x}$ mod $\Z$.   Then there exists $s_n\in S$ converging to identity such that $s_n(x_n) = \overline{x}$ mod $\Z$, and $\rho(s_n)(x_n)$ converges to $y$. 
Take $g \in V$ such that $\overline{x}$ is a fixed point of $g$ with derivative 2.   Then $\rho(g)$ also has derivative equal to 2 in the direction of the tangent space of the $\R$-orbit at the point $\rho(s_n)(x_n)$, contradicting the fact that these converge to a global fixed point, and at global fixed points all derivatives are trivial, as we remarked in the proof of Proposition \ref{prop:no_fix2}.   

The same argument shows that  no sequence of $S^1$ orbits for $S$ can accumulate to a global fixed point.   

Finally we deal with singleton orbits.  
Our observation that for any fixed point $y_0$ in $F$, the tangent space to the embedded disc $\rho(U)(y_0)$ at $y_0$ is invariant and rotated by $S$, means that $S$ acts with derivatives uniformly bounded away from identity at fixed points in $F$, so cannot accumulate to a global fixed point either.  Thus, no sequence of points in $F$ can converge to a point in $\Fix(\rho)$.   We conclude that $\Fix(\rho) = \emptyset$ and $N = N'$. 
\end{proof} 


To finish the proof of the theorem, we want to show that $\R$-orbits for $S$ are connected components of $F$, i.e. no $\R$-orbit for $S$ accumulates at a fixed point for $S$.  The strategy of the argument is similar to that above.  
Suppose for contradiction that $x_n \in F$ belong to a single $\R$-orbit $\mathcal{O}$ and $x_n \to y$ where $y$ is fixed by $S$.   Choose $s_n \in [0,1] \subset S \cong \R$ so that  the points $y_n := s_n(x_n) \in F$ all lie in the same orbit of $\tau$.  Since $y$ is fixed by $S$, we have $y_n \to y$.  
Recall that the tangent space to $y$ decomposes as a direct sum with a $S$-invariant plane, tangent to the image of $U(y)$, on which $S$ acts by rotations.   Let $v$ denote the axis of rotation in $T_y(N)$.  
Since $\mathcal{O}$ is an $S$-invariant set, the tangent  line to $\mathcal{O}$ at $y_n$ approaches $v$ as $n \to \infty$.  
Recall the description of the standard action of $V$ on $\R$ that we gave above.   Since $y_n$ all differ by a multiple of $\tau$,  we may take an element $g \in V$ (the lift of a hyperbolic element of $\SL(2,\R)$, as before) such that $\rho(g)$ fixes each point $y_n$, and has derivative equal to $1/2$ in the direction tangent to $\mathcal{O}$ there, and also fixes the points $s(y_n)$ with derivative equal to $2$ there, where $s \in S$ is some nontrivial element not equal to a multiple of $\tau$. Since $y$ is fixed by $s$, the sequence $s(y_n)$ also accumulates at $y$ and has tangent direction in $F$ tending to $v$.  This contradicts continuity of the derivative at $y$.  

Thus, each $\R$-orbit contained in $F$ is a connected component of $F$.  The same is true for the $S^1$-orbits.  It follows that the union the singleton orbits also forms a clopen set consisting of a union of connected components.     In the case of $S^1$ or $\R$ orbits, these are $C^s$ embedded submanifolds, and we have a local product structure using the group $T$ and chart $\psi$ as before.   In the neighborhood of a singleton orbit, the action of $S$ on $p^{-1}(B_1)$ factors through an action of a compact group, since the deck transformation $\tau$ of $S \to SO(2)$ has support disjoint from $B_1$, so acts trivially on the fibers over all points of $B_1$.   We may now apply local linearization to conclude these components of $F$ are $C^s$ embedded submanifolds as well.  \qedhere

\end{proof} 
%
%
%
%
%
%

\section{Application: actions of $\Homeo_0(S^1)$ on compact surfaces}  \label{sec:Militon}
In this section, we will classify actions of $\Homeo_0(S^1)$ on compact surfaces. We will give the proof for the disc $\mathbb{D}^2$ which proves \cite[Conjecture 2.2]{Militon}.   A complete classification of actions on other surfaces can be obtained by essentially the same  argument, giving in particular a new proof of the main theorem of \cite{Militon}. 

\subsection{The statement}
We first give a general procedure to construct actions of $\Homeo_0(S^1)$ on $\mathbb{D}^2$.  (Similar to \cite[\S 2]{Militon}.)
Let $L=[0,1]$ be the orbit space of the standard $SO(2)$ action on $\mathbb{D}^2$, where $r\in [0,1]$ represents the circle of radius $r$. Let $K\subset L$ be a closed subset including $0$. We use the convention $S^1=\mathbb{R}/\mathbb{Z}$ in the following. Let  $a_0:\Homeo_0(S^1)\to \Homeo(\mathbb{R}/\mathbb{Z} \times [0,1])$ be defined by
 \[
a_0(f) (\theta,r)=(f(\theta), \tilde{f}(r+\tilde{\theta})-\tilde{f}(\tilde{\theta}))
\]
where $\tilde{f}\in \Homeo_0(\mathbb{R})$ is any lift of $f\in \Homeo_0(S^1)$ to $\Homeo_0(\R)$ and $\tilde{\theta}\in \mathbb{R}$ represents a lift of $\theta \in \mathbb{R}/\mathbb{Z}$.  Note that this is well defined and independent of the choice of lifts. 

Let $T^k(\theta, r)=(\theta+kr,r)$, this is the $k$th power of a standard Dehn twist in the closed annulus $\mathbb{R}/\mathbb{Z} \times [0,1]$.  Let $a_k(f)=T^ka_0(f)(T^k)^{-1}$. For example, 
\[
a_1(f)(\theta,r)=T^1a_0(f)(\theta-r,r)=
T^1(f(\theta-r),\tilde{f}(\tilde{\theta})-\tilde{f}(\tilde{\theta}-r))=
(f(\theta),\tilde{f}(\tilde{\theta})-\tilde{f}(\tilde{\theta}-r)).
\]
The fact that both $a_0(f)$ and $a_1(f)$ have the same first coordinate is a pure coincidence.  This is not true for $a_k$ when $k\neq 0,1$, and so $a_0$ and $a_1$ will play a special role. 

Let $\lambda:L-K\to \{0,1\}$ be a function which is constant on each component. For $a<b \in [0,1]$, let $n_{a,b}$ be the affine normalization $n_{a,b}: S^1\times [0,1]\to S^1 \times [a,b]$ given by $n_{a,b}(\theta,r)=(\theta, a+r(b-a))$. We denote by $\rho_{K,\lambda}$ the action such that, for each component $(a,b)$ of $L-K$, the restriction of $\rho_{K, \lambda}$ to its closure is given by
\[
\rho_{K,\lambda}|_{S^1\times [a,b]}(f)=n_{a,b}\circ a_{\lambda((a,b))}(f) \circ n_{a,b}^{-1}.
\]
It is easy to check that $\rho_{K,\lambda}$ is indeed a continuous group action, since the first coordinate of $a_0$ and $a_1$ is just the standard action on $S^1$, and the second coordinate is also continuous. 

We prove the following.

\begin{thm}[Classification of $\Homeo_0(S^1)$ actions on the disc]\label{Mil}
Any nontrivial homomorphism $\rho: \Homeo_0(S^1)\to \Homeo_0(\mathbb{D}^2)$ is conjugate to $\rho_{K,\lambda}$ for some $K,\lambda$ as above. Two homomorphisms $\rho_{K,\lambda}$ and $\rho_{K',\lambda'}$ are conjugate to each other if and only if there is a homeomorphism $h:[0,1]\to [0,1]$ with $h(K)=K'$, and such that $\lambda$ and $h^{-1}\circ \lambda' h$ agree on all but finitely many components of $[0,1]-K$ on each closed interval in the interior of $[0,1]$.
\end{thm}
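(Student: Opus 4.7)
The plan is to combine the Orbit Classification Theorem \ref{thm:orbit_classification} with the classical rigidity of $SO(2)$-actions on the disc (von Ker\'ekj\'art\'o's theorem). Since $\Homeo_0(S^1)$ is simple, any nontrivial $\rho$ is injective, so $\rho|_{SO(2)}$ is an effective continuous $SO(2)$-action on $\mathbb{D}^2$. By von Ker\'ekj\'art\'o's theorem I may conjugate $\rho$ so that $\rho|_{SO(2)}$ is the standard rigid rotation in polar coordinates, whence every $\rho$-orbit is $SO(2)$-invariant and consists of concentric circles (possibly together with the center).

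By Theorem \ref{thm:orbit_classification} each orbit is a continuous injective image of a cover of $\Conf_n(S^1)$ with $n\le 2$ by the dimension bound. For $n=1$, the computation in Section \ref{sec:E_subgroup} shows $E_1=\pi_1(S^1)$, so the only admissible cover of $\Conf_1(S^1)=S^1$ is $S^1$ itself. For $n=2$, the admissible covers are the M\"obius band $\Conf_2(S^1)$ and its orientation double cover $\PConf_2(S^1)\cong S^1\times (0,1)$; invariance of domain applied to the orientable target $\mathbb{D}^2$ rules out the M\"obius band. Combined with the $SO(2)$-normalization, the orbits are exactly: the center, circles $\{r=r_0\}$, and open annular regions $\{a<r<b\}$. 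Define $K\subset [0,1]$ to be $\{0\}$ together with all radii $r_0$ at which $\{r=r_0\}$ is a $1$-dimensional orbit; this is a closed subset containing $0$ whose complement in $L$ is the disjoint union of the intervals corresponding to the $2$-dimensional orbits.

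The main technical step classifies the action on a single annular orbit $A=\{a<r<b\}$. Transitivity of $\rho$ and the orbit classification yield a $\Homeo_0(S^1)$-equivariant homeomorphism $e:\PConf_2(S^1)\to A$ intertwining the natural action $a_0$ with $\rho|_A$. Equivariance under the standard $SO(2)$-subaction on both sides forces $e$ to take the form $(\theta,r)\mapsto (\theta+\phi(r),\psi(r))$ for some continuous $\phi:(0,1)\to S^1$ and radial homeomorphism $\psi$. Matching the boundary behavior with the standard rotation on the adjacent $K$-circles (or the center) forces $\phi$ to extend continuously to the closed interval, so after normalizing the boundary values $\phi$ has a well-defined integer winding number $k$, giving $\rho|_A = n_{a,b}\circ a_k\circ n_{a,b}^{-1}$. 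The rescaled Dehn twist $n_{a,b}\circ T\circ n_{a,b}^{-1}$ is the identity on both $\{r=a\}$ and $\{r=b\}$, so extends by the identity to an element of $\Homeo_0(\mathbb{D}^2)$; conjugation by its $j$th power shifts $k$ by $j$, allowing reduction to a canonical representative in $\{0,1\}$, which becomes $\lambda(a,b)$. Performing this component-by-component assembles $\rho$ as some $\rho_{K,\lambda}$.

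For uniqueness, a conjugation between $\rho_{K,\lambda}$ and $\rho_{K',\lambda'}$ carries the respective $SO(2)$-subactions to each other; after re-normalizing it preserves the foliation by concentric circles and so induces a homeomorphism $h:[0,1]\to [0,1]$ with $h(K)=K'$. The $\lambda$-equivalence reflects exactly which collections of rescaled Dehn twists can be assembled into a single disc homeomorphism: components accumulating at some radius $c$ would force the angular twist rate $1/(b-a)$ to diverge, violating continuity at $c$, so only finitely many components per compact interior interval can be altered. The main obstacle I anticipate is the single-component step --- carefully identifying the integer winding $k$, handling the boundary extension of $\phi$ at $r=a,b$, and verifying that the Dehn-twist conjugations globally produce the canonical $\{0,1\}$-valued representative $\lambda$ without introducing further invariants.
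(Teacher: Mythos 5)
Your outline follows the paper's strategy closely: normalize $SO(2)$ via von Ker\'ekj\'art\'o/Lemma~\ref{lem:SO2}, use the Orbit Classification Theorem to identify orbits as the fixed center, circles, and copies of $\PConf_2(S^1)$ (with the M\"obius band excluded by orientability), write the conjugating map on each annular orbit in the form $(\theta,r)\mapsto(\theta+\phi(r),\psi(r))$, and then reduce the winding number by rescaled Dehn twists. That is essentially the paper's argument through Section~\ref{subsec:2nd}.

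There is, however, a genuine gap at the step you describe as ``Performing this component-by-component assembles $\rho$ as some $\rho_{K,\lambda}$.'' You conjugate on each $2$-dimensional orbit independently by some power $j$ of the rescaled Dehn twist to reduce the winding number to $\{0,1\}$, but if infinitely many of those powers are nonzero within a compact sub-annulus (say orbits $(a_n,b_n)\to c$ with winding numbers $k_n\to\infty$), the union of those local conjugations is \emph{not} a homeomorphism of $\mathbb{D}^2$: the map is supported on $\bigcup(a_n,b_n)$ but has unbounded twist near $r=c$ and fails to extend continuously. To rule this out you need precisely Lemma~\ref{lem:2nd} of the paper: on any compact sub-annulus away from $0$, only finitely many orbits have winding number outside $\{0,1\}$. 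This is not a soft consequence of ``twist rate $1/(b-a)$ diverging''---winding number $k$ is independent of the width $b-a$---and the paper's proof is a nontrivial argument tracking the $1$-dimensional $G_0$-orbits inside each annular orbit and showing that convergent vertical sequences of points on those $G_0$-orbits must have their $\phi$-parameters converge to the same angular coordinate, which forces degree $\leq 1$ eventually. Your proposal mentions an accumulation heuristic only in the conjugacy-classification part, and even there as a plausibility statement rather than an argument; the existence part needs the full lemma.

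Two further points that your proposal glosses over. First, to make the integer winding number $k$ well-defined, you need $\phi$ to extend continuously to the closed interval $[a,b]$ with specified boundary values; in the paper this is arranged by a preliminary conjugation (the $h_1$ step) that normalizes the action on the closed set $p^{-1}(K)$ to be exactly coning, via Tietze extension of the map recording $\Fix(\rho(G_0))$ on $K$-circles. You invoke ``matching boundary behavior with the standard rotation'' but do not perform this normalization, and without it the claimed boundary values of $\phi$ (and hence the degree) need not exist. Second, even after knowing all degrees are in $\{0,1\}$, assembling the per-orbit conjugations into a single homeomorphism (the paper's $h_4$, $h_5$) requires the corresponding functions $g_n$ to tend to zero as the orbits shrink; this is exactly the continuity argument at the end of the paper's final conjugation step, again resting on the $\phi_n$-convergence from Lemma~\ref{lem:2nd}. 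Your plan is sound in outline, but without these pieces the central ``assemble component-by-component'' step does not produce a homeomorphism.
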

The reader may wonder why $\lambda$ only takes values in $\{0,1\}$. This is because that $\{a_k\}$ are all conjugate to one another, which means we can perform a conjugation supported on finitely many annuli to convert them all to $a_0$. However, as we will show later, we cannot simultaneously conjugate infinitely many such maps over a compact set $S^1 \times [r,1] \subset S^1 \times (0,1]$, and only finitely many of them can take values outside of $\{0,1\}$.

The proof will also show that actions on the half-open annulus (up to conjugacy) are the same as those on the disc under the identification of $[0,1) \times S^1$ with $\DD^2 - 0$.   Actions on the open and closed annulus, the sphere, and the torus have an analogous classification which can be obtained by the same proof.  

We will use the following classical result on $SO(2)$ actions. 

\begin{lem}[\cite{MZ} Ch 6.5] \label{lem:SO2}
Any faithful, continuous action of $SO(2)$ on $\mathbb{D}^2$ is conjugate to the standard action by rotations.
\end{lem}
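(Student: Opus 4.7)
The plan is to follow the classical approach, proving the lemma in three stages: existence of a global fixed point, determination of the orbit structure, and explicit construction of the conjugacy.

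For existence of a fixed point, let $F_n \subset \mathbb{D}^2$ denote the fixed set of the action of $e^{2\pi i / n} \in SO(2)$. Since this element is isotopic to the identity and $\chi(\mathbb{D}^2) = 1$, the Lefschetz fixed point theorem gives $F_n \neq \emptyset$. The family $\{F_{n!}\}$ is a nested sequence of closed nonempty subsets of the compact space $\mathbb{D}^2$ (since $F_{(n+1)!} \subset F_{n!}$), hence has nonempty intersection. A point $p$ in this intersection is fixed by a dense subgroup of $SO(2)$, and by continuity of the action, fixed by all of $SO(2)$.

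Next, I would determine the orbit structure. Since every proper closed subgroup of $SO(2)$ is finite cyclic, every orbit is either a point or a topological circle; and since the action is faithful, at least one orbit is a circle. Using the topological slice theorem for compact Lie group actions on 2-manifolds (the crux of the reference \cite{MZ}), the restriction of the action to a small invariant neighborhood of $p$ is topologically conjugate to an effective linear $SO(2)$-action on $\mathbb{R}^2$, i.e., to standard rotations. In particular, $p$ is an isolated fixed point, a punctured neighborhood of $p$ is foliated by $SO(2)$-orbit circles, and each such circle separates $\mathbb{D}^2$ into two components. A continuity/compactness argument then shows $F = \{p\}$, every non-fixed orbit is an embedded circle, and the quotient map $\pi : \mathbb{D}^2 \to \mathbb{D}^2/SO(2)$ identifies the orbit space with $[0,1]$, where $0$ corresponds to $p$ and $1$ to an orbit in $\partial \mathbb{D}^2$.

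For the conjugacy, the restricted action on $\mathbb{D}^2 \setminus \{p\}$ is free with contractible quotient $(0,1]$, so the principal $SO(2)$-bundle $\pi$ admits a continuous section $\gamma : (0,1] \to \mathbb{D}^2 \setminus \{p\}$; set $\gamma(0) = p$. Define $h : \mathbb{D}^2 \to \mathbb{D}^2$ by $h(r e^{2\pi i \theta}) = \rho(\theta) \gamma(r)$, where $\rho$ denotes the given action. This is continuous (including at $r = 0$, by the slice description near $p$), bijective by uniqueness of the section coordinates on each orbit, and $SO(2)$-equivariant by construction, intertwining the standard rotation action with $\rho$. As a continuous bijection between compact Hausdorff spaces, $h$ is a homeomorphism, giving the desired conjugacy. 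The main obstacle is the second step: a genuinely topological argument for local linearization of the $SO(2)$-action near the fixed point. In the smooth setting one can invoke Bochner's theorem, but here the result is purely topological and relies on Montgomery--Zippin's deeper theory of compact transformation groups, which is precisely what the citation \cite{MZ} provides.
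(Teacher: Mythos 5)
The paper offers no argument of its own here: the lemma is quoted as a classical result straight from Montgomery--Zippin \cite{MZ}, and your sketch is the standard derivation of that result, with its one genuinely hard ingredient (local linearization/slice theory for topological $SO(2)$-actions on surfaces) delegated to the very same reference, so this is essentially the paper's approach, expanded. The remaining global steps you assert rather than prove (uniqueness of the fixed point, freeness of the action on $\mathbb{D}^2\setminus\{p\}$ rather than mere faithfulness, and that $\partial\mathbb{D}^2$ is a single orbit) are correct and can be filled in by the continuity/compactness arguments you indicate, but be aware they are themselves part of the content of the cited classification rather than formal consequences of the slice theorem alone.
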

\noindent This is also true for the sphere and the (open, closed, or half-open) annulus, while all actions of $SO(2)$ on the torus $S^1 \times S^1$ are conjugate to rotation of one $S^1$ factor.  

%
%
%

Now suppose that $\rho: \Homeo_0(S^1) \to \Homeo_0(\DD^2)$ is a representation.  Using the automatic continuity result of Rosendal and Solecki \cite{RS}, we  know that $\rho$ is continuous, and by simplicity of $\Homeo_0(S^1)$ we may assume $\rho$ is faithful.  By Lemma \ref{lem:SO2}, we can also assume that the restriction of $\rho$ to $SO(2)$ agrees with the standard action by rotations.  
We will apply several successive conjugations to put $\rho$ in the form stated in Theorem \ref{Mil}. 

\subsection{First conjugation: coning on a closed, invariant set}
Fix $s \in S^1$ and let $G_s$ denote the stabilizer of $s \in \Homeo_0(S^1)$.  
Later, we will use an identification of $S^1$ with $\R/\Z$ and take $s$ to be $0$ in this identification.  
Thinking of $\DD^2$ as the unit disc in $\R^2$, the first conjugacy will put $\Fix(\rho(G_s))$ on the $x$-axis, so that the restriction of $\rho$ to the set $SO(2)(\Fix(\rho(G_p)))$ agrees with coning.  

\begin{lem}
The fixed point $O$ of $\rho(SO(2))$ is a global fixed point for $\rho(\Homeo_0(S^1))$.
\end{lem}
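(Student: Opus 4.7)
My plan is to apply the Orbit Classification Theorem directly to the $\rho$-orbit of $O$ and show it must be a single point.

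Let $H := \{f \in \Homeo_0(S^1) : \rho(f)(O) = O\}$ denote the stabilizer of $O$. Since $\rho$ is continuous (by the automatic continuity of Rosendal--Solecki invoked just before the lemma) and $\{O\}$ is closed in $\DD^2$, the subgroup $H$ is closed. By hypothesis $\rho(SO(2))$ fixes $O$, so $SO(2) \subset H$. Because the $\rho$-orbit of $O$ is the image of a continuous injective map of $\Homeo_0(S^1)/H$ into the finite-dimensional space $\DD^2$, Lemma \ref{Invariance} implies that $H$ is a small quotient subgroup of $\Homeo_0(S^1)$.

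By Theorem \ref{thm:small_quotient_homeo} there is then a finite set $X \subset S^1$ with
\[
\Stab(X)_0 \subset H \subset \Stab(X).
\]
If $X = \emptyset$, then $\Stab(X) = \Homeo_0(S^1)$, so $H = \Homeo_0(S^1)$, which is exactly the statement that $O$ is a global fixed point. If $X \neq \emptyset$, then $SO(2) \subset \Stab(X)$ would force every rotation of $S^1$ to setwise preserve $X$. This is impossible: the $SO(2)$-orbit of any $x \in X$ is all of $S^1$, so any non-empty $SO(2)$-invariant subset of $S^1$ must be infinite. This contradiction rules out the case $X \neq \emptyset$ and finishes the proof.

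There is essentially no obstacle to this strategy; the content of the argument is just the observation that the presence of the full rotation group in the stabilizer of $O$ is incompatible with the orbit being a cover of $\Conf_n(S^1)$ for any $n \geq 1$, since no non-empty finite subset of $S^1$ has full rotational symmetry. This forces the orbit to be $\Conf_0(S^1)$, i.e.\ a single point.
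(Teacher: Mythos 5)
Your proof is correct, and it takes a genuinely different (and more self-contained) route than the one the paper gestures at. The paper's text disposes of this lemma by reference: it cites Militon's direct argument and, as an alternative, the classification of closed subgroups of $\Homeo_0(S^1)$ in \cite{GM}, noting that $\Stab(O)$ is closed, contains $SO(2)$, and contains everything commuting with a nontrivial rotation. You instead run the paper's own Orbit Classification machinery: $\Stab(O)$ is a closed subgroup, it has small quotient because the orbit injects continuously into the $2$-dimensional space $\DD^2$ (via Lemma \ref{Invariance}), so Theorem \ref{thm:small_quotient_homeo} pins it between $\Stab(X)_0$ and $\Stab(X)$ for a finite $X$, and then the presence of $SO(2)$ in the stabilizer forces $X = \emptyset$ since no non-empty finite subset of $S^1$ is invariant under all rotations. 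What this buys you is self-containment within the paper's framework; what the paper's citation buys is brevity and avoiding re-deriving a statement already available in the literature. One minor stylistic suggestion: when you conclude ``$H$ is small quotient'' from Lemma \ref{Invariance}, it is worth spelling out that a putative injective $\DD^3 \hookrightarrow \Homeo_0(S^1)$ descending injectively to $\Homeo_0(S^1)/H$ would, composed with the orbit map, give an injective continuous image of (an open subset of) $\R^3$ in $\DD^2$, which is what Lemma \ref{Invariance} forbids; you assert this more tersely than the definition of small quotient strictly demands.
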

\begin{proof}
This lemma has a direct proof which is given in \cite[Proposition 6.1]{Militon}.  An alternative quick argument can be obtained by quoting the general classification \cite[Theorem 1.1]{GM}, since  the stabilizer $\Stab(O)$ of $O$ under $\rho$ contains $SO(2)$ (in particular, it contains a nonconstant path), and also contains any element that commutes with a nontrivial element of $SO(2)$. 
\end{proof}

Let $L=[0,1]$ be the orbit space of the standard $SO(2)$ action on $\mathbb{D}^2$, where $r\in [0,1]$ represents the circle of radius $r$, and let $p$ denote the projection $p: \mathbb{D}^2\to L=[0,1]$.
With the exception of $O$, every other $\rho(\Homeo_0(S^1))$-orbit is 1 or 2-dimensional, since nontrivial orbits under the subgroup $\rho(SO(2))$ are 1-dimensional.  Let $Q\subset \mathbb{D}^2$ be the union of all the 1-dimensional orbits and $O$. Then $K:=p(Q)\subset [0,1]$ is a closed subset. (Each 2-dimensional orbit is open by invariance of domain.) The action of $\Homeo_0(S^1)$ is standard on each 1-dimensional orbit, so the point stabilizer $G_s$ fixes a single point.  Thus, we may define
 $h_1': K - \{0\} \to \mathbb{D}^2$ by 
\[h_1'(x)=\Fix(\rho(G_s))\cap p^{-1}(x).\] Away from $O$, this function is uniquely determined by its projection to the $S^1$ coordinate, which we denote by $g_1':K-\{0\}\to S^1$.
\begin{lem}
The functions $h_1'$ and $g_1'$ are continuous on $K-\{0\}$.
\end{lem}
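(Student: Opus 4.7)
The plan is to first show that $h_1'(x)$ is a single point for each $x \in K \setminus \{0\}$, and then prove continuity by a short argument using continuity of $\rho$.

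For the first step, fix $x \in K \setminus \{0\}$. By hypothesis the circle $p^{-1}(x)$ is a $1$-dimensional orbit of $\rho(\Homeo_0(S^1))$. I would invoke the Orbit Classification Theorem (Theorem \ref{thm:orbit_classification}) applied to $M = S^1$, together with the computation at the end of Section \ref{sec:E_subgroup} identifying the admissible covers of $\Conf_n(S^1)$. A $1$-dimensional orbit must come from $n=1$, and since the evaluation map $\pi_1(\Homeo_0(S^1)) \to \pi_1(\PConf_1(S^1))$ is surjective (both groups are $\Z$), the maximal admissible cover is $S^1$ itself, i.e. $\Stab(b) = \Stab(b)_0$ for any point $b \in S^1$. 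Thus the orbit map $\Homeo_0(S^1)/\Stab(b) \to p^{-1}(x)$ is a continuous bijection between compact Hausdorff spaces, hence a homeomorphism conjugating the $\rho$-action on $p^{-1}(x)$ to the standard action of $\Homeo_0(S^1)$ on $S^1$. Since $G_0 = \Stab(0)$ fixes exactly one point under the standard action, $\Fix(\rho(G_0)) \cap p^{-1}(x)$ is a single point and $h_1'$ is well defined.

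For continuity of $h_1'$, suppose $x_n \to x$ in $K \setminus \{0\}$. Since $h_1'(x_n)$ lies in the compact disc $\mathbb{D}^2$, after passing to a subsequence it converges to some $z$, and continuity of $p$ gives $p(z) = x$, so $z \in p^{-1}(x)$. If $z \neq h_1'(x)$, then by the first step there exists $f \in G_0$ with $\rho(f)(z) \neq z$. Continuity of $\rho(f)$ yields $\rho(f)(h_1'(x_n)) \to \rho(f)(z)$; but $h_1'(x_n) \in \Fix(\rho(G_0))$ forces $\rho(f)(h_1'(x_n)) = h_1'(x_n) \to z$, which contradicts $\rho(f)(z) \neq z$. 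Hence every convergent subsequence of $h_1'(x_n)$ has limit $h_1'(x)$, and by compactness $h_1'(x_n) \to h_1'(x)$. Continuity of $g_1'$ is then immediate, since $g_1'$ is the composition of $h_1'$ with the continuous angular projection $\mathbb{D}^2 \setminus \{O\} \to S^1$, and $h_1'$ takes values in $\mathbb{D}^2 \setminus \{O\}$ on $K \setminus \{0\}$.

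The only non-routine step is the uniqueness claim for $\Fix(\rho(G_0)) \cap p^{-1}(x)$, and even this is essentially bookkeeping once the orbit classification and the calculation of admissible covers in dimension one are invoked; the continuity argument itself is a standard ``stabilizers are closed under limits'' trick and should present no difficulty.
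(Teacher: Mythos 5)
Your proof is correct, and it takes a genuinely different (and longer) route than the paper's. The paper's argument is the slick topological one: it observes that $h_1'^{-1}(C)=p(\Fix(\rho(G_0))\cap C)$ for any closed $C\subset\mathbb D^2$, that $\Fix(\rho(G_0))$ is closed (an intersection of fixed-point sets of homeomorphisms), and that $p$ is proper and hence a closed map; so preimages of closed sets under $h_1'$ are closed. That's the whole proof, three lines, and it doesn't even invoke compactness of $\mathbb D^2$ beyond properness of $p$. Your argument instead runs a sequential-compactness/unique-limit argument: extract a convergent subsequence of $h_1'(x_n)$, show its limit lies over $x$ and is fixed by $\rho(G_0)$, then use uniqueness of the fixed point in $p^{-1}(x)$ to conclude. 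Both are valid; the paper's is shorter, yours is more hands-on.

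One thing your write-up supplies that the paper leaves implicit is the well-definedness of $h_1'$, i.e.\ that $\Fix(\rho(G_0))\cap p^{-1}(x)$ is a single point for $x\in K\setminus\{0\}$. Your deduction from the orbit classification theorem and the admissible-cover computation in dimension one is correct (the orbit is an injective continuous image of $S^1$ containing the $SO(2)$-orbit $p^{-1}(x)$, hence equal to it, and the action is conjugate to the standard one, where $G_0$ has a unique fixed point). The paper treats this as part of the definition of $h_1'$. Note, though, that the paper's closed-preimage identity $h_1'^{-1}(C)=p(\Fix(\rho(G_0))\cap C)$ itself silently uses this uniqueness to go from ``some $y\in\Fix(\rho(G_0))\cap C$ with $p(y)=x$'' to ``$h_1'(x)\in C$,'' so your added detail is not wasted effort. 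The only cosmetic nit: in your uniqueness step you don't really need the full ``since the evaluation map is surjective'' detour once you've observed that the orbit is an embedded circle containing the $SO(2)$-orbit $p^{-1}(x)$; simplicity of $\Homeo_0(S^1)$ plus Lemma~\ref{lem:SO2} already pins the action on that circle down to the standard one. But what you wrote is correct.
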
 
\begin{proof}
Continuity of $h_1'$ implies continuity of $g_1'$. For $h_1'$, we need to show that for a closed set $C\subset \mathbb{D}^2$, the set $h_1'^{-1}(C)$ is closed. We have the following computation:
\[
h_1'^{-1}(C)=p(\Fix(\rho(G_s))\cap C).
\]
Since $\Fix(\rho(G_s))$ is a closed set and $p$ is a proper map, we know that $h_1'^{-1}(C)$ is a closed set as well. 
\end{proof}
By the Tietze extension theorem, we can extend the function $g_1'$ to a continuous function $g_1: (0,1]\to S^1$. 
Identify $S^1 = \mathbb{R}/\mathbb{Z}$, so $s$ is identified with $0$, and $G_s = G_0 \subset \Homeo_0(S^1)$.   
Since $\rho(G_0)$ fixes $g_1(r)$ for $r\in K$, the action of $\rho(f)$ on $p^{-1}(r)$ agrees with $\rho(f)(\theta,r)=(f(\theta-g_1(r))+g_1(r),r)$.  Define a homeomorphism $h_1$ of $\mathbb{D}^2$ by 
\[
h_1(\theta,r)=(\theta+g_1(r),r)\]
and $h_1(O)=O$.  
Then $h_1^{-1} \circ \rho \circ h_1|_{p^{-1}(K)}$ is ``coning", i.e. $h_1^{-1} \circ \rho(f) \circ h_1(\theta,r)=(f(\theta),r)$ whenever $r\in K$. From now on, we replace $\rho$ with its conjugate $h_1^{-1} \circ \rho \circ h_1$.

\subsection{Building block: Indecomposable actions}
Call an action of $\Homeo_0(S^1)$ on a surface \emph{indecomposible} if there are no zero or 1 dimensional orbits.  The following is an easy consequence of Theorem \ref{thm:orbit_classification}. 
\begin{cor} \label{cor:irred}
Up to conjugacy, there are only two indecomposible actions of $\Homeo_0(S^1)$ on connected surfaces: the standard action on $\Conf_2(S^1)$ and the standard action on $\PConf_2(S^1)$, which is the space of ordered pair of points on $S^1$.
\end{cor}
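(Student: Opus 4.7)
The plan is to combine the Orbit Classification Theorem (Theorem \ref{thm:orbit_classification}) with the description of admissible covers worked out for $M=S^1$ in Section \ref{sec:E_subgroup}, and apply invariance of domain to pin everything down.

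Suppose $\rho\colon \Homeo_0(S^1) \to \Homeo(N)$ is an indecomposable action on a connected surface $N$. Automatic continuity (Theorem \ref{thm:ac_homeo}) guarantees $\rho$ is continuous, so Theorem \ref{thm:orbit_classification} applies: every orbit of $\rho$ is the continuous injective image of an admissible cover of $\Conf_n(S^1)$ for some $n$. Under the paper's convention $\Conf_n(S^1)$ is an $n$-manifold, and every orbit has dimension exactly two by indecomposability, so $n=2$ for every orbit. Invariance of domain applied to the continuous injection from a $2$-manifold cover of $\Conf_2(S^1)$ into the $2$-manifold $N$ shows each orbit is open. Since orbits partition $N$ and $N$ is connected, there is a single orbit equal to $N$, obtained as the continuous bijective image of some admissible cover $Z$ of $\Conf_2(S^1)$.

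A second application of invariance of domain upgrades this continuous bijection $Z \to N$ to a homeomorphism, and this homeomorphism is $\Homeo_0(S^1)$-equivariant by construction of the orbit map. Hence $\rho$ is conjugate to the standard $\Homeo_0(S^1)$-action on $Z$.

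To finish, one must list the admissible covers of $\Conf_2(S^1)$. From the dimension-one discussion in Section \ref{sec:E_subgroup}, the admissible covers are exactly the intermediate covers of $\PConf_2(S^1)\to \Conf_2(S^1)$, i.e.\ of the orientation double cover of the open M\"obius band $\Conf_2(S^1)$ by the open annulus $\PConf_2(S^1)$. (Concretely, a full rotation in $\Homeo_0(S^1)$ evaluated at $\{0,\tfrac12\}$ traces out a generator of $\pi_1(\PConf_2(S^1))=\Z$, so the image $E_2$ of the evaluation map in $\pi_1(\Conf_2(S^1))=\Z$ is exactly the index-two subgroup corresponding to this double cover.) Since it is only a two-sheeted cover, the only intermediate $Z$'s are $\PConf_2(S^1)$ and $\Conf_2(S^1)$ themselves, giving the two possibilities in the statement. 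The only real subtlety specific to this corollary is the dimension count forcing $n=2$ together with the invariance-of-domain step; everything else is a direct appeal to previously established results.
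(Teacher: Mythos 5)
Your argument is correct and follows essentially the same route as the paper: invariance of domain to make orbits open, connectedness to force a single orbit, Theorem \ref{thm:orbit_classification} plus the Section \ref{sec:E_subgroup} classification of admissible covers for $S^1$ to identify that orbit, and a second invariance-of-domain step to upgrade the continuous equivariant bijection to a homeomorphism. The paper's own proof is more terse and leaves the admissible-cover count, the $n=2$ dimension argument, and the bijection-to-homeomorphism upgrade implicit; you have simply made each of those steps explicit.
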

\begin{proof}
Let $S$ be a connected surface with an indecomposible $\Homeo_0(S^1)$ action. Since $S$ is 2-dimensional, by invariance of domain every orbit is an open subsurface of $S$, and by Proposition \ref{lift}, each is homeomorphic to either $\PConf_2(S^1)$ or $\Conf_2(S^1)$. Since $S$ is connected, it cannot be covered by disjoint open subsurfaces, so is either $\PConf_2(S^1)$ or $\Conf_2(S^1)$.
\end{proof}

Since $\Conf_2(S^1)$ is homeomorphic to the M\"obius band, this orbit type does not occur when the surface is orientable.  So we may focus on $\PConf_2(S^1)$, which is homeomorphic to an open annulus.   These are precisely the 2-dimensional orbits that occur in Theorem \ref{Mil}.  

\begin{claim} \label{claim:conj} 
The conjugation between the action $a_0$ and $\Homeo_0(S^1)$ on $\PConf_2(S^1)$ is given by the homeomorphism $H: \PConf_2(S^1)\to S^1\times (0,1)$ such that $H(x,y)=(x,y-x)$ where $y-x$ is the distance between $x,y$ along the anti-clockwise direction.
\end{claim}
\begin{proof}
To verify the conjugation, for $f\in \Homeo_0(S^1)$, we need to show that $ H\circ f = a_0(f)\circ H$. For $(x,y)\in \PConf_2(S^1)$, we have $H\circ f(x,y)=(f(x),f(y)-f(x))\in S^1\times (0,1)$.   We also have
\[
a_0(f)\circ H(x,y)= a_0(f)(x, y-x) = (f(x), \tilde{f}(\tilde{x}+(y-x))-\tilde{f}(\tilde{x})),
\]
Now $\tilde{x}+y-x$ is a lift of $y$.  Denoting this by $\tilde{y}$ we have 
\[
a_0(f)\circ H(x,y)=  (f(x), \tilde{f}(\tilde{y} -\tilde{f}(\tilde{x})) = (f(x),  f(y)-f(x) \in S^1 \times [0,1].
\] 
Thus, we know that these two actions are conjugate. 
\end{proof}

\subsection{Second conjugation: 2-dimensional orbits}  \label{subsec:2nd} 
Let $S^1\times (a,b)$ be a $\rho$-invariant open annulus on which $\rho$ is irreducible.  By Corollary \ref{cor:irred}, there exists $h\in \Homeo(S^1\times (a,b))$ such that 
\[
h\circ \rho(f)\circ h^{-1}=n_{a,b} \circ a_0(f)\circ n_{a,b}^{-1}.
\]
We also have $\rho(r_\theta)=r_\theta=a_0(r_\theta)$, and therefore
\[
h\circ r_\theta=r_\theta\circ h.
\]
Writing $h$ in coordinates as $h(\alpha,r):=(g(\alpha,r),k(\alpha,r))$, the coordinate functions satisfy
\[
g(\alpha+\theta,r)=g(\alpha,r)+\theta,
\]
\[
k(\alpha+\theta,r)=k(\alpha,r)
\]
for any $\theta$. This shows that $k(\alpha,r)=k(r)$ and $g(\alpha,r)=\alpha+g(r)$ where $g(r)$ and $k(r)$ are continuous functions of $r \in [a,b]$ such that $g(a)=g(b)=0$ and $k(a)=a$ and $k(b)=b$. Since for each 2-dimensional orbit of the form $S^1\times (a,b)$, we have that $k(a)=a,k(b)=b$ and $k:[a,b]\to [a,b]$ is increasing, these $k$ glue together to give a continuous function, which extends to a continuous function that is the identity outside of the union of the 2-dimensional orbits of $\rho$.  Abusing notation, denote this function also by $k$, and let $h_2(\theta,r)=(\theta,k(r))$.  This defines a homeomorphism of $\mathbb{D}^2$. 
Going forward, we replace $\rho$ with its conjugate $h_2\circ \rho \circ h_2^{-1}$.
This simplifies the form of the associated function $h$ conjugating $\rho$ to the $a_0$ action given by Corollary \ref{cor:irred}, and we now have 
\begin{equation} \label{normalform}
h(\alpha,r)=(g(r)+\alpha,r).
\end{equation} 

Say that a 2-dimensional orbit has {\em degree d} if this map $g:[a,b]\to S^1$ is isotopic relative to the boundary to a degree $d$ map (Recall that $g(a)=g(b)=0$, so the notion of degree makes sense here).  Our next goal is to prove the following.  
\begin{lem}\label{lem:2nd}
Let $J \subset (0, 1]$ be a closed interval bounded away from $0$.  Among the 2-dimensional orbits that intersect $J \times S^1$, only finitely many have degree not in $\{0, 1\}$. 
\end{lem}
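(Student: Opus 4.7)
The plan is to argue by contradiction using continuity of $\rho$, together with an explicit formula for the angular component of $a_k(f)$ that singles out the values $k \in \{0, 1\}$.  Suppose there are infinitely many 2-dimensional orbits $(a_i, b_i)$ meeting $J$ with degree $k_i \notin \{0, 1\}$.  By compactness of $J$ and disjointness of the orbits, after passing to a subsequence one has $a_i, b_i \to r_0$ for some $r_0 \in J$; since $K$ is closed and each $b_i \in K$, we have $r_0 \in K$, so $\rho(f)(\alpha_0, r_0) = (f(\alpha_0), r_0)$ by coning for any $\alpha_0 \in S^1$.

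The key computation, which I will carry out first, is a direct expansion of $a_k(f) = T^k a_0(f) T^{-k}$: for $f \in \Homeo_0(S^1)$ with lift $\tilde f(x) = x + \phi(x)$, the first coordinate of $a_k(f)(\alpha, t)$ is
\[
\alpha + (1 - k)\phi(\tilde\alpha - k t) + k\phi(\tilde\alpha - (k-1)t) \pmod{1}.
\]
This collapses to $f(\alpha) = \alpha + \phi(\tilde\alpha)$ independently of $t$ precisely when $k \in \{0, 1\}$; for all other $k$ the expression depends genuinely on $t$.  Since the restriction of $\rho$ to orbit $i$ coincides (in the normalized coordinates of Subsection~\ref{subsec:2nd}) with $a_{k_i}$, this formula governs the angular component of $\rho(f)(\alpha_0, r)$ for $r \in (a_i, b_i)$, with $t = (r-a_i)/(b_i-a_i)$.

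After a further extraction, either $k_i = k$ is constant for some fixed $k \notin \{0, 1\}$ (Case A) or $|k_i| \to \infty$ (Case B).  In Case A, fix $\alpha_0 \in S^1$ and choose $t^* \in (0, 1)$ so that the three points $\tilde\alpha_0$, $\tilde\alpha_0 - k t^*$, $\tilde\alpha_0 - (k-1) t^*$ are pairwise distinct modulo $1$ (possible precisely because $k \ne 0, 1$).  Let $\phi$ be a small smooth bump supported near $\tilde\alpha_0 - (k-1) t^* \bmod 1$ of height $\epsilon < 1/|k|$, vanishing near the other two points.  Choosing $r_i = a_i + t^* (b_i - a_i)$ gives $r_i \to r_0$ while keeping $t_i \equiv t^*$; the displayed formula then yields limiting angular coordinate $\alpha_0 + k\epsilon$, which is not congruent to $f(\alpha_0) = \alpha_0$ modulo $1$ since $0 < k\epsilon < 1$, contradicting continuity of $\rho(f)$ at $(\alpha_0, r_0)$.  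In Case B, take $\phi$ a smooth bump with $\phi(1/2) = \epsilon$ \emph{irrational} and $\phi(0) = 0$, set $\alpha_0 = 0$, restrict to a sub-subsequence of \emph{even} $k_i$, and choose $t_i = 1/2 + \delta_i$ with $\delta_i = o(1/k_i^2)$.  Then $\tilde\alpha_0 - (k_i - 1) t_i \to 1/2$ and $\tilde\alpha_0 - k_i t_i \to 0$ modulo $1$, so the limiting angular coordinate of $\rho(f)(\alpha_0, r_i)$ equals $k_i \epsilon \pmod 1$.  Weyl's equidistribution theorem then produces a further subsequence along which $\{k_i\epsilon\}$ converges to a prescribed value $c \ne 0$, again violating continuity.

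The main technical obstacle lies in Case B: one must coordinate the modular limits of $(k_i-1)t_i$, $k_i t_i$, and $k_i \epsilon$ along a single subsequence, which is accomplished by the parity restriction on $k_i$ combined with equidistribution.  Case A is essentially immediate from the formula once one observes that the defining property of $k \in \{0, 1\}$ is precisely the $t$-independence of the angular component.
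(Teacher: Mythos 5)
The central step in your argument is the assertion that ``the restriction of $\rho$ to orbit $i$ coincides (in the normalized coordinates of Subsection~\ref{subsec:2nd}) with $a_{k_i}$.'' This is not what the second conjugation gives you. After Subsection~\ref{subsec:2nd}, all one knows on a $2$-dimensional orbit $S^1 \times (a_i,b_i)$ is that there is a homeomorphism $h_i(\alpha,r)=(\alpha + g_i(r), r)$, with $g_i:[a_i,b_i]\to S^1$ a \emph{completely arbitrary} continuous map satisfying $g_i(a_i)=g_i(b_i)=0$, such that $h_i$ conjugates $\rho$ to the normalized $a_0$; the degree $k_i$ only records the homotopy class of $g_i$ rel boundary, not its actual values. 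The angular coordinate of $\rho(f)(\alpha_0,r)$ is therefore $f(\alpha_0 + g_i(r)) - g_i(r')$ for an appropriate $r'$, not the expression $\alpha_0 + (1-k_i)\phi(\tilde\alpha_0 - k_i t) + k_i\phi(\tilde\alpha_0 - (k_i-1)t)$ you derived from $a_{k_i}$. One can certainly conjugate each orbit individually so that $g_i$ becomes $t\mapsto k_i t$, but those individual conjugations do not, a priori, assemble into a single homeomorphism of $\mathbb{D}^2$; controlling that gluing is exactly what the final conjugation step does, and it requires the finiteness you are trying to prove. So as written the argument is circular. Once $g_i$ is not linear, the bump function $\phi$ you build — calibrated to the three specific points $\tilde\alpha_0$, $\tilde\alpha_0 - k t^*$, $\tilde\alpha_0 - (k-1)t^*$ — has no reason to see the relevant values $\alpha_0 + g_i(r)$ and $\alpha_0 + g_i(r) + t$, and both Cases A and B break down. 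The paper sidesteps this by working with the intrinsic parametrization $\phi:\theta\mapsto \Fix(G_0\cap G_\theta)$ of the one-dimensional $G_0$-orbits, which is defined directly from $\rho$ rather than from a chosen conjugacy, together with closedness of $\Fix(\rho(h))$; this makes the argument manifestly independent of the choice of $g_i$.

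There are also secondary problems in Case B even granting the $a_{k_i}$ form: you pass to a subsequence of \emph{even} $k_i$, which need not exist (the degrees might all be odd); and Weyl equidistribution governs $\{n\epsilon\}_{n\in\mathbb{N}}$, not an arbitrary subsequence $\{k_i\epsilon\}$, so it does not directly yield a subsequence along which $\{k_i\epsilon\}$ stays away from $0$. Both are patchable (e.g.\ by a Borel--Cantelli choice of $\epsilon$), but the main issue above is structural and would need a genuinely different argument — essentially the conjugation-invariant one the paper uses.
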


We will use the notation $G_\theta$ for the stabilizer of $\theta \in S^1 = \R/\Z$.  There are two 1-dimensional orbits of $G_0$ in $\PConf_2(S^1)$.  
For the $a_0$ action, as in Claim \ref{claim:conj}, these orbits are $\{(-r,r) \mid r\in (0,1)\}$ and $\{(0,r)\mid r\in (0,1)\}$.   Thus, the 1-dimensional orbits of $\rho(G_0)$ on an irreducible annulus $S^1\times (a,b)$ are the sets $\{h(0,r) \mid  r\in (0,1)\}$ and $\{ h(n_{a,b}(-r,r)) \mid r \in (0,1)\}$.

\begin{proof}[Proof of Lemma \ref{lem:2nd}] 
Suppose for contradiction that there are infinitely many 2-dimensional orbits $U_k = S^1 \times (a_k, b_k)$, indexed by $k \in \mathbb{N}$, of degree different from $0$ or $1$ inside a compact sub-annulus of $\DD^2 - O$.   Without loss of generality, we assume they all have degree $>1$ (the case where the degree is negative is similar) and assume that $a_k$ converges monotonically to some $r \in (0,1)$.  

For a fixed degree $d$ orbit $U = S^1 \times (a, b)$, the 1-dimensional orbits of $\rho(G_0)$ in $U$ are the sets 
\[
\{h\circ n_{a,b}(-r,r)|r\in (0,1)\}=\{(g(t),t)|t\in (a,b)\} \]
and\[
\{h\circ n_{a,b}(-r,r)|r\in (0,1)\}=\{(g(t)-\tfrac{t-a}{b-a},t)|t\in (a,b)\}
\]
where $g$ is a degree $d$ map, and hence the map $t \mapsto g(t)-\frac{t-a}{b-a}$ has degree $d-1 \geq 1$.  

For convenience, we now switch to working on the universal cover.   
Since $\rho(G_0)$ acts on $\DD^2 - O$ with fixed points, we may lift it to an action $\tilde{\rho}$ with fixed points on the universal cover of $\DD^2 - O$; then the 1-dimensional orbits in $U_k$ are continuous curves from $(m, a_k)$ to $(m+d, b_k)$ and from $(m, a_k)$ to $(m+d-1, b_k)$,  for $m \in \Z$.  See Figure \ref{fig:G0fig}. 

  \begin{figure}[h]
   \labellist 
  \small\hair 2pt
     \pinlabel $a_k$ at 2 25
    \pinlabel $b_k$ at 2 67
   \pinlabel $0$ at 33 12
    \pinlabel $1$ at 125 12
       \pinlabel $0$ at 255 5
    \pinlabel $1$ at 333 5
        \pinlabel $2$ at 412 5
             \pinlabel $a_1$ at 230 16
    \pinlabel $b_1$ at 230 50
                 \pinlabel $a_2$ at 230 62
    \pinlabel $b_2$ at 230 80
   \endlabellist
     \centerline{ \mbox{
 \includegraphics[width = 5in]{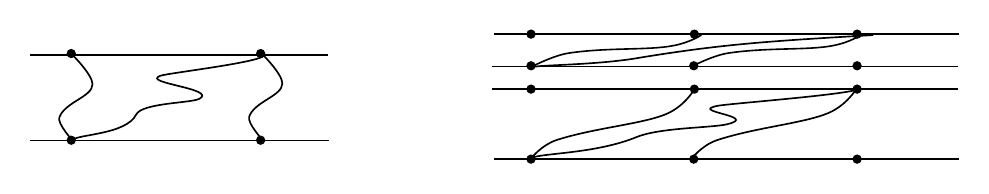}}}
 \caption{Orbits of $G_0$ in a degree 1 orbit for $\rho$ (left) and in degree 2 orbits (right). Dots represent fixed points.}
  \label{fig:G0fig}
  \end{figure}

Each $1$-dimensional orbit of $G_0$ is canonically homeomorphic with $(0,1)$  via a map $\phi: \theta \mapsto \Fix(G_0 \cap G_\theta)$.  (The map $\phi$ depends on the orbit, but for the sake of readability we suppress that notation for the time being.)
Choose $\theta \in (0,1)$ and, let $x_k \in \{\theta\} \times (a_k, b_k)$ be a point contained in a 1-dimensional orbit of $G_0$. Let $\theta_k = \phi^{-1}(x_k)$.   Pass to a subsequence so that $\theta_k$ converges to some $\theta_\infty \in S^1$.   If $\theta_\infty \neq \theta$, then we may take $h \in G_0$ such that $h(\theta) \neq \theta$, but $h(\theta_k) = \theta_k$ for all $k$ large.   But since $\Fix(\rho(h))$ is a closed set, it contains the point $(\theta, r)$, contradicting the fact that $h \notin G_\theta$.   Thus, we conclude that $\theta_\infty = \theta$, i.e. every convergent ``vertical" sequence of points $x_k = (\theta, r_k)$ contained in 1-dimensional orbits of $G_0$ corresponds, via the identifications defined by $\phi$, to a sequence converging to the first coordinate $\theta$. 
But this contradicts the existence of a sequence of orbits of degree $>1$.   
\end{proof}

\subsection{Final conjugation: normal form for 2-dimensional orbits} 
We now conjugate the map on each 2-dimensional orbit so that it agrees on each orbit with $a_0$ or $a_1$.
First, by Lemma \ref{lem:2nd}, there are only finitely may orbits of degree neither $0$ nor $1$.  Let $h_3$ be a homeomorphism of $\mathbb{D}^2$ that is supported on the union of these orbits and agrees with a power of a Dehn twist on each, conjugating the action on this orbit to the (normalized) action of $a_0$.  

Let $h_4:\mathbb{D}^2\to \mathbb{D}^2$ be a function that is the identity map outside the union of the degree 0 orbits and is the conjugation map that conjugates each degree 0 orbit action to $a_0$-action (this is the function $h$ from equation \eqref{normalform} in Section \ref{subsec:2nd}), and let $h_5:\mathbb{D}^2\to \mathbb{D}^2$ be a similar function for degree 1 orbits like $h_4$. We need to show that $h_4$ and $h_5$ are homeomorphisms; we give the details for $h_4$, the case of $h_5$ is completely analogous.     

By construction, $h_4$ is continuous on each individual 2-dimensional orbit, and extends to a continuous function on the closure of each individual orbit.   What we need to show is continuity at accumulation points of such orbits.  Suppose that $S^1 \times (a_n,b_n)$ is a sequence of $2$-dimensional orbits of degree 0 with $a_n \to r$ for some $r \neq 0$.   Let $g_n$ denote the function from equation \eqref{normalform} on $S^1 \times (a_n,b_n)$, and $\phi_n$ the identification with $(0,1)$ defined in the proof of Lemma \ref{lem:2nd} (where it was called $\phi$).  We need to show that $g_n$ converges to the constant function 0.   Recall that $c_n:=\{(g_n(t)-\frac{t-a_n}{b_n-a_n},t)|t\in (a_n,b_n)\}$ is a 1-dimensional orbit.  If $g_n$ did not converge to $0$, we could pass to a subsequence and find a sequence of points $t_n = \phi_n(\theta)$ such that $|g_n(t_n)| > \epsilon >0$ where $\phi_n$ denotes the corresponding identification $\phi_n: \theta \to \Fix(G_0\cap G_\theta)\cap c_n$.  But this contradicts the fact that $\phi_n(\theta)$ converges to $(\theta,r)$ as shown in the proof of Lemma \ref{lem:2nd}.  Thus, $h_3 \circ h_4 \circ h_5$ is a homeomorphism conjugating $\rho$ into standard form.  

\subsection{Characterization of conjugacy classes}
It remains only to show that two homomorphisms $\rho_{K,\lambda}$ and $\rho_{K',\lambda'}$ are conjugate to each other if and only if there is a homeomorphism $h:[0,1]\to [0,1]$ such that $h(K)=K'$ and $\lambda$ and $h\circ \lambda'$ agree on all but finitely many components of $[0,1]-K$ on each closed interval in the interior of $[0,1]$.  
This is proved in \cite[Proposition 2.3]{Militon}.   In brief, since $K$ and $K'$ are the union of 1-dimensional orbits, they are necessarily conjugate if the actions are.  On any fixed two-dimensional orbit, there is a unique conjugacy between the two actions by Corollary \ref{cor:irred}, and if $\lambda$ and $h\circ \lambda'$ differ on only finitely many components of $[0,c]-K$ for some $c<1$, then these conjugacies can glue together to form a continuous homeomorphism.   Conversely, if there is a conjugacy between the actions, one can identify the (necessarily finitely many) components on which they differ by looking at the image of a radial line under the conjugacy.
Full details can be found in \cite{Militon}. 
%
%


\bibliographystyle{plain}
\bibliography{biblio}

\begin{thebibliography}{10}

\bibitem{Anderson}
R.~D. Anderson.
\newblock The algebraic simplicity of certain groups of homeomorphisms.
\newblock {\em Amer. J. Math.}, 80:955--963, 1958.

\bibitem{Banyaga}
Augustin Banyaga.
\newblock {\em The structure of classical diffeomorphism groups}, volume 400 of
  {\em Mathematics and its Applications}.
\newblock Kluwer Academic Publishers Group, Dordrecht, 1997.

\bibitem{Birman}
Joan~S. Birman.
\newblock On braid groups.
\newblock {\em Comm. Pure Appl. Math.}, 22:41--72, 1969.

\bibitem{Bredon}
Glen~E. Bredon.
\newblock {\em Sheaf theory}, volume 170 of {\em Graduate Texts in
  Mathematics}.
\newblock Springer-Verlag, New York, second edition, 1997.

\bibitem{HH1}
Wu~chung Hsiang and Wu~yi~Hsiang.
\newblock Differentiable actions of compact connected classical groups i.
\newblock {\em American Journal of Mathematics}, 89(3):705--786, 1967.

\bibitem{EK}
Robert~D. Edwards and Robion~C. Kirby.
\newblock Deformations of spaces of imbeddings.
\newblock {\em Ann. Math. (2)}, 93:63--88, 1971.

\bibitem{Epstein}
D.~B.~A. Epstein.
\newblock The simplicity of certain groups of homeomorphisms.
\newblock {\em Compositio Math.}, 22:165--173, 1970.

\bibitem{Primer}
Benson Farb and Dan Margalit.
\newblock {\em A primer on mapping class groups}, volume~49 of {\em Princeton
  Mathematical Series}.
\newblock Princeton University Press, Princeton, NJ, 2012.

\bibitem{Filipkiewicz}
R.~P. Filipkiewicz.
\newblock Isomorphisms between diffeomorphism groups.
\newblock {\em Ergodic Theory Dynamical Systems}, 2(2):159--171 (1983), 1982.

\bibitem{Ghys}
\'{E}tienne Ghys.
\newblock Prolongements des diff\'{e}omorphismes de la sph\`ere.
\newblock {\em Enseign. Math. (2)}, 37(1-2):45--59, 1991.

\bibitem{GM}
James Giblin and Vladimir Markovic.
\newblock Classification of continuously transitive circle groups.
\newblock {\em Geom. Topol.}, 10:1319--1346, 2006.

\bibitem{Hamstrom2}
Mary-Elizabeth Hamstrom.
\newblock The space of homeomorphisms on a torus.
\newblock {\em Illinois J. Math.}, 9:59--65, 1965.

\bibitem{Hamstrom}
Mary-Elizabeth Hamstrom.
\newblock Homotopy groups of the space of homeomorphisms on a {$2$}-manifold.
\newblock {\em Illinois J. Math.}, 10:563--573, 1966.

\bibitem{Hurtado}
Sebastian Hurtado.
\newblock Continuity of discrete homomorphisms of diffeomorphism groups.
\newblock {\em Geom. Topol.}, 19(4):2117--2154, 2015.

\bibitem{Kneser}
Hellmuth Kneser.
\newblock Die {D}eformationss\"{a}tze der einfach zusammenh\"{a}ngenden
  {F}l\"{a}chen.
\newblock {\em Math. Z.}, 25(1):362--372, 1926.

\bibitem{kobayashi}
Shoshichi Kobayashi.
\newblock {\em Transformation groups in differential geometry}.
\newblock Classics in Mathematics. Springer-Verlag, Berlin, 1995.
\newblock Reprint of the 1972 edition.

\bibitem{Mann_ETDS}
Kathryn Mann.
\newblock Homomorphisms between diffeomorphism groups.
\newblock {\em Ergodic Theory Dynam. Systems}, 35(1):192--214, 2015.

\bibitem{Mann}
Kathryn Mann.
\newblock Automatic continuity for homeomorphism groups and applications.
\newblock {\em Geom. Topol.}, 20(5):3033--3056, 2016.
\newblock With an appendix by F. Le Roux and Mann.

\bibitem{KB}
Kathryn Mann and Bena Tshishiku.
\newblock Realization problems for diffeomorphism groups.
\newblock {\em To appear in Proc. 2017 Georgia International Topology
  conference}, 2018.
\newblock Pre-print: https://arxiv.org/abs/1802.00490.

\bibitem{Mather1}
John~N. Mather.
\newblock Commutators of diffeomorphisms.
\newblock {\em Comment. Math. Helv.}, 49:512--528, 1974.

\bibitem{Mather2}
John~N. Mather.
\newblock Commutators of diffeomorphisms. {II}.
\newblock {\em Comment. Math. Helv.}, 50:33--40, 1975.

\bibitem{Militon}
Emmanuel Militon.
\newblock Actions of the group of homeomorphisms of the circle on surfaces.
\newblock {\em Fund. Math.}, 233(2):143--172, 2016.

\bibitem{montgomerysamelson}
Deane Montgomery and Hans Samelson.
\newblock Transformation groups of spheres.
\newblock {\em Ann. of Math. (2)}, 44:454--470, 1943.

\bibitem{MZ}
Deane Montgomery and Leo Zippin.
\newblock {\em Topological transformation groups}.
\newblock Interscience Publishers, New York-London, 1955.

\bibitem{torusbraid}
L.~Paris and D.~Rolfsen.
\newblock Geometric subgroups of surface braid groups.
\newblock {\em Ann. Inst. Fourier (Grenoble)}, 49(2):417--472, 1999.

\bibitem{Rosendal}
Christian Rosendal.
\newblock Automatic continuity in homeomorphism groups of compact 2-manifolds.
\newblock {\em Israel J. Math.}, 166:349--367, 2008.

\bibitem{RS}
Christian Rosendal and S\l~awomir Solecki.
\newblock Automatic continuity of homomorphisms and fixed points on metric
  compacta.
\newblock {\em Israel J. Math.}, 162:349--371, 2007.

\bibitem{Rubin}
Matatyahu Rubin.
\newblock On the reconstruction of topological spaces from their groups of
  homeomorphisms.
\newblock {\em Trans. Amer. Math. Soc.}, 312(2):487--538, 1989.

\bibitem{Stark}
C.~W. Stark.
\newblock Blowup and fixed points.
\newblock In {\em Geometry and topology in dynamics ({W}inston-{S}alem, {NC},
  1998/{S}an {A}ntonio, {TX}, 1999)}, volume 246 of {\em Contemp. Math.}, pages
  239--252. Amer. Math. Soc., Providence, RI, 1999.

\bibitem{Thurston}
William Thurston.
\newblock Foliations and groups of diffeomorphisms.
\newblock {\em Bull. Amer. Math. Soc.}, 80:304--307, 1974.

\bibitem{Cernasvskii}
A.~V. \v{C}ernavski\u{\i}.
\newblock Local contractibility of the group of homeomorphisms of a manifold.
\newblock {\em Mat. Sb. (N.S.)}, 79 (121):307--356, 1969.

\bibitem{Whittaker}
James~V. Whittaker.
\newblock On isomorphic groups and homeomorphic spaces.
\newblock {\em Ann. of Math. (2)}, 78:74--91, 1963.

\bibitem{Wilder}
Raymond~Louis Wilder.
\newblock {\em Topology of manifolds}, volume~32 of {\em American Mathematical
  Society Colloquium Publications}.
\newblock American Mathematical Society, Providence, R.I., 1979.
\newblock Reprint of 1963 edition.

\bibitem{Witte}
Dave Witte.
\newblock Arithmetic groups of higher {${\bf Q}$}-rank cannot act on
  {$1$}-manifolds.
\newblock {\em Proc. Amer. Math. Soc.}, 122(2):333--340, 1994.

\end{thebibliography}

\end{document}